\theoremstyle{plain}
\newtheorem{thm}{Theorem}[section]
\newtheorem{prop}[thm]{Proposition}
\newtheorem{lem}[thm]{Lemma}
\newtheorem{cor}[thm]{Corollary}
\newtheorem{claim}[thm]{Claim}
\newtheorem*{TermConj}{Termination of Flips Conjecture}
\theoremstyle{definition}
\newtheorem{dfn}[thm]{Definition}
\newtheorem{rem}[thm]{Remark}
\newcommand{\Z}{\mathbb{Z}}
\newcommand{\Q}{\mathbb{Q}}
\newcommand{\Qq}{\mathbb{Q}}
\newcommand{\R}{\mathbb{R}}
\newcommand{\Rr}{\mathbb{R}}
\newcommand{\C}{\mathbb{C}}
\newcommand{\dto}{\dashrightarrow}
\newcommand{\xto}{\xrightarrow{f}}
\newcommand{\lf}{\lfloor}
\newcommand{\rf}{\rfloor}
\DeclareMathOperator{\codim}{codim}
\DeclareMathOperator{\Exc}{Exc}
\DeclareMathOperator{\mult}{mult}
\DeclareMathOperator{\Supp}{Supp}
\DeclareMathOperator{\nklt}{nklt}
\DeclareMathOperator{\Center}{center}
\numberwithin{equation}{section}
\begin{document}
	
	\title[On the Termination of Flips]{On the Termination of Flips for Log Canonical Generalized Pairs}
	
	\author{Guodu Chen and Nikolaos Tsakanikas}
	
	\address{Institute for Theoretical Sciences, Westlake Institute for Advanced Study, Westlake University, Hangzhou, Zhejiang, 310024, China}
	\email{chenguodu@westlake.edu.cn}
	
	\address{Fachrichtung Mathematik, Campus, E2.4, Universit\"at des Saarlandes, 66123 Saarbr\"ucken, Germany}
	\email{tsakanikas@math.uni-sb.de}
	
	\thanks{2010 
		\emph{Mathematics Subject Classification}: 14E30.
		\newline
		\indent 
		\emph{Keywords}: Minimal Model Program, generalized pairs, termination of flips, special termination, weak Zariski decompositions.
	}

	\begin{abstract}
		We prove the termination of flips for pseudo-effective NQC log canonical generalized pairs of dimension $4$. As main ingredients, we verify the termination of flips for $3$-dimensional NQC log canonical generalized pairs, and show that the termination of flips for pseudo-effective NQC log canonical generalized pairs which admit NQC weak Zariski decompositions follows from the termination of flips in lower dimensions.
	\end{abstract}
	
	\maketitle
	\begingroup
	\hypersetup{linkcolor=black}
	\setcounter{tocdepth}{1}
	\tableofcontents
	\endgroup

	\section{Introduction}
	\label{section:Intro}
	
	Throughout the paper we work over the field $\C$ of complex numbers. See Section \ref{section:prelim} for notation and terminology.
	
	The concept of a generalized pair (g-pair for short) was introduced by Birkar and Zhang in \cite{BZ16} and generalizes the notion of a usual pair. Roughly speaking, a g-pair is a couple of the form $(X,B+M)$, where $X$ is a normal projective variety, $B$ is an effective $\R$-divisor on $X$ and the $\R$-divisor $M$ has certain positivity properties, such that $K_X+B+M$ is $\R$-Cartier. Generalized pairs underlie many of the latest developments in birational geometry, the majority of which is outlined in \cite{Bir20a}.
	It is worthwhile to mention here that the applications of g-pairs range from the celebrated solution of the Borisov-Alexeev-Borisov conjecture \cite{Bir21} and the boundedness of complements conjecture \cite{Bir19,HLS19,Chen20,ChenXue20} to Fujita's spectrum conjecture \cite{HanLi20,HanLi21,Li21} and the so-called connectedness principle of Shokurov--Koll{\'a}r \cite{HH19,Bir20b,FS20}. For even further applications of g-pairs we refer to \cite{Liu18,HanLiu19,HanLiu20,Has20}, just to name a few.
	
	In fact, not only have g-pairs been implemented successfully in a wide range of contexts as is apparent from the above, but also several recent papers, such as \cite{Mor18,HanLi,LT19,HM20}, indicate that it is actually essential to understand their birational geometry, even if one is only interested in studying the birational geometry of varieties and investigating the main open problems in the Minimal Model Program (MMP) for usual pairs. Consequently, it is very natural to formulate and address problems regarding usual pairs in the setting of g-pairs. In doing so, one usually works with so-called \emph{NQC} g-pairs, i.e., g-pairs $ (X,B+M) $ with data $ X' \to X \to Z $ and $ M' $ such that the divisor $M'$ on $X'$ is a non-negative linear combination of $\Q$-Cartier divisors on $X'$ which are nef over $ Z $, since they behave well in the MMP \cite{HanLi}.
	
	In this paper we deal with the termination of flips conjecture for NQC lc g-pairs, cf.\ \cite[Conjecture 3.4]{HanLi}.
	
	\begin{TermConj}
		Let $(X/Z,B+M)$ be an NQC lc g-pair. Then any sequence of flips over $Z$ starting from $(X/Z,B+M)$ terminates.
	\end{TermConj}
	
	We recall briefly the overall progress towards the solution of this conjecture in the context of usual pairs. In dimension $ 3 $, the termination of flips was established for terminal varieties in \cite{Sho85} and for lc pairs in \cite{Kaw92,Sho96}. In higher dimensions, and even in dimension $ 4 $, the termination of flips is an extremely difficult problem. Even though it was settled for terminal $ 4 $-folds already in \cite{KMM87}, the termination of flips for a large class of $ 4 $-dimensional klt pairs was proved approximately $ 20 $ years later in \cite{AHK07} and \cite{Bir10b}, while the case of $ 4 $-dimensional canonical pairs with rational coefficients was treated in \cite{Fuj04,Fuj05}. Moreover, it was shown in \cite{Sho04} that the termination of flips (which is a global statement) follows from two other conjectures (of local nature) concerning singularities of pairs, the \emph{ascending chain condition (ACC) for minimal log discrepancies} and the \emph{lower semi-continuity (LSC) for minimal log discrepancies}.
	
	Finally, we recall that the so-called termination of flips with scaling for $4$-dimensional pairs with $\Q$-factorial and (at worst) dlt singularities was established in \cite{Sho09,Bir09,Bir10a}.
	
	\medskip
	
	The main goal of this paper is to confirm the above conjecture in the pseudo-effective case in dimension $ 4 $.
	
	\begin{thm}\label{thm:terminationforfourfolds}
		Let $(X/Z,B+M)$ be a $4$-dimensional NQC lc g-pair. If $K_X+B+M$ is pseudo-effective over $Z$, then any sequence of flips over $Z$ starting from $(X/Z,B+M)$ terminates.
	\end{thm}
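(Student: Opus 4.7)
The plan is to follow the two-step reduction announced in the abstract. The two main ingredients obtained in the paper are: (A) termination of flips for $3$-dimensional NQC lc g-pairs; and (B) that termination of flips for a pseudo-effective NQC lc g-pair admitting an NQC weak Zariski decomposition reduces to termination of flips in strictly lower dimension. To deduce Theorem~\ref{thm:terminationforfourfolds} from these, the remaining input is the existence of an NQC weak Zariski decomposition for our $4$-dimensional pseudo-effective NQC lc g-pair $(X/Z,B+M)$.

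First, I would pass to a $\Q$-factorial dlt modification $\pi\colon X'\to X$ of $(X/Z,B+M)$, which exists in the generalized setting and preserves the NQC property. Next, I would appeal to the existence of NQC weak Zariski decompositions for pseudo-effective NQC lc g-pairs in dimension $\leq 4$: concretely, one runs a $(K_{X'}+B'+M)$-MMP with scaling of an ample divisor on $X'$ over $Z$ and invokes the available termination with scaling to produce, on some birational model, an expression of $K_{X'}+B'+M$ as a sum of a nef and an effective $\R$-divisor, both NQC. Pulling back along the resulting birational contraction yields the desired decomposition on $X'$. With this in hand, ingredient (B) reduces termination for $(X/Z,B+M)$ to termination of flips in dimension $\leq 3$, and ingredient (A) closes the argument.

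The main obstacle is ensuring the availability of the NQC weak Zariski decomposition. Running the auxiliary MMP in dimension $4$ requires delicate control of the $b$-nef part $M$ and its behavior under flips, together with special termination in the NQC lc generalized setting; without the NQC hypothesis, the nef part of the sought decomposition could fail to descend consistently on a common resolution. Once this input is established, feeding the decomposition into the reduction theorem (B) is comparatively clean, and the induction base supplied by (A) completes the proof. A secondary subtlety is making sure, when applying (B), that the flips in the sequence starting from $(X/Z,B+M)$ correspond to flips on a higher model where the decomposition is realized, so that the reduction mechanism—controlling the effective part $N$ and pushing the flipping locus into $\Supp N$—can be carried out.
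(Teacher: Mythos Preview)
Your proposal is correct and follows essentially the same route as the paper: obtain an NQC weak Zariski decomposition for $(X/Z,B+M)$ (the paper does this by citing \cite[Theorem~E]{LT19} together with the existence of minimal models for smooth $4$-folds \cite[Theorem~5-1-15]{KMM87} and \cite[Proposition~5.1]{HanLi}), and then apply Theorem~\ref{thm:mainconsequence_g} together with Theorem~\ref{thm:terminationforthreefolds}. Note that the preliminary dlt modification and the ``secondary subtlety'' you raise are already absorbed into the proof of Theorem~\ref{thm:mainconsequence_g} (via Corollary~\ref{cor:maincor} and Lemma~\ref{lem:supplement_g}), so no separate treatment is required.
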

	
	For the proof of the above theorem we proceed as follows. First, by utilizing several ideas from \cite{Kol92,Kaw92,Sho96}, we give a detailed proof of the termination of flips conjecture in dimension $ 3 $.
	
	\begin{thm}\label{thm:terminationforthreefolds}
		Let $(X/Z,B+M)$ be a $3$-dimensional NQC lc g-pair. Then any sequence of flips over $Z$ starting from $(X/Z,B+M)$ terminates.
	\end{thm}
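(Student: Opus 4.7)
My approach follows the classical two-step strategy of Kawamata--Shokurov for the termination of flips in dimension $3$, adapted to the NQC lc generalized setting: first reduce to the klt case via special termination, then conclude by means of a Shokurov-type integer-valued difficulty function.

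For the first step, I would establish special termination for NQC lc g-pairs in dimension $3$: given any sequence of flips $\phi_i : (X_i/Z, B_i + M_i) \dto (X_{i+1}/Z, B_{i+1} + M_{i+1})$ over $Z$, after finitely many indices each $\phi_i$ is an isomorphism in a neighborhood of $\nklt(X_i, B_i + M_i)$. The argument proceeds by induction on dimension via generalized adjunction: the restriction of $(X_i, B_i + M_i)$ to (the normalization of) any log canonical center is again an NQC lc g-pair, of dimension at most $2$. Since there are no small birational contractions on normal surfaces, no flips exist in dimension $\le 2$, and hence the induced sequences on lc centers must stabilize immediately; this lifts to show that the flips $\phi_i$ eventually avoid $\nklt(X_i,B_i+M_i)$. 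After discarding finitely many initial steps, I may then reduce to the case where each $(X_i, B_i + M_i)$ is klt, by perturbing $B$ in a neighborhood of the non-klt locus using an NQC representation of $M$.

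For the klt case I would mimic the difficulty argument of Kawamata and Shokurov. Consider
\[
d(X, B+M) \;=\; \#\bigl\{E : E \text{ is a prime divisor over } X \text{ with } a(E, X, B+M) < 1\bigr\},
\]
or a suitable integer-valued refinement thereof. The key points to verify are: (i) $d(X, B+M)$ is a finite non-negative integer in the klt g-pair setting, using that only finitely many divisors can have small log discrepancy on a sufficiently high model on which the nef part descends; (ii) $d$ is non-increasing along each flip, since log discrepancies are monotone non-decreasing under an MMP step for g-pairs; and (iii) $d$ strictly decreases at every flip, since each component of the flipping locus is dominated by an exceptional divisor of log discrepancy less than $1$, and such a divisor acquires log discrepancy at least $1$ on the flipped model by strict monotonicity in the flipping locus. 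Since $d$ takes values in $\N$, this forces the sequence of flips to terminate.

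The main technical obstacle will be establishing the monotonicity and, especially, the strict decrease of $d$ in the generalized pair setting. Unlike the case of usual pairs, for a g-pair one must work on a birational model on which the nef part $M$ descends as a nef $\Q$-divisor, and the NQC hypothesis is precisely what guarantees the existence of such models, the availability of the boundary perturbations used in the reduction to klt, and the requisite finiteness properties of log discrepancies. A further subtlety appears at the special termination step, where one has to check that generalized adjunction produces genuine NQC lc g-pairs of lower dimension and is compatible with the flipping contractions.
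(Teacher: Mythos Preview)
Your reduction from lc to klt via special termination is broadly in line with the paper (which invokes \cite[Lemma 5.2]{LMT} for exactly this step), but your treatment of the klt case contains a genuine gap.

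The claim that your difficulty $d(X,B+M)=\#\{E: a_\ell(E,X,B+M)<1\}$ \emph{strictly} decreases at every flip is not correct. You assert that each flipping curve is dominated by an exceptional divisor with log discrepancy $<1$, and that this divisor acquires log discrepancy $\ge 1$ after the flip. Neither statement holds in general. If the g-pair happens to be terminal, then $d=0$ identically and there are no such divisors at all, yet terminal flips certainly occur. Even in the genuinely klt (non-terminal) case, strict monotonicity of log discrepancies across a flip only guarantees a strict \emph{increase} for valuations centered in the flipping or flipped locus; it does not force any such log discrepancy to cross the threshold $1$. So $d$ can remain constant through infinitely many flips, and the argument stalls.

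The paper avoids this by a two-layer reduction. For \emph{terminal} g-pairs it uses a refined difficulty (Definition~\ref{dfn:diff}) indexed by the finite set $\mathcal{S}_b$ of non-negative integer combinations of the coefficients of $B$ and $M'$, together with an induction on the number of components of $B$; the point is that the blow-up of a flipped curve on a terminal $3$-fold produces a divisor with log discrepancy exactly $2-\xi$ for some $\xi\in\mathcal{S}_b$, which forces a drop in the appropriate $d_\xi$. For \emph{klt} g-pairs, rather than attempting a direct difficulty argument, the paper lifts the sequence of flips to a $\Q$-factorial terminalization (Lemma~\ref{lem:lift_klt_term}) and reduces to the terminal case. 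The real work here is Propositions~\ref{prop:bddci} and~\ref{prop:finitelogdiscrepancy}, which bound the Cartier index and confine the log discrepancies $\le 1$ to a fixed finite set, ensuring that the terminalizations along the sequence are eventually isomorphic in codimension one with compatible boundaries. Your proposal is missing precisely this mechanism.
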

	
	Next, by following the strategy of \cite{Bir07} and by invoking the \emph{special termination for NQC $\Q$-factorial dlt g-pairs} and the \emph{ACC for lc thresholds for g-pairs}, we extend Birkar's termination result \cite[Theorem 1.3]{Bir07} to the context of g-pairs.
	
	\begin{thm} \label{thm:mainconsequence_g}
		Assume the termination of flips for $\Q$-factorial NQC klt g-pairs of dimension at most $ d-1 $.
		
		Let $ (X/Z,B+M) $ be a pseudo-effective NQC lc g-pair of dimension $ d $. If $ (X/Z,B+M) $ admits an NQC weak Zariski decomposition over $Z$, 
		then any sequence of flips over $Z$ starting from $ (X/Z,B+M) $ terminates.
	\end{thm}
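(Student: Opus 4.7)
The plan is to adapt Birkar's strategy from \cite[Theorem 1.3]{Bir07} to the NQC generalized setting, using special termination for NQC lc g-pairs \cite[Theorem 1.2]{LMT} and the ACC for lc thresholds for g-pairs \cite[Theorem 1.5]{BZ16} as the key replacements for their classical analogues.

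The first step is to reduce to the $\Q$-factorial klt case. Let $(X_0/Z,B_0+M_0)\dto(X_1/Z,B_1+M_1)\dto\cdots$ be an infinite sequence of flips. By \cite[Theorem 1.2]{LMT}, after discarding finitely many terms each flip is an isomorphism on a neighborhood of $\nklt(X_i,B_i+M_i)$. Hence the reduced part of $B_i$ is irrelevant for the continuing flips, and by a standard perturbation that lowers the coefficients equal to one we may replace $(X/Z,B+M)$ by a $\Q$-factorial NQC klt g-pair whose termination question is equivalent to the original one.

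Next I would exploit the NQC weak Zariski decomposition $K_X+B+M\sim_\R P+N$ over $Z$, with $P$ NQC nef and $N\geq 0$. Passing to a common resolution and pushing down, each $X_i$ inherits an analogous decomposition $K_{X_i}+B_i+M_i\sim_\R P_i+N_i$. Let $\lambda_i:=\lct(X_i,B_i+M_i;N_i)$ be the lc threshold of $N_i$ with respect to the g-pair $(X_i/Z,B_i+M_i)$. Since the $N_i$ are the pushforwards of one fixed divisor on the common resolution, a negativity-lemma comparison forces $\lambda_i$ to be monotone in $i$, while \cite[Theorem 1.5]{BZ16} confines the $\lambda_i$ to a set satisfying the ACC. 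Therefore $\lambda_i$ stabilizes at some $\lambda_\infty>0$ after finitely many steps, and we may assume this from the start.

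Consider then the modified g-pair $(X_i/Z,B_i+\lambda_\infty N_i+M_i)$, which is NQC lc but not klt, and whose flipping sequence agrees with the original. Special termination \cite[Theorem 1.2]{LMT} applied to this modified g-pair forces the flipping loci to avoid $\nklt(X_i,B_i+\lambda_\infty N_i+M_i)$ after finitely many further steps; before that happens, any flip that meets a coefficient-one component $S$ of $\lambda_\infty N_i$ restricts, via adjunction for generalized pairs, to a flip of the induced NQC g-pair structure on $S$, which has dimension at most $d-1$ and which, after the usual perturbation, is $\Q$-factorial NQC klt. The hypothesised termination in dimensions $\leq d-1$ then produces the desired contradiction. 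The main obstacle is exactly this bookkeeping: one must verify that $\lambda_i$ is genuinely monotone under flips of a \emph{generalized} pair (which requires the weak Zariski decomposition to push forward compatibly through each flip), that adjunction for g-pairs to a component $S$ of $N_i$ preserves the NQC property of the moduli part, and that the restricted sequence on $S$ constitutes a bona fide sequence of flips of an NQC $\Q$-factorial klt g-pair, so that the inductive hypothesis is applicable.
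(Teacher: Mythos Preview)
Your proposal assembles the correct ingredients---special termination via \cite[Theorem~1.2]{LMT}, the ACC for lc thresholds \cite[Theorem~1.5]{BZ16}, and the weak Zariski decomposition---but the logical architecture is inverted and the argument does not close.

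The central gap is in how you deploy the ACC. You argue that the thresholds $\lambda_i$ are monotone, hence stabilise at some $\lambda_\infty$, and then apply special termination to the modified g-pair $\big(X_i, (B_i + \lambda_\infty N_i) + M_i\big)$. But special termination only says that eventually the flipping loci avoid the non-klt locus of this modified pair; it does \emph{not} terminate the sequence. After that point you still have an infinite sequence of flips, and since the lc centres of the $\lambda_\infty$-modified pair persist (they are merely disjoint from the flipping loci), the thresholds $\lambda_i$ remain equal to $\lambda_\infty$ forever. Your final sentence claims that termination in dimension $\leq d-1$ ``produces the desired contradiction'', but via \cite[Theorem~1.2]{LMT} that hypothesis yields only special termination, which you have already used; no contradiction has been reached.

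The paper, following Birkar, uses the ACC in the opposite direction: one manufactures a \emph{strictly increasing} sequence $t_1 < t_2 < \cdots$ of lc thresholds, which the ACC forbids. The mechanism for the strict increase is precisely the step you are missing. After special termination for the $t_1$-modified pair, one passes to a dlt blow-up and shows (Lemma~\ref{lem:supplement_g}) that the lifted sequence of flips is simultaneously a sequence of flips for a $\Q$-factorial \emph{klt} g-pair $\big(Y_1, \Delta_1^{<1} + \Xi_1\big)$ and a sequence of $(Q_1 + \Delta_1'')$-flips, where $\Delta_1^{<1} = \Delta_1' + t_1 \Delta_1''$ and $\Xi_1 = L_1 + t_1 Q_1$. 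Because $\big(Y_1, \Delta_1^{<1} + \Xi_1\big)$ is genuinely klt rather than merely lc, the lc threshold $t_2$ of $Q_1 + \Delta_1''$ with respect to $(Y_1, \Delta_1' + L_1)$ is \emph{strictly} larger than $t_1$. Iterating produces the ACC violation. Your outline never changes the ambient g-pair, so the threshold has no reason to jump.

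A secondary issue: your first-step reduction to the $\Q$-factorial klt case by ``a standard perturbation that lowers the coefficients equal to one'' is not justified. The variety $X$ need not be $\Q$-factorial, and simply lowering coefficients does not produce a g-pair for which the same maps are flips. The paper handles this not by perturbation but by the dlt blow-up combined with Lemma~\ref{lem:supplement_g}, which is exactly the device that also drives the strict increase above.
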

	
	We refer to \cite{HM20} for the rational coefficients case of the above theorem. Besides, it is also worthwhile to mention that the paper \cite{HanLi} proved the termination of any MMP with scaling of an ample divisor with respect to any NQC $\Q$-factorial dlt g-pair which admits an NQC weak Zariski decomposition, assuming the existence of NQC weak Zariski decompositions in lower dimensions. Note that this result was refined in \cite{LT19}, namely, it was shown that the previous statement holds assuming instead the existence of minimal models for smooth varieties in lower dimensions. In particular, we conclude that if $(X/Z,B+M)$ is a $5$-dimensional NQC $\Q$-factorial dlt g-pair which admits an NQC weak Zariski decomposition over $Z$, then any $(K_X+B+M)$-MMP with scaling of an ample divisor over $Z$ terminates.
	
	In conclusion, Theorem \ref{thm:terminationforfourfolds} is an immediate consequence of Theorems \ref{thm:terminationforthreefolds} and \ref{thm:mainconsequence_g}. Indeed, the existence of NQC weak Zariski decompositions for $4$-dimensional pseudo-effective NQC lc g-pairs follows from \cite[Theorem E]{LT19} and from the existence of minimal models for smooth 4-folds \cite[Theorem 5-1-15]{KMM87}.
	
	In particular, we deduce the termination of flips for pseudo-effective lc pairs of dimension $ 4 $ with real coefficients as a special case of Theorem \ref{thm:terminationforfourfolds}. The rational coefficients case of the aforementioned termination was announced previously by Moraga in (the first and second version of his preprint) \cite{Mor18}. However, our approach is significantly different from Moraga's, as he builds on ideas from \cite{AHK07} and uses crucially a version of the ACC for minimal log discrepancies for g-pairs in dimension $ 2 $ which is still unknown.
	
	Last but not least, the conclusion of Theorem \ref{thm:mainconsequence_g} can be interpreted roughly as follows: for pseudo-effective g-pairs, if \emph{some} MMP terminates, then \emph{every} MMP terminates. On the other hand, the termination of flips conjecture for non-pseudo-effective g-pairs remains unresolved, even in dimension $ 4 $. It is essentially the only obstacle that one needs to overcome in order to derive the termination of flips just from the existence of minimal models and Mori fiber spaces. We hope to address this problem in a future work.
	
	\medskip
	
	\textit{Structure of the paper}: We outline the organization of the paper. In Section \ref{section:prelim} we recall various definitions and prove certain basic results. In Section \ref{section:term_in_dim_3} we prove Theorem \ref{thm:terminationforthreefolds}, while in Section \ref{section:term_for_psef} we prove Theorems \ref{thm:terminationforfourfolds} and \ref{thm:mainconsequence_g}.
	
	\medskip
	
	\textit{Postscript}: After we posted this preprint on the arXiv we were informed by Joaqu\'in Moraga that in the third version of his preprint \cite{Mor18} he has proved Theorem \ref{thm:terminationforfourfolds} independently using similar methods.

	\medskip
	
	\textit{Acknowledgements}:
	Part of this work was done while G.C.\ visited Chen Jiang at Fudan University, he would like to thank their hospitality. G.C.\ would like to thank his supervisor, Chenyang Xu, for his constant support and suggestions. N.T.\ would like to thank his supervisor, Vladimir Lazi\'c, for guidance and encouragement as well as for numerous valuable discussions and comments. Special thanks go to Jingjun Han for many helpful discussions related to this work, for encouragement and lecture notes. We would also like thank Chen Jiang, Jihao Liu, Yuchen Liu, Yujie Luo, Qingyuan Xue and Sokratis Zikas for useful discussions. Finally, we are grateful to the referees for many valuable comments and suggestions.

	\section{Preliminaries}
	\label{section:prelim}
	
	For an $\R$-divisor $D = \sum d_i D_i$, where the $D_i$ are distinct prime divisors, we set
	$$ 
	D^{<1} := \sum\limits_{i \, : \, d_i < 1} d_i D_i \quad \text{and} \quad 
	D^{=1} := \sum\limits_{i \, : \, d_i = 1} D_i . $$
	The notation $ D \in \Gamma $ means that the coefficients $ d_i $ of $ D $ belong to a set $ \Gamma $.
	
	Throughout the paper we assume that varieties are normal and quasi-projective and that a variety $X$ over a variety $Z$, denoted by $ X/Z $, is projective over $Z$.
	
	Let $ \pi \colon X \to Z $ be a projective morphism of normal varieties and let $ D, D_1, D_2 $ be $ \R $-Cartier $ \R $-divisors on $ X $. We say that $ D $ is \emph{pseudo-effective over $ Z $} if it is pseudo-effective on a very general fiber of $ \pi $. Moreover, $ D_1 $ and $ D_2 $ are said to be \emph{numerically equivalent over $ Z $}, denoted by $ D_1 \equiv_Z D_2 $, if $ D_1\cdot C = D_2 \cdot C $ for any curve $ C $ contained in a fiber of $ \pi $. Finally, we say that an $ \R $-divisor $ P $ on $ X $ is \emph{NQC} if it is a non-negative linear combination of $\Q$-Cartier $\Q$-divisors on $X$ which are nef over $ Z $, see \cite[\S 2.4]{HanLi}; the acronym NQC stands for \emph{nef $\Q$-Cartier combinations}.
	
	\medskip
	
	We recall the notion of an NQC weak Zariski decomposition, which plays a significant role in the paper, and we refer to \cite[Section 2]{HanLi} and \cite[Section 2]{LT19} for further information.
	
	\begin{dfn}\label{dfn:wzd}
		Let $  X \to Z $ be a projective morphism of normal quasi-projective varieties and let $ D $ be an $ \R $-Cartier $\R$-divisor on $ X $. An \emph{NQC weak Zariski decomposition over $Z$} of $ D $ consists of a projective birational morphism $ f \colon W \to X $ from a normal variety $ W $ and a numerical equivalence $ f^* D \equiv_Z P + N $ such that $ P $ is an NQC divisor on $ W $ and $ N $ is an effective $\R$-Cartier $\R$-divisor on $W$. 
	\end{dfn}

	\subsection{Generalized pairs}
	
	For the definitions and the basic results on the singularities of pairs and the Minimal Model Program we refer to \cite{KM98,Kol13,Fuj17}. Here, we discuss very briefly analogous concepts for generalized pairs, while for further details we refer to \cite[Section 4]{BZ16} and \cite[Sections 2 and 3]{HanLi}. 
	
	\begin{dfn}
		A \emph{generalized pair}, abbreviated as \emph{g-pair}, consists of 
		\begin{itemize}
			\item a normal variety $ X $, equipped with projective morphisms 
			$$ X' \overset{f}{\longrightarrow} X \longrightarrow Z , $$ 
			where $ f $ is birational and $ X' $ is a normal variety,
			
			\item an effective $ \R $-divisor $ B $ on $X$, and
			
			\item an $\R$-Cartier $\R$-divisor $ M' $ on $X'$ which is nef over $Z$,
		\end{itemize}
		such that $ K_X + B + M $ is $ \R $-Cartier, where $ M := f_* M' $.
		
		We usually refer to a g-pair as above by saying that $ (X,B+M) $ is a g-pair with data $ X' \overset{f}{\to} X \to Z $ and $ M' $. However, we often denote a g-pair simply by $(X/Z,B+M)$, but remember the whole g-pair structure. Note also that the definition is flexible with respect to $X'$ and $M'$; namely, if $ g \colon W \to X' $ is a projective birational morphism from a normal variety $W$, then we may replace $X'$ with $W$ and $M'$ with $ g^*M'$. Therefore, $X'$ may always be chosen as a sufficiently high birational model of $ X $.
	\end{dfn}
	
	Given a g-pair $ (X,B+M) $ with data $ X' \overset{f}{\to} X \to Z $ and $ M' $, we say that:
	\begin{enumerate}
		\item[(a)] it is an \emph{NQC g-pair} if $M'$ is an NQC divisor on $ X' $,
		
		\item[(b)] it is \emph{pseudo-effective over $Z$} if the divisor $ K_X + B + M $ is pseudo-effective over $Z$, and 
		
		\item[(c)] it \emph{admits an NQC weak Zariski decomposition over $Z$} if the divisor $ K_X + B + M $ admits an NQC weak Zariski decomposition over $Z$.
	\end{enumerate}
	
	\begin{dfn}
		Let $ (X,B+M) $ be a g-pair with data $ X' \overset{f}{\to} X \to Z $ and $ M' $. We may write 
		$$ K_{X'} + B' + M' \sim_\R f^* ( K_X + B + M ) $$
		for some $ \R $-divisor $ B' $ on $ X' $.
		Let $ E $ be a divisorial valuation over $ X $. We may assume that $ E $ is a prime divisor on $ X' $. The \emph{log discrepancy of $ E $} with respect to $ (X,B+M) $ is defined as 
		$$ a(E,X,B+M) :=1-\mult_E B' . $$
		
		The g-pair $ (X,B+M) $ is called:
		\begin{enumerate}
			\item[(a)] \emph{terminal} if $  a(E,X,B+M) > 1 $ for any exceptional divisorial valuation $ E $ over $ X $,
			
			\item[(b)] \emph{klt} if $  a(E,X,B+M) > 0 $ for any divisorial valuation $ E $ over $ X $, and
			
			\item[(c)] \emph{lc} if $  a(E,X,B+M) \geq 0 $ for any divisorial valuation $ E $ over $ X $.
		\end{enumerate}
	\end{dfn}
	
	In the notation from the above definition, we remark that if $ f $ is a log resolution of $ (X,B) $, then the g-pair $ (X,B+M) $ is klt (resp.\ lc) if and only if the coefficients of $ B' $ are $ <1 $ (resp.\ $ \leq 1 $). Additionally, if $ (X,B+M) $ is terminal (resp.\ klt, lc) and $ X $ is $ \Q $-factorial, then $ (X,B) $ is terminal (resp.\ klt, lc) by \cite[Remark 4.2(3)]{BZ16}; in particular, if $ (X,B+M) $ is a $ \Q $-factorial terminal g-pair, then $ \lfloor B \rfloor = 0 $ and $ X $ is smooth in codimension two by \cite[Theorem 4.5 and Corollary 5.18]{KM98}.
	
	\begin{dfn}
		Let $ (X,B+M) $ be an lc g-pair with data $ X' \overset{f}{\to} X \to Z $ and $ M' $. Let $ P' $ be an $ \R $-divisor on $ X' $ which is nef over $ Z $ and let $ N $ be an effective $ \R $-divisor on $ X $ such that $ P+N $ is $ \R $-Cartier, where $ P := f_* P' $. \emph{The lc threshold of $ P+N $ with respect to $ (X,B+M) $} is defined as
		$$ \sup \left\{ t \geq 0 \mid \big(X, (B+tN) + (M+tP) \big) \text{ is lc} \right\} . $$
	\end{dfn}
	
	Note that the lc threshold as defined above might be $+\infty$: this happens when $N=0$ and $P' = f^* P$, see \cite[Section 4]{BZ16}.
	
	\begin{dfn}
		A g-pair $ (X,B+M) $ is called \emph{dlt} if it is lc, if there exists a closed subset $V \subseteq X$ such that the pair $(X \setminus V,B|_{X \setminus V})$ is log smooth, and if $ a(E,X,B+M) = 0$ for some divisorial valuation $ E $ over $X$, then $\Center_X E \nsubseteq V $ and $ \Center_X E \setminus V $ is an lc center of $(X \setminus V,B|_{X \setminus V})$.
	\end{dfn}
	
	Note that we adopted \cite[Definition 2.2]{HanLi} as our definition of dlt singularities, cf.\ \cite[\S 2.13(2)]{Bir19}, \cite[Definition 2.5]{Has20}.
	
	\medskip
	
	Let $ (X,B+M) $ be an lc g-pair. A subvariety $ S $ of $ X $ is called an \emph{lc center} of $ X $ if there exists a divisorial valuation $ E $ over $ X $ such that $  a(E,X,B+M) = 0 $ and $ \Center_X E = S $. The union of all lc centers of $ X $ is denoted by $ \nklt(X,B+M) $ and is called the \emph{non-klt locus} of $ (X,B+M) $.
	
	In the following remark we gather some properties of the non-klt locus of a g-pair that will be used in the proof of Lemma \ref{lem:supplement_g}.
	
	\begin{rem} ~
		\begin{enumerate}[(1)]
			\item If $ (X,B+M) $ is a $ \Q $-factorial dlt g-pair, then by definition and by \cite[Remark 4.2(3)]{BZ16} the underlying pair $ (X,B) $ is also $ \Q $-factorial dlt and the lc centers of $ (X,B+M) $ coincide with those of $ (X,B) $. In particular, by \cite[Theorem 4.16]{Kol13} we obtain
			\[ \nklt(X,B+M) = \nklt(X,B) = \Supp \lfloor B \rfloor . \]
			
			\item If $ (X,B+M) $ is an lc g-pair and if $ h \colon (Y,\Delta+\Xi) \to (X,B+M) $ is a dlt blow-up of $ (X,B+M) $ (see Lemma \ref{lem:modifications}(ii)), then by (1) we obtain
			\[ \nklt(X,B+M) = h \big(\nklt(Y,\Delta+\Xi)\big) = h \big( \Supp \lfloor \Delta \rfloor \big) . \]
		\end{enumerate}
	\end{rem}
	
	We prove below analogs of \cite[Corollary 2.35(1), Proposition 2.36(2) and Lemma 2.29]{KM98}, respectively, in the context of g-pairs.
	
	\begin{lem}\label{lem:singularities_g}
		Let $\big(X,(B+N)+(M+P)\big)$ be a $ \Q $-factorial klt (resp.\ lc) g-pair with data $ X' \overset{f}{\to} X \to Z $ and $ M' + P' $. Then the g-pair $(X,B+M)$ is also klt (resp.\ lc).
	\end{lem}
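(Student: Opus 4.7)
The plan is to compare log discrepancies of the two g-pairs valuation by valuation, and show that removing the effective boundary $N$ and the extra nef part $P'$ can only increase them. First, note that $\Q$-factoriality of $X$ guarantees that $K_X+B+M$ is $\R$-Cartier, so $(X,B+M)$ is at least well-defined as a g-pair (with nef part $M'$ on $X'$). Fix an arbitrary divisorial valuation $E$ over $X$; by the flexibility of the g-pair data, we may replace $X'$ by a higher model and assume $E$ is a prime divisor on $X'$, and that both log discrepancies are computed on this single model.

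Unwinding the defining $\R$-linear equivalences
\[
K_{X'}+B'+M'\sim_\R f^*(K_X+B+M) \quad\text{and}\quad K_{X'}+\tilde B'+M'+P'\sim_\R f^*\bigl(K_X+(B+N)+(M+P)\bigr),
\]
and subtracting, one obtains
\[
\tilde B'-B'\sim_\R f^*N+\bigl(f^*P-P'\bigr),
\]
so that (taking $\mult_E$, which is insensitive to principal divisors once $E$ is fixed)
\[
a_\ell(E,X,B+M)-a_\ell\bigl(E,X,(B+N)+(M+P)\bigr)=\mult_E f^*N+\mult_E\bigl(f^*P-P'\bigr).
\]
Routine computation will verify this identity; the two terms must then be shown to be nonnegative.

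The first term is harmless: $N\geq 0$ forces $\mult_E f^*N\geq 0$. The main (and only non-trivial) step is showing $f^*P-P'\geq 0$ as a Weil divisor, which is where the negativity lemma enters. Set $D:=f^*P-P'$, an $f$-exceptional divisor since $f_*D=P-P=0$. Any curve $C$ contracted by $f$ is contained in a fiber of $X'\to Z$, so the assumption that $P'$ is nef over $Z$ gives $P'\cdot C\geq 0$, while $f^*P\cdot C=0$ by the projection formula; hence $-D$ is $f$-nef. The negativity lemma (\cite[Lemma 3.39]{KM98}, extended verbatim to the $\R$-Cartier setting) with $f_*D=0$ then yields $D\geq 0$, as required. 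Note that this is the only place in the argument where the assumption that $P'$ is nef is used, and it is indispensable.

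Combining these two inequalities gives $a_\ell(E,X,B+M)\geq a_\ell(E,X,(B+N)+(M+P))$ for every $E$. Since $E$ was arbitrary, if the right-hand side is $\geq 0$ (resp.\ $>0$) for all $E$, so is the left-hand side, proving that $(X,B+M)$ is lc (resp.\ klt). The hard part is setting up the negativity-lemma step correctly; the rest is bookkeeping with the definition of log discrepancies for g-pairs.
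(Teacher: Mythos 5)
Your proposal is correct and follows essentially the same route as the paper: the paper also reduces everything to the Negativity lemma applied to $f^*P - P'$ (yielding an effective $f$-exceptional divisor $E_1$) together with the effectivity of $f^*N = f_*^{-1}N + E_2$, and then reads off the klt/lc condition from the coefficients of $B'$ versus $B' + f_*^{-1}N + E_1 + E_2$ on a single log resolution of $(X,B+N)$. Your valuation-by-valuation phrasing is just a repackaging of the same computation, and your identification of where $\Q$-factoriality and the nefness of $P'$ enter matches the paper exactly.
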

	
	\begin{proof}
		We may assume that $ f $ is a log resolution of $(X,B+N)$ and we may write
		$$ K_{X'} + B' + M' \sim_\R f^*(K_X + B + M) $$
		for some $ \R $-divisor $ B' $ on $ X' $. We may also write 
		$$ f^* P = P' + E_1  \ \text{ and } \ f^* N = f_*^{-1} N + E_2 , $$
		where $ E_1 $ is an effective $ f $-exceptional $\R$-divisor by the Negativity lemma \cite[Lemma 3.39]{KM98}, as $ P' $ is $ f $-nef, and $ E_2 $ is an effective $ f $-exceptional $\R$-divisor, as $ N \geq 0 $. Therefore,
		$$ K_{X'} + (B' + f_*^{-1} N + E_1 + E_2) + (M' + P') \sim_\R f^* \big(K_X + (B + N) + (M + P) \big) . $$
		Since $\big(X,(B+N)+(M+P)\big)$ is klt (resp.\ lc), the coefficients of the divisor 
		$ B' + f_*^{-1} N + E_1 + E_2 $ are $<1$ (resp.\ $\le1$), whence the statement.
	\end{proof}
	
	\begin{prop} \label{prop:klt_discrep}
		Let $(X,B+M)$ be a klt g-pair with data $X' \xto X\to Z$ and $M'$. Then there exists a positive real number $\varepsilon$ such that there are only finitely many exceptional divisorial valuations $E$ over $ X $ such that $ a(E,X,B+M)<1+\varepsilon$.
		
		In particular, if $(X,B+M)$ is terminal, then we may take $\epsilon:=1-\max_i{b_i}$, where $b_i$ are the coefficients of $B$.
	\end{prop}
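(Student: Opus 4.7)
The plan is to reduce the statement to the analogous classical result for usual klt pairs, namely \cite[Lemma 2.29]{KM98}. Using the flexibility of the birational model $X'$, I would first replace $X'$ by a log resolution of $(X,B)$; then $X'$ is smooth and the support of $B'$ is snc. Since $(X,B+M)$ is klt, every coefficient of $B'$ is strictly less than $1$, and since $B'$ has only finitely many components, these coefficients are uniformly bounded above by $1-\delta$ for some $\delta > 0$. In particular, the usual pair $(X',B')$ is klt.

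The key observation is that, for every divisorial valuation $E$ over $X$,
$$ a_\ell(E, X, B+M) = a_\ell(E, X', B') . $$
Indeed, choose a projective birational morphism $\pi \colon X'' \to X'$ from a smooth variety on which $E$ appears as a prime divisor. Pulling back the identity $K_{X'}+B'+M' \sim_\R f^*(K_X+B+M)$ by $\pi$ and using that $M'' = \pi^* M'$ by definition of the g-pair log discrepancy, one obtains $K_{X''}+B'' \sim_\R \pi^*(K_{X'}+B')$. Hence $\mult_E(B'')$, which by definition computes both log discrepancies, is the same on both sides, proving the identity.

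Next, every exceptional divisorial valuation $E$ over $X$ falls into exactly one of two classes: (a) $E$ is an $f$-exceptional prime divisor already appearing on $X'$, of which there are only finitely many; or (b) $E$ is exceptional over $X'$ in addition to being exceptional over $X$. For case (b), applying \cite[Lemma 2.29]{KM98} to the usual klt pair $(X',B')$ yields an $\varepsilon > 0$ and a finite set of exceptional divisorial valuations over $X'$ outside which $a_\ell(E, X', B') \geq 1+\varepsilon$. Combining the finite set from (a) with this finite set from (b), and invoking the log discrepancy identity above, gives that only finitely many exceptional divisorial valuations $E$ over $X$ satisfy $a_\ell(E, X, B+M) < 1+\varepsilon$.

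The main obstacle is the log discrepancy identity; however, it follows easily from the fact that the nef part $M'$ is pulled back identically to any higher birational model and therefore contributes nothing to the discrepancy computation. Everything else is a direct appeal to the classical statement for usual klt pairs, so no additional difficulty is expected.
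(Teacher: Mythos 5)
Your argument is correct and follows essentially the same route as the paper's: both reduce the computation to the trace $(X',B')$ on a log resolution via the identity $a_\ell(E,X,B+M)=a_\ell(E,X',B')$ (the nef part pulls back and cancels) and then invoke the classical bound for the resulting sub-klt pair on a smooth variety. The only differences are cosmetic --- the paper chooses the resolution so that $\Supp (B')^+$ is smooth and reads off an explicit $\varepsilon$ from \cite[Corollary 2.31(3)]{KM98}, whereas you quote the classical finiteness statement as a black box; note that the correct reference for that statement is \cite[Proposition 2.36(2)]{KM98}, not Lemma 2.29, which computes discrepancies of blow-ups.
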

	
	\begin{proof}
		We may assume that $f$ is a log resolution of $(X,B)$ such that if we write
		$$ K_{X'} + B' + M' \sim_\R f^*(K_X+B+M) $$
		for some $ \R $-divisor $ B' = (B')^+ - (B')^- $ on $ X' $, then $ \Supp (B')^+ $ is smooth, where the divisors $ (B')^+ $ and $ (B')^- $ are effective and have no common components, cf.\ \cite[Proposition 2.36(1)]{KM98}. Set $\varepsilon := 1-\max \{ \alpha \in \mathcal{C} \}$, where $\mathcal{C}$ is the set of coefficients of $(B')^+$, and note that $ \varepsilon > 0 $, since $ (X,B+M) $ is klt. If $F$ is an exceptional divisorial valuation over $X'$, then by \cite[Corollary 2.31(3)]{KM98} we have
		\begin{align*}
			a(F,X,B+M) &= a(F,X',B'+M') = a(F,X',B') \\ 
			&\geq a \big( F,X',(B')^+ \big) \geq 1+\varepsilon .
		\end{align*}
		This yields the statement.
	\end{proof}
	
	\begin{lem} \label{lem:blup}
		Let $ (X,B+M) $ be an NQC g-pair with data $X' \xto X\to Z$ and $ M'=\sum_{j=1}^l \mu_j M_j'$, where $B=\sum_{i=1}^s b_i B_i$ and the $B_i$ are distinct prime divisors, $\mu_j \ge 0$ and the $M_j'$ are $ \Q $-Cartier divisors which are nef over $Z$. Assume that $ X $ is smooth at the generic point of a codimension $k\ge2$ closed subvariety $V$ of $X$. Consider the blow-up of $ X $ along $ V $ and let $ E $ be the irreducible component of the exceptional divisor that dominates $ V $. Then
		$$ a(E,X,B+M)=k-\sum_{i=1}^s n_ib_i-\sum_{j=1}^l m_j\mu_j$$
		for some non-negative integers $n_1,\dots,n_s,m_1,\dots, m_l$.
	\end{lem}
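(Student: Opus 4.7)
The plan is to pass to a sufficiently high birational model on which $E$ appears as a prime divisor, use standard multiplicity formulas near the smooth locus of $X$ to extract the log discrepancy, and then apply the Negativity lemma to secure the non-negativity of the coefficients $m_j$.

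First, I would exploit the flexibility of the g-pair data (the remark following the definition of a g-pair) to replace $X'$ with any sufficiently high birational model; in particular, I may assume that $f$ factors as $X' \overset{g}{\to} Y \overset{\pi}{\to} X$, where $\pi$ is the blow-up of $X$ along $V$ and $g$ is an isomorphism in a neighborhood of the generic point of (the strict transform of) $E$. Since $X$ is smooth near $V$, the local ring $\mathcal{O}_{X,\eta_V}$ at the generic point $\eta_V$ of $V$ is regular of dimension $k$, and the local computation at $\eta_V$ yields the discrepancy identity $a(E,X,0)=k-1$. Similarly, since each $B_i$ is a Weil divisor on $X$ and $M_j := f_* M_j'$ is Cartier near $V$ (as $X$ is smooth there), I would define the integers $n_i := \mult_V B_i = \mult_E f^*B_i \in \Z_{\geq 0}$ and $\ell_j := \mult_V M_j = \mult_E f^* M_j \in \Z$.

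Second, applying $\mult_E$ to the defining identity $K_{X'}+B'+M' \sim_\R f^*(K_X+B+M)$ and using the previous step, together with the reduction (by rescaling) to the case where each $M_j'$ is Cartier, so that $\mult_E M_j'$ is an integer, a short bookkeeping yields
$$ \mult_E B' = -(k-1) + \sum_{i=1}^s n_i b_i + \sum_{j=1}^l m_j \mu_j, $$
where $m_j := \ell_j - \mult_E M_j' \in \Z$. Hence $a_\ell(E,X,B+M) = 1 - \mult_E B' = k - \sum_i n_i b_i - \sum_j m_j \mu_j$, which is the claimed formula.

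The main obstacle, and the only place where the nefness of the $M_j'$ is essential, is the non-negativity of the integers $m_j$. For this, I would observe that $m_j = \mult_E(f^* M_j - M_j')$ and invoke the Negativity lemma \cite[Lemma 3.39]{KM98} applied to $D_j := f^* M_j - M_j'$ on $X'$. Since $f_* D_j = M_j - f_*M_j' = 0$, the divisor $D_j$ is $f$-exceptional; moreover, $-D_j \equiv_X M_j'$ is nef over $X$, because $M_j'$ is nef over $Z$ and any curve contracted by $f$ is automatically contracted by $X' \to Z$. The Negativity lemma then gives $D_j \geq 0$, hence $m_j = \mult_E D_j \geq 0$, completing the argument.
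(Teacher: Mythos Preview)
Your proposal is correct and follows essentially the same approach as the paper: compute the boundary contribution via the standard local blow-up formula (which the paper cites as \cite[Lemma 2.29]{KM98} and you unpack explicitly), and handle the nef part $M'$ via the Negativity lemma applied to each $f^*M_j - M_j'$, exactly as the paper does. Your rescaling step to make each $M_j'$ Cartier is harmless but in fact unnecessary here, since the $M_j'$ are already integral Weil divisors on $X'$ (hence $\mult_E M_j' \in \Z$) and $M_j$ is Cartier near $V$ by smoothness.
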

	
	\begin{proof}
		We may assume that $E$ is a prime divisor on $X'$. By assumption and by \cite[Lemma 2.29]{KM98} we have $ a(E,X,B)=k-\sum_{i=1}^s n_ib_i$ for some non-negative integers $n_1,\dots,n_s$; note that $ n_i = \mult_V B_i $. Furthermore, by assumption and by the Negativity lemma \cite[Lemma 3.39(1)]{KM98}, we have
		$\mult_E (f^* M - M')=\sum_{j=1}^l m_j\mu_j$ for some non-negative integers $m_1,\dots,m_l$. Since
		$  a(E,X,B+M) =  a(E,X,B)-\mult_E (f^* M - M') $,
		the statement follows.
	\end{proof}
	
	We will also need the following lemma. Note that parts (i) and (ii) are analogs of \cite[Proposition 3.51 and Lemma 3.38]{KM98}, respectively, in the context of g-pairs, and their proofs are essentially identical as well, namely they follow readily from the Negativity lemma \cite[Lemma 3.39(1)]{KM98}.
	
	\begin{lem} \label{lem:monotonicity_discrep}
		Let $(X,B+M)$ and $(X',B'+M')$ be g-pairs such that there exists a diagram
		\begin{center}
			\begin{tikzcd}
				& W \arrow[dl, "g" swap] \arrow[dr, "g'"] \\
				X \arrow[rr, "\varphi", dashed] \arrow[dr, "f" swap] && X' \arrow[dl, "f'"] \\
				& Y ,
			\end{tikzcd}
		\end{center}
		where $Y$ and $W$ are normal varieties, all morphisms are proper birational, $ K_{X'} + B' + M' $ is $ f' $-nef, and there exists a nef $ \R $-Cartier $ \R $-divisor $ M_W $ on $ W $ with $ g_* M_W = M $ and $ g'_* M_W = M' $.
		\begin{enumerate}
			\item Assume that $B'=\varphi_*B+E$, where $E$ is the sum of all the $\varphi^{-1}$-exceptional prime divisors on $ X' $, and that
			$$a(F,X,B+M)\leq a(F,X',B'+M')$$ 
			for every $\varphi$-exceptional prime divisor $F$ on $X$. Then for any divisorial valuation $ F $ over $ X $ we have
			\[ a(F,X,B+M) \leq a(F,X',B'+M') . \]
			
			\item Assume that $ -(K_X + B + M) $ is $ f $-nef and that $ f_* B \geq f'_* B' $. Then for any divisorial valuation $ F $ over $ Y $ we have
			\[ a(F,X,B+M) \leq a(F,X',B'+M') , \]
			and the strict inequality holds if either 
			\begin{enumerate}
				\item $ -(K_X + B + M) $ is $ f $-ample and $ f $ is not an isomorphism above the generic point of $ c_Y(F)  $, or
				
				\item $ K_{X'} + B' + M' $ is $ f' $-ample and $ f' $ is not an isomorphism above the generic point of $ c_Y(F) $.
			\end{enumerate}
		\end{enumerate}
	\end{lem}
	
	\begin{rem}
		The operations of an MMP for a g-pair are analogous to those in the standard setting, see \cite[Section 4]{BZ16} and \cite[\S3.1]{HanLi} for the details as well as Subsection \ref{subsection:flips} for more information about flips in this context. Note that $\Q$-factorial and dlt singularities are preserved under these operations by \cite[Lemma 3.7]{HanLi}.
		
		Furthermore, by \cite[Remark 2.3 and Lemma 3.5]{HanLi} one can run an MMP with scaling of an ample divisor over $ Z $ for any lc g-pair $(X/Z,B+M)$ such that $ (X,0) $ is $ \Q $-factorial klt, and thus for any $\Q$-factorial dlt g-pair $(X/Z,B+M)$.
	\end{rem}
	
	The following result will be used frequently in the paper. Part (i) concerns the existence of a $\Q$-factorial terminalization of a klt g-pair and we give a detailed proof below. Part (ii) is \cite[Proposition 3.9]{HanLi} and we therefore omit its proof.
	
	\begin{lem}
		\label{lem:modifications}
		Let $(X,B+M)$ be a g-pair with data $ X' \overset{f}{\to} X \to Z $ and $ M' $. 
		\begin{enumerate}
			\item If $ (X,B+M) $ is klt, then, after possibly replacing $X'$ with a higher model, there exists a $\Q$-factorial terminal g-pair $(Y,\Delta+\Xi)$ with data $X' \overset{g}{\to} Y\to Z$ and $M'$, and a projective birational morphism $h \colon Y \to X$ such that 
			$ K_Y+\Delta+\Xi \sim_\R h^*(K_X+B+M) $. 
			The g-pair $(Y,\Delta+\Xi)$ is called a \emph{$ \Q $-factorial terminalization} of $(X,B +M)$.
			
			\item If $ (X,B+M) $ is lc, then, after possibly replacing $X'$ with a higher model, there exist a $\Q$-factorial dlt g-pair $(Y,\Delta+\Xi)$ with data $ X' \overset{g}{\to} Y \to Z $ and $ M' $, and a projective birational morphism $h \colon Y \to X$ such that 
			$K_Y + \Delta + \Xi \sim_\R h^*(K_X + B +M) $ and each $ h $-exceptional divisor is a component of $ \lfloor \Delta \rfloor $.
			The g-pair $(Y,\Delta+\Xi)$ is called a \emph{dlt blow-up} of $(X,B+M)$.
		\end{enumerate}
	\end{lem}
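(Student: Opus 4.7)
\textit{Proof plan.} I would construct the terminalization by running a suitable MMP on a sufficiently high log resolution of $(X,B)$. By Proposition \ref{prop:klt_discrep}, there exist only finitely many exceptional divisorial valuations $E$ over $X$ with $a_\ell(E,X,B+M) \le 1$; so after replacing $X'$ by a higher birational model, I may assume that $f\colon X' \to X$ is a log resolution of $(X,B)$ and that every such valuation $E$ is realized as a prime divisor on $X'$. Writing $K_{X'}+B'+M' \sim_\R f^*(K_X+B+M)$, the klt hypothesis gives $B' < 1$. Let $F_1,\ldots,F_r$ be the $f$-exceptional primes and put $a_i := a_\ell(F_i,X,B+M) > 0$.

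Fix a small $\delta \in (0,1)$ and define
$$ \Gamma := f_*^{-1} B + \sum_{i:\, a_i \le 1}(1-a_i) F_i + \delta \sum_{j:\, a_j > 1} F_j . $$
All coefficients of $\Gamma$ lie in $[0,1)$, so $(X',\Gamma+M')$ is a $\Q$-factorial klt g-pair. A direct computation yields
$$ \Gamma - B' = \sum_{j:\, a_j > 1}(a_j - 1 + \delta) F_j \ge 0 , $$
an effective $f$-exceptional divisor supported exactly on the exceptional divisors of log discrepancy $>1$, and hence $K_{X'}+\Gamma+M' \sim_{\R, f} \Gamma - B'$. I would then run a $(K_{X'}+\Gamma+M')$-MMP over $X$ with scaling of an ample divisor, which terminates in a relative minimal model $g\colon X' \dashrightarrow Y$ over $X$, inducing $h \colon Y \to X$, by the MMP for $\Q$-factorial klt g-pairs (cf.\ \cite{BZ16,HanLi}). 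At each divisorial step, the extremal ray $R$ satisfies $(K+\Gamma+M')\cdot R = (\Gamma - B')\cdot R < 0$, forcing some $F_j$ with $a_j > 1$ to have negative intersection with $R$; since $\Q$-factoriality implies that a divisorial extremal contraction contracts the unique divisor with negative intersection, the contracted divisor must lie in $\Supp(\Gamma-B') = \{F_j : a_j > 1\}$. By the Negativity lemma applied to the $h$-nef divisor $K_Y + g_*\Gamma + g_*M' \sim_{\R,h} g_*(\Gamma-B')$, we have $g_*(\Gamma-B') = 0$, so $g$ contracts precisely the $F_j$ with $a_j > 1$.

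Setting $\Delta := g_*\Gamma$ and $\Xi := g_*M'$, the pair $(Y,\Delta+\Xi)$ is $\Q$-factorial klt with $K_Y+\Delta+\Xi \sim_\R h^*(K_X+B+M)$. For terminality, given any divisorial valuation $E$ exceptional over $Y$: if $E$ is a prime on $X'$, then $E$ is $f$-exceptional and contracted by $g$, hence $E = F_j$ for some $a_j > 1$; if $E$ is not a prime on $X'$, then by our choice of $X'$ (realizing every valuation with log discrepancy $\le 1$) we have $a_\ell(E,X,B+M) > 1$. By crepancy, $a_\ell(E,Y,\Delta+\Xi) > 1$ in both cases, so $(Y,\Delta+\Xi)$ is terminal. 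The main obstacle is the termination of the auxiliary MMP together with the fact that each divisorial step only contracts divisors in $\Supp(\Gamma-B')$; the former requires a BCHM-type result for $\Q$-factorial klt g-pairs in the relative setting with $f$-effective boundary, which is available from the current g-pair MMP literature.
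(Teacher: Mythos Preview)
Your proof is correct and follows essentially the same route as the paper: take a log resolution on which all exceptional valuations of log discrepancy $\le 1$ appear, run a $(K_{X'}+\Gamma+M')$-MMP over $X$ (terminating by \cite[Lemma 4.4(2)]{BZ16} and \cite[Proposition 3.8]{HanLi}) to contract precisely the exceptional divisors of log discrepancy $>1$, and conclude terminality by crepancy. The only difference is cosmetic: the paper takes $\delta = 0$, writing $K_{X'}+B'+M' \sim_\R f^*(K_X+B+M)+F'$ with $B'$ the effective part and $F'$ the $f$-exceptional remainder, so your extra $\delta$-term is harmless but unnecessary.
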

	
	\begin{proof}
		We show (i). By Proposition \ref{prop:klt_discrep} there are only finitely many exceptional divisorial valuations $E_1, \dots, E_k$ over $ X $ with $ a(E_j,X,B+M) \in (0,1]$ for any $ 1 \leq j \leq k $. We may assume that $f$ is a log resolution of $(X,B)$ such that each $ E_j $ is a prime divisor on $X'$. Thus, we may write
		$$ K_{X'}+B'+M' \sim_\R f^*(K_X+B+M) + F' , $$
		where $ B' $ is an effective $ \R $-divisor supported on $ f_*^{-1} B $ and the $ f $-exceptional divisors $ E_1, \dots, E_k $, and $F'$ is an effective $ f $-exceptional $ \R $-divisor that has no common components with $ B' $. In particular, $ (X',B'+M') $ is a $ \Q $-factorial klt g-pair such that $K_{X'}+B'+M'\equiv_X F'$. Now, by \cite[Lemma 4.4(2)]{BZ16} and by \cite[Proposition 3.8]{HanLi} we may run a $(K_{X'}+B'+M')$-MMP with scaling of an ample divisor over $ X $ which contracts only $F'$ and terminates with a model $h \colon Y \to X$ such that $K_Y+\Delta+\Xi \sim_\R h^*(K_X+B+M)$, where $\Delta$ is the strict transform of $B'$ on $ Y $ and $ \Xi $ is a pushforward of $M'$. Moreover, by construction the g-pair $(Y,\Delta+\Xi)$ is in fact $ \Q $-factorial and terminal, since the $ h $-exceptional divisors are the strict transforms of the $ E_j $ on $ Y $. This finishes the proof.
	\end{proof}

	\subsection{Minimal and log canonical models of generalized pairs}
	
	In this subsection we first recall the definitions of minimal models and log canonical models of g-pairs and then we briefly discuss some useful properties.
	
	\begin{dfn} \label{dfn:models}
		Let $(X,B +M)$ be an lc g-pair with data $ X' \overset{f}{\to} X \to Z $ and $ M' $. Let $ \varphi \colon X \dashrightarrow Y$ be a \emph{birational contraction} to a normal variety $ Y $ over $Z$, i.e., a birational map whose inverse does not contract any divisors. Assume that $ X' $ is a sufficiently high birational model of $ X $ so that the induced map $ g \colon X' \dashrightarrow Y $ is a morphism and, additionally, that $ (Y/Z,B_Y+M_Y) $ is a g-pair, where $ B_Y :=\varphi_*B$ and $ M_Y:=g_* M' $. Then $ (Y/Z,B_Y+M_Y) $ is called:
		\begin{enumerate}
			\item[(a)] a \emph{minimal model of $(X,B+M)$ over $ Z $} if
			\begin{itemize}
				\item $Y$ is not necessarily $\Q$-factorial if $X$ is not $\Q$-factorial, while $Y$ is (required to be) $\Q$-factorial if $X$ is $\Q$-factorial,
				
				\item $K_Y+B_Y+M_Y$ is nef over $Z$, and 
				
				\item we have 
				$$a(F,X,B+M) < a(F,Y,B_Y+M_Y)$$
				for any $\varphi$-exceptional prime divisor $ F $ on $ X $;
			\end{itemize}
			
			\item[(b)] a \emph{log canonical model of $(X,B+M)$ over  $Z$} if 
			\begin{itemize}
				\item $K_Y+B_Y+M_Y$ is ample over $Z$, and
				
				\item we have
				$$a(F,X,B+M) \leq a(F,Y,B_Y+M_Y)$$
				for any $\varphi$-exceptional prime divisor $ F $ on $ X $.
			\end{itemize}
		\end{enumerate}
	\end{dfn}
	
	With the same notation as in the above definition, note that if the g-pair $ (Y,B_Y+M_Y) $ is either a minimal model or a log canonical model of the lc g-pair $ (X,B+M) $ over $Z$, then $ (Y,B_Y+M_Y) $ is also lc by Lemma \ref{lem:monotonicity_discrep}(i).
	
	Moreover, the next result shows that any two minimal models of a given g-pair are isomorphic in codimension one, as in the case of usual pairs, see \cite[Theorem 3.52(2)]{KM98}. Its proof is analogous to the proof of \cite[Proposition 1.21]{Kol13}. Nevertheless, we provide all the details for the benefit of the reader.
	
	\begin{lem} \label{lem:MM_iso_codim1}
		Let $ \varphi_i \colon (X/Z,B+M) \dashrightarrow (X_i^m/Z,B_i^m+M_i^m) $, $ i \in \{1,2\} $, be two minimal models of an lc g-pair $ (X/Z,B+M) $. Then the induced map $ \varphi_2 \circ \varphi_1^{-1} \colon X_1^m \dashrightarrow X_2^m $ is an isomorphism in codimension one.
	\end{lem}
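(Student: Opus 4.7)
The plan is to use the standard comparison of two minimal models on a sufficiently high common resolution via the Negativity lemma. Replacing $X'$ with a higher model, we may assume that we have projective birational morphisms $f \colon X' \to X$ and $p_i \colon X' \to X_i^m$ for $i\in\{1,2\}$. Setting $A_i := K_{X_i^m}+B_i^m+M_i^m$ and $E_i := f^*(K_X+B+M) - p_i^* A_i$, we have
$$ \mult_F E_i = a_\ell(F, X_i^m, B_i^m+M_i^m) - a_\ell(F, X, B+M) $$
for any prime divisor $F$ on $X'$. The main technical step is to show that $E_i \geq 0$, with support contained in the prime divisors $F$ of $X'$ such that $f(F)$ lies in the $\varphi_i$-exceptional locus; in particular $E_i$ is $p_i$-exceptional, so $p_{i*} E_i = 0$. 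This is the standard extension, via the Negativity lemma \cite[Lemma 3.39]{KM98} applied to the nef-over-$Z$ divisor $A_i$ on a common resolution, of the discrepancy inequality from $\varphi_i$-exceptional prime divisors of $X$ (where it is part of the minimal-model definition) to all divisorial valuations over $X$. The proof carries over from usual pairs to NQC g-pairs essentially verbatim, and this is where I expect the main technical work to lie.

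Next, set $N := p_1^* A_1 - p_2^* A_2 = E_2 - E_1$ on $X'$. Since $A_2$ is nef over $Z$ and every $p_1$-contracted curve on $X'$ lies in a fiber over $Z$, the divisor $p_2^* A_2$ is $p_1$-nef, so $-N$ is $p_1$-nef. The Negativity lemma then gives that $N$ is effective if and only if $p_{1*} N = p_{1*} E_2$ is effective, which holds since $E_2 \geq 0$; hence $N \geq 0$. The symmetric argument with $p_2$ in place of $p_1$ yields $-N \geq 0$, and therefore $N = 0$. In particular, $\mult_F E_1 = \mult_F E_2$ for every prime divisor $F$ on $X'$.

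Finally, suppose for contradiction that $\psi := \varphi_2 \circ \varphi_1^{-1}$ contracts a prime divisor $D_1 \subset X_1^m$. Since $\varphi_1^{-1}$ contracts no divisor, $D_1$ has a prime-divisor strict transform $D$ on $X$, and since $\varphi_2(D) = \psi(D_1)$ has codimension at least two, $D$ is $\varphi_2$-exceptional but not $\varphi_1$-exceptional. Letting $F$ denote the strict transform of $D$ on $X'$, the support description of $E_1$ yields $\mult_F E_1 = 0$, whereas the minimal-model definition applied to $\varphi_2$ gives $\mult_F E_2 > 0$; this contradicts the equality $\mult_F E_1 = \mult_F E_2$ obtained above. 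A symmetric argument shows that $\psi^{-1}$ also contracts no divisor, and hence $\psi$ is an isomorphism in codimension one.
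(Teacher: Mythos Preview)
Your proof is correct and follows essentially the same approach as the paper's: both pass to a high common model, form the difference $E_i$ of pullbacks, and apply the Negativity lemma twice to conclude $E_1 = E_2$, hence that $\varphi_1$ and $\varphi_2$ have the same exceptional divisors. The only cosmetic differences are that the paper cites \cite[Lemma 2.8(i)]{LMT} for the effectivity and $p_i$-exceptionality of $E_i$ (where you sketch it as the standard Negativity-lemma argument), and the paper concludes in one line from $E_1=E_2$ while you spell out the contradiction explicitly.
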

	
	\begin{proof}
		Let $ g \colon T \to X $ be a sufficiently high model so that the induced maps $ h_i \colon T \to X_i^m $ are actually morphisms and there exists an $ \R $-Cartier $ \R $-divisor $ M_T $ on $ T $ which is nef over $ Z $ and satisfies $ (h_i)_* M_T = M_i^m $, where $ i \in \{1,2\} $. Set
		$$ E_i := g^* (K_X + B +M) - h_i^* (K_{X_i^m} + B_i^m + M_i^m) , \ i \in \{1,2\} . $$
		By Lemma \ref{lem:monotonicity_discrep}(i) and by definition of a minimal model, $ E_i $ is an effective $ h_i $-exceptional $\R$-divisor and $ \Supp E_i $ contains the strict transforms of all the $ \varphi_i $-exceptional divisors. Subtracting the two formulas, we obtain
		$$ h_1^* (K_{X_1^m} + B_1^m + M_1^m) - h_2^* (K_{X_2^m} + B_2^m + M_2^m) = E_2 - E_1 . $$
		Since $ (h_1)_* (E_2 - E_1) \geq 0 $ and $ -(E_2 - E_1) $ is $ h_1 $-nef, by the Negativity lemma \cite[Lemma 3.39(1)]{KM98} we deduce that $ E_2 - E_1 \geq 0 $. Similarly, $ E_1 - E_2 \geq 0 $. Thus, $ E_1 = E_2 $, so $ \varphi_1 $ and $ \varphi_2 $ have the same exceptional divisors, whence the statement.
	\end{proof}	
	
	The following lemma illustrates how a minimal model and a log canonical model of a given g-pair are related to each other. It is an analog of \cite[Lemma 4.8.4]{Fuj17} in the context of g-pairs. Its proof is also similar, namely it follows readily from the Negativity and the Rigidity lemma; the details can be found in \cite{LMT}.
	
	\begin{lem} \label{lem:mapfromMMtoCM}
		Let $ (X/Z,B+M) $ be an lc g-pair. Let 
		$ (X^m,B^m+M^m) $ be a minimal model of $ (X,B+M) $ over $ Z $ and let $ (X^c,B^c+M^c) $ be a log canonical model of $ (X,B+M) $ over $ Z $. Then there exists a birational morphism 
		$ \alpha \colon X^m \to X^c $ such that
		\[ K_{X^m} + B^m + M^m \sim_\R \alpha^* (K_{X^c} + B^c + M^c) . \]
		In particular, $ K_{X^m} + B^m + M^m $ is semi-ample over $ Z $ and there exists a unique, up to isomorphism, log canonical model of $(X,B+M)$ over $ Z $. 
	\end{lem}

	\subsection{Flips and special termination for generalized pairs}
	\label{subsection:flips}
	
	First, we recall the definition of flips in the context of g-pairs.
	
	\begin{dfn} \label{dfn:flip}
		Let $(X/Z,B+M)$ be an lc g-pair. A \emph{$(K_X+B+M)$-flipping contraction} over $ Z $ is a projective birational morphism $ \theta \colon X \to W$ over $Z$ such that 
		\begin{itemize}
			\item $\codim_X \Exc(\theta) \geq 2$,
			
			\item $-(K_X+B+M)$ is ample over $ W $, and
			
			\item the relative Picard number is $\rho(X/W) = 1$.
		\end{itemize}
		A g-pair $(X^+/Z,B^++M^+)$ together with a projective birational morphism $\theta^+ \colon X^+ \to W$ over $Z$ is called a \emph{$(K_X+B+M)$-flip} of $\theta$ if
		\begin{itemize}
			\item $\codim_{X^+} \Exc(\theta^+) \geq 2$, and
			
			\item $K_{X^+}+ B^+ + M^+$ is ample over $W$, where $B^+$ is the strict transform of $B$ on $X^+$ and $ M^+ $ is defined as in Definition \ref{dfn:models}.
		\end{itemize}
		By a slight abuse of terminology, the induced birational map $\pi \colon X \dto X^+$ is also called a \emph{$(K_X+B+M)$-flip} over $ Z $. The map $ \theta^+ $ is called the \emph{flipped contraction} over $ Z $ and the set $ \Exc(\theta) $ (resp.\ $ \Exc(\theta^+) $) is called the \emph{flipping locus} (resp.\ the \emph{flipped locus}).
	\end{dfn}
	
	With the same notation as in the above definition, we emphasize that $ (X^+,B^+ + M^+) $ is the log canonical model of $ (X,B+M) $ over $ W $ by definition. Consequently, a flip is unique, if it exists, see Lemma \ref{lem:mapfromMMtoCM}. However, the existence of flips for g-pairs is not known yet in full generality; we refer to \cite[Section 4]{BZ16} and \cite[\S 3.1]{HanLi} for known special cases. 
	
	The next result, often referred to as the \emph{mononicity lemma}, describes the behaviour of log discrepancies under a flip and follows immediately from Lemma \ref{lem:monotonicity_discrep}(ii).
	
	\begin{lem} \label{lem:monotonicity}
		Let $ (X/Z,B+M) $ be an lc g-pair. If 
		$ \pi \colon (X,B+M) \dashrightarrow (X^+,B^+ + M^+) $
		is a $ (K_X+B+M) $-flip over $Z$, then for any divisorial valuation $E$ over $X$ we have 
		$$ a(E,X,B+M)\le  a(E,X^+,B^++M^+) , $$
		and the strict inequality holds if and only if either $\Center_{X}E$ is contained in the flipping locus or $\Center_{X^+}E$ is contained in the flipped locus.
	\end{lem}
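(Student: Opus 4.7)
The plan is to adapt the standard common-resolution Negativity-lemma argument to the setting of generalized pairs. First, I would pick a sufficiently high common log resolution $\psi \colon Y \to W$ factoring as $\psi = \theta \circ p = \theta^+ \circ q$, where $p \colon Y \to X$ and $q \colon Y \to X^+$ are projective birational morphisms. By the flexibility of the g-pair data (the remark after the definition of a g-pair), I can arrange that the nef part $M'$ descends to a single $\R$-Cartier divisor $M_Y$ on $Y$ that is nef over $Z$ and satisfies $p_* M_Y = M$ and $q_* M_Y = M^+$; I may also assume that the given divisorial valuation $E$ is realized as a prime divisor on $Y$.

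Next I would set
$$ G := p^*(K_X + B + M) - q^*(K_{X^+} + B^+ + M^+) $$
on $Y$. Since $\pi$ is an isomorphism in codimension one and $B^+$, $M^+$ are, respectively, the strict transform of $B$ and a pushforward of $M'$, the $\R$-Cartier pullback $q^*(K_{X^+}+B^++M^+)$ pushes forward under $p$ to $K_X+B+M$, and hence $\psi_* G = p_* G = 0$. The hypothesis that $-(K_X+B+M)$ is $\theta$-ample together with $K_{X^+}+B^++M^+$ being $\theta^+$-ample implies that $-p^*(K_X+B+M)$ and $q^*(K_{X^+}+B^++M^+)$ are both $\psi$-nef, so $-G$ is $\psi$-nef.

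Now I would apply the Negativity lemma \cite[Lemma 3.39]{KM98} to $\psi$ and $-G$: it yields $G \geq 0$, and, in its fiberwise strict form, that a prime divisor $E \subseteq Y$ lies in $\Supp G$ if and only if $\psi(E)$ is contained in the common image in $W$ of the flipping and flipped loci, which is equivalent to saying that $\Center_X E$ is contained in the flipping locus or $\Center_{X^+} E$ is contained in the flipped locus. Finally, comparing the two crepant relations
\begin{align*}
    K_Y + B_Y + M_Y &\sim_\R p^*(K_X + B + M), \\
    K_Y + B_Y^+ + M_Y &\sim_\R q^*(K_{X^+} + B^+ + M^+),
\end{align*}
and subtracting, we get $B_Y^+ - B_Y = G$, hence
$$ a_\ell(E, X^+, B^+ + M^+) - a_\ell(E, X, B+M) = \mult_E G \geq 0, $$
with strict inequality characterized precisely as in Step 3.

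The only delicate points are the careful bookkeeping of the nef part across the two pullbacks (ensuring that both $K_X+B+M$ and $K_{X^+}+B^++M^+$ are pulled back through a common $M_Y$, so that the contribution of the nef part cancels in $G$) and the use of the sharper fiberwise version of the Negativity lemma to pinpoint exactly when the inequality on log discrepancies is strict; modulo these bookkeeping issues the argument is a direct translation of the classical proof for usual lc pairs.
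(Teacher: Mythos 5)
Your argument is correct and complete in outline: it is the standard common-resolution Negativity Lemma proof, which is also what underlies the paper's treatment — the paper does not prove this lemma itself but quotes it directly from \cite{LMT}*{Lemma 2.8(ii)}. Two small points of bookkeeping. First, subtracting the two crepant relations gives $B_Y - B_Y^+ = G$ rather than $B_Y^+ - B_Y = G$; it is this sign that produces $a_\ell(E,X^+,B^++M^+) - a_\ell(E,X,B+M) = \mult_E G \geq 0$, so your final formula is right but the intermediate identity is reversed. Second, the ``fiberwise strict form'' of \cite{KM98}*{Lemma 3.39} literally gives only the dichotomy that each fiber $\psi^{-1}(w)$ is either contained in $\Supp G$ or disjoint from it; to conclude that a prime divisor $E$ with $\Center_X E \subseteq \Exc(\theta)$ satisfies $\mult_E G > 0$, one must still check that the fibers over $\theta(\Exc(\theta))$ actually meet $\Supp G$, which requires intersecting $G$ with a curve contracted by $\theta$ and using that $-(K_X+B+M)$ is ample (not merely nef) over $W$ while $q^*(K_{X^+}+B^++M^+)$ is $\psi$-nef, exactly as in the classical proof of \cite{KM98}*{Lemma 3.38}. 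With these two details filled in, the proof is a faithful translation of the classical argument, the only genuinely new ingredient being the one you identify: arranging a single nef part $M_Y$ on the common resolution so that its contribution cancels in $G$.
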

	
	In the remainder of this subsection we will be primarily concerned with sequences of flips. First, we recall the following:
	Let
	\begin{center}
		\begin{tikzcd}[column sep = 1.25em, row sep = 1.75em]
			(X_1,B_1+M_1) \arrow[dr, "\theta_1" swap] \arrow[rr, dashed, "\pi_1"] && (X_2,B_2+M_2) \arrow[dl, "\theta_1^+"] \arrow[dr, "\theta_2" swap] \arrow[rr, dashed, "\pi_2"] && (X_3,B_3+M_3) \arrow[dl, "\theta_2^+"] \arrow[rr, dashed, "\pi_3"] && \dots \\
			& Z_1 && Z_2
		\end{tikzcd}
	\end{center}
	be a sequence of flips over $Z$ starting from an lc g-pair $(X_1/Z,B_1+M_1)$. \emph{Special termination}\index{Special termination} of the given sequence of flips means that after finitely many steps the flipping locus $ \Exc(\theta_i) $ does not intersect the non-klt locus of $ (X_i/Z,B_i+M_i) $, i.e., there exists an integer $ k \geq 1$ such that
	$$ \Exc(\theta_i)\cap\nklt(X_i,B_i+M_i)=\emptyset \text{ for all } i \geq k . $$
	Moreover, the phrase \emph{special termination for (NQC) lc g-pairs of dimension $ d $}, which will be encountered in Section \ref{section:term_for_psef}, means that the above applies to any sequence of flips starting from any (NQC) lc g-pair of dimension $ d $.
	
	The next result constitutes one of the main ingredients for the proof of Theorem \ref{thm:gklt}. It allows us to pass from a sequence of flips for klt g-pairs to a sequence of flips for terminal g-pairs under certain conditions which can be achieved with serious effort in the setting of Theorem \ref{thm:gklt}.
	
	\begin{lem} \label{lem:lift_klt_term}
		Let $ (X_1/Z,B_1+M_1) $ be a $ \Q $-factorial klt g-pair and let 
		\begin{center}
			\begin{tikzcd}[column sep = 0.8em, row sep = 1.75em]
				(X_1,B_1+M_1) \arrow[dr] \arrow[rr, dashed, "\pi_1"] && (X_2,B_2+M_2) \arrow[dl] \arrow[dr] \arrow[rr, dashed, "\pi_2"] && (X_3,B_3+M_3) \arrow[dl] \arrow[rr, dashed, "\pi_3"] && \dots \\
				& Z_1 && Z_2
			\end{tikzcd}
		\end{center}
		be a sequence of flips over $ Z $ starting from $ (X_1/Z,B_1+M_1) $. Assume that for each $ i \geq 1 $ there exists a $ \Q $-factorial terminalization $ (Y_i', \Delta_i' + \Xi_i') $ of $ (X_i, B_i + M_i) $, such that each $Y_{i+1}'$ is isomorphic in codimension one to $ Y_i' $ and each $\Delta_{i+1}'$ is the strict transform of $\Delta_i'$. Then there exists a diagram
		\begin{center}
			\begin{tikzcd}[column sep = 0.8em, row sep = large]
				(Y_1,\Delta_1+\Xi_1) \arrow[d, "h_1" swap] \arrow[rr, dashed, "\rho_1"] && (Y_2,\Delta_2+\Xi_2) \arrow[d, "h_2" swap] \arrow[rr, dashed, "\rho_2"] && (Y_3,\Delta_3+\Xi_3) \arrow[d, "h_3" swap] \arrow[rr, dashed, "\rho_3"] && \dots \\ 
				(X_1,B_1+M_1) \arrow[dr] \arrow[rr, dashed, "\pi_1"] && (X_2,B_2+M_2) \arrow[dl] \arrow[dr] \arrow[rr, dashed, "\pi_2"] && (X_3,B_3+M_3) \arrow[dl] \arrow[rr, dashed, "\pi_3"] && \dots \\
				& Z_1 && Z_2
			\end{tikzcd}
		\end{center}
		where, for each $ i \geq 1 $, the map $ \rho_i $ is a sequence of $ (K_{Y_i} + \Delta_i + \Xi_i) $-flips over $ Z_i $ and the map $ h_i $ is a $ \Q $-factorial terminalization of $ (X_i,B_i+M_i) $. In particular, we obtain overall a sequence of flips over $ Z $ starting from the $ \Q $-factorial terminal g-pair $ (Y_1, \Delta_1 + \Xi_1) $.
	\end{lem}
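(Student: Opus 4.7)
My plan is to proceed by induction on $i$. For the base case, set $(Y_1, \Delta_1 + \Xi_1) := (Y_1', \Delta_1' + \Xi_1')$ with $h_1$ the given terminalization; then $Y_1 = Y_1'$ trivially, and this upgrades to the auxiliary invariant that I would maintain throughout the induction, namely: \emph{$Y_i$ is isomorphic in codimension one to $Y_i'$ via a birational map identifying the strict transforms of the boundary and the push-forward of $M'$}.

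For the inductive step, assume $h_i \colon (Y_i, \Delta_i + \Xi_i) \to (X_i, B_i + M_i)$ is a $\Q$-factorial terminalization satisfying the invariant. I would define $(Y_{i+1}, \Delta_{i+1} + \Xi_{i+1}) := (Y_{i+1}', \Delta_{i+1}' + \Xi_{i+1}')$ and $h_{i+1} := h_{i+1}'$, so that the invariant is preserved automatically. Composing the invariant for $Y_i$ with the hypothesis that $Y_{i+1}'$ is isomorphic in codimension one to $Y_i'$ yields a birational map $\phi_i \colon Y_i \dashrightarrow Y_{i+1}$ that is an isomorphism in codimension one and identifies $\Delta_i, \Xi_i$ with $\Delta_{i+1}, \Xi_{i+1}$. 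The core task is to realize $\phi_i$ as a finite sequence of $(K_{Y_i} + \Delta_i + \Xi_i)$-flips over $Z_i$.

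Observe that $K_{Y_{i+1}} + \Delta_{i+1} + \Xi_{i+1} = h_{i+1}^*(K_{X_{i+1}} + B_{i+1} + M_{i+1})$ is nef over $Z_i$, as the pullback of a divisor ample over $Z_i$. Following the strategy of \cite[Lemma 3.2]{LMT}, I would construct the sequence of flips by running a $(K_{Y_i} + \Delta_i + \Xi_i)$-MMP over $Z_i$ with scaling of an ample divisor; this is available because $(Y_i, \Delta_i + \Xi_i)$ is $\Q$-factorial klt. Two properties must be verified. First, no step of this MMP is a divisorial contraction: if such a contraction contracted a divisor $E$, then by iso-in-codim-one the strict transform $E'$ would persist on $Y_{i+1}$, and the Negativity lemma applied on a common resolution, using the nefness of $K_{Y_{i+1}} + \Delta_{i+1} + \Xi_{i+1}$ over $Z_i$, would contradict the $(K + \Delta + \Xi)$-negativity of the contracted extremal ray. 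Second, the MMP terminates precisely at $Y_{i+1}$: by Lemma \ref{lem:monotonicity} log discrepancies strictly increase across each flip on its flipping/flipped locus, and Lemma \ref{lem:MM_iso_codim1} pins down the output, which is a minimal model in codimension one iso to $Y_{i+1}$, as $Y_{i+1}$ itself. Concatenating the sequences $\rho_i$ over all $i$ produces the required sequence of flips over $Z$ starting from the $\Q$-factorial terminal g-pair $(Y_1, \Delta_1 + \Xi_1)$.

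The main obstacle is the termination in the second verification: because $K_{Y_i} + \Delta_i + \Xi_i = h_i^*(K_{X_i} + B_i + M_i)$ is anti-nef over $Z_i$, hence not pseudo-effective over $Z_i$, the standard MMP-with-scaling termination results for pseudo-effective divisors are not directly applicable. One must exploit the \emph{a priori} existence of $Y_{i+1}$ as a small $\Q$-factorial modification of $Y_i$ with $K + \Delta + \Xi$ nef over $Z_i$---for instance by choosing the scaling divisor as the transport via $\phi_i^{-1}$ of an ample divisor on $Y_{i+1}$---in order to bound the scaling thresholds and force termination exactly at $Y_{i+1}$, as is done in \cite[Lemma 3.2]{LMT}.
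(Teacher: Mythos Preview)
Your overall strategy and the tools you invoke are essentially those of the paper, but one organizational choice leads you astray. The paper does \emph{not} fix $Y_{i+1} := Y_{i+1}'$; instead it first observes that $(Y_{i+1}', \Delta_{i+1}' + \Xi_{i+1}')$ is a minimal model of $(Y_i, \Delta_i + \Xi_i)$ over $Z_i$ (immediate from crepancy of the terminalizations and the hypotheses on the $Y_i'$), then invokes \cite[Lemma 4.4(2)]{BZ16}---which guarantees that some MMP with scaling of an ample divisor terminates once a minimal model is known to exist---and \emph{defines} $(Y_{i+1}, \Delta_{i+1} + \Xi_{i+1})$ to be the output. The morphism $h_{i+1} \colon Y_{i+1} \to X_{i+1}$ is then produced by \cite[Lemma 2.12]{LMT}, since $(X_{i+1}, B_{i+1} + M_{i+1})$ is the unique log canonical model over $Z_i$. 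Only at this point does Lemma~\ref{lem:MM_iso_codim1} enter: comparing the two minimal models $Y_{i+1}$ and $Y_{i+1}'$ shows they are isomorphic in codimension one, hence so are $Y_i$ and $Y_{i+1}$, so the MMP consisted only of flips and $h_{i+1}$ is indeed a terminalization.

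Two points in your write-up are off. First, the claim that $K_{Y_i} + \Delta_i + \Xi_i$ is ``not pseudo-effective over $Z_i$'' is false: the morphism $Y_i \to Z_i$ is birational, and in any case a minimal model over $Z_i$ exists by the observation above, so \cite[Lemma 4.4(2)]{BZ16} applies directly and the elaborate scaling workaround of your final paragraph is unnecessary. Second, your step-by-step Negativity argument for ruling out divisorial contractions does not work in the order you present it: the comparison with $Y_{i+1}'$ via Lemma~\ref{lem:MM_iso_codim1} requires both models to already be minimal over $Z_i$, so termination must be secured \emph{before} one can conclude the MMP was small. That lemma also only yields isomorphism in codimension one, not identity, which is another reason to let $Y_{i+1}$ be whatever the MMP produces rather than insisting on $Y_{i+1}'$.
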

	
	\begin{proof}
		Fix $ i \geq 1 $. Since $ (X_{i+1},B_{i+1}+M_{i+1}) $ is the log canonical model of $ (X_i,B_i+M_i) $ over $ Z_i $, by assumption and by construction of a $ \Q $-factorial terminalization it follows that $ (Y_{i+1}',\Delta_{i+1}'+\Xi_{i+1}') $ is a minimal model of $ (Y_i',\Delta_i' + \Xi_i') $ over $ Z_i $.
		
		Set $ (Y_1, \Delta_1 + \Xi_1) := (Y_1',\Delta_1' + \Xi_1') $ and denote by $ h_1 \colon (Y_1, \Delta_1 + \Xi_1) \to (X_1, B_1 + M_1) $ the corresponding morphism. By \cite[Lemma 4.4(2)]{BZ16} we may run some $ (K_{Y_1} + \Delta_1 + \Xi_1) $-MMP with scaling of an ample divisor over $ Z_1 $ which terminates with a minimal model $ (Y_2, \Delta_2 + \Xi_2) $ of $ (Y_1, \Delta_1 + \Xi_1) $ over $ Z_1 $. Since $ (X_2,B_2+M_2) $ is the log canonical model of $ (Y_1,\Delta_1+\Xi_1) $ over $ Z_1 $, it follows from Lemma \ref{lem:mapfromMMtoCM} that there exists a projective birational morphism $ h_2 \colon Y_2 \to X_2 $ such that $ K_{Y_2} + \Delta_2 + \Xi_2 \sim_\R h_2^* (K_{X_2} + B_2 + M_2) $. Since $ (Y_2',\Delta_2'+\Xi_2') $ is a minimal model of $ (Y_1,\Delta_1 + \Xi_1) $ over $ Z_1 $, by Lemma \ref{lem:MM_iso_codim1} we deduce that this MMP with scaling over $ Z_1 $ consists only of flips, and therefore $ h_2 \colon (Y_2, \Delta_2 + \Xi_2) \to (X_2,B_2+M_2) $ is a $ \Q $-factorial terminalization of $ (X_2,B_2+M_2) $. Thus, we obtain the required diagram by repeating this procedure.
	\end{proof}
	
	The following result allows us to pass from a sequence of flips for lc g-pairs to a sequence of flips for dlt g-pairs under some mild assumptions in lower dimensions. Its proof is similar to the proof of Lemma \ref{lem:lift_klt_term}. One of the basic differences, though, is that \cite[Theorem 4.4(ii)]{LT19} must be invoked instead of \cite[Lemma 4.4(2)]{BZ16}. This is actually the reason why we have to make those assumptions in lower dimensions. For the details of the proof of Lemma \ref{lem:lift_lc_dlt} we refer to \cite{LMT}, where a slightly more general version of this result was established. Incidentally, we emphasize that \cite{LMT} does not depend on \cite{Mor18}.
	
	\begin{lem}\label{lem:lift_lc_dlt}
		Assume the existence of minimal models for smooth varieties of dimension $ n-1 $.
		
		Let $ (X_1/Z,B_1+M_1) $ be an NQC lc g-pair  of dimension $ n $ and let
		\begin{center}
			\begin{tikzcd}[column sep = 0.8em, row sep = 1.75em]
				(X_1,B_1+M_1) \arrow[dr] \arrow[rr, dashed, "\pi_1"] && (X_2,B_2+M_2) \arrow[dl] \arrow[dr] \arrow[rr, dashed, "\pi_2"] && (X_3,B_3+M_3) \arrow[dl] \arrow[rr, dashed, "\pi_3"] && \cdots \\
				& Z_1 && Z_2
			\end{tikzcd}
		\end{center}
		be a sequence of flips over $Z$ starting from $(X_1/Z,B_1+M_1)$. Then there exists a diagram
		\begin{center}
			\begin{tikzcd}[column sep = 0.8em, row sep = large]
				(Y_1,\Delta_1+\Xi_1) \arrow[d, "h_1" swap] \arrow[rr, dashed, "\rho_1"] && (Y_2,\Delta_2+\Xi_2) \arrow[d, "h_2" swap] \arrow[rr, dashed, "\rho_2"] && (Y_3,\Delta_3+\Xi_3) \arrow[d, "h_3" swap] \arrow[rr, dashed, "\rho_3"] && \cdots 
				\\ 
				(X_1,B_1+M_1) \arrow[dr] \arrow[rr, dashed, "\pi_1"] && (X_2,B_2+M_2) \arrow[dl] \arrow[dr] \arrow[rr, dashed, "\pi_2"] && (X_3,B_3+M_3) \arrow[dl] \arrow[rr, dashed, "\pi_3"] && \cdots \\
				& Z_1 && Z_2
			\end{tikzcd}
		\end{center}
		where, for each $i \geq 1$, the map $ \rho_i \colon Y_i \dashrightarrow Y_{i+1} $ is a $(K_{Y_i}+\Delta_i+\Xi_i)$-MMP over $ Z_i $ and the map $ h_i $ is a dlt blow-up of $ (X_i,B_i+M_i) $. In particular, we obtain overall an MMP over $ Z $ for the NQC $\Q$-factorial dlt g-pair $ (Y_1,\Delta_1+\Xi_1) $.
	\end{lem}
	
	The next result plays a fundamental role in the proof of Theorem \ref{thm:mainthm}. We use here (as well as in Theorem \ref{thm:mainthm}) the definition of a flip with respect to an arbitrary $\R$-Cartier $\R$-divisor, see \cite[Definition 2.3]{Bir07}.
	
	\begin{lem} \label{lem:supplement_g}
		Let $ (X,B+M) $ be an lc g-pair with data $ X' \overset{f}{\to} X \to Z $ and $ M' $. Let
		\begin{center}
			\begin{tikzcd}
				(X,B+M) \arrow[dr, "\theta" swap] \arrow[rr, dashed, "\pi"] && (X^+,B^+ + M^+) \arrow[dl, "\theta^+"] \\
				& W
			\end{tikzcd}
		\end{center}
		be a flip over $ Z $ such that the flipping locus $ \Exc(\theta) $ does not intersect the non-klt locus of the g-pair $ (X,B+M) $, that is, $ \Exc(\theta) \cap \nklt(X,B+M) = \emptyset $. Let $ h \colon (Y,\Delta+\Xi) \to (X,B+M) $ be a dlt blow-up of $ (X,B+M) $, where the g-pair $(Y,\Delta+\Xi)$ comes with data $ X' \overset{g}{\to} Y \to Z $ and $ M' $;
		we may assume that $ X' $ is a sufficiently high model so that $ f = h \circ g $. Run a $ (K_Y+\Delta+\Xi) $-MMP with scaling of an ample divisor over $ W $ and let 
		\begin{center}
			\begin{tikzcd}
				(Y_i,\Delta_i + \Xi_i) \arrow[dr, "\theta_i" swap] \arrow[rr, dashed, "\pi_i"] && (Y_{i+1}, \Delta_{i+1} + \Xi_{i+1}) \arrow[dl, "\theta_i^+"] \\
				& W_i 
			\end{tikzcd}
		\end{center}
		be its intermediate steps, where $ (Y_1,\Delta_1 + \Xi_1) := (Y,\Delta+\Xi) $. Then:
		\begin{enumerate}
			\item This $ (K_Y+\Delta+\Xi) $-MMP over $ W $ consists only of flips, at each step the flipping locus $ \Exc(\theta_i) $ avoids the non-klt locus of $ (Y_i,\Delta_i + \Xi_i) $, that is, $ \Exc(\theta_i) \cap \Supp \Delta_i^{=1} = \emptyset $ for every $ i \geq 1 $, and thus the sequence of $ (K_Y+\Delta+\Xi) $-flips over $ W $ is also a sequence of $ \big(K_Y+ \Delta^{<1} + \Xi \big) $-flips over $ W $, where $ \big(Y,\Delta^{<1} +\Xi \big) $ is a $ \Q $-factorial klt g-pair.
			
			\item Assume now that there exist an $ \R $-divisor $ P' $ on $ X' $ which is nef over $ Z $ and an effective $ \R $-divisor $ N $ on $ X $ such that $ P+N $ is $ \R $-Cartier, $ -(P+N) $ is ample over $ W $ and $ h^*(P+N) = Q + \Lambda + F $, where $ P := f_* P' $, $ Q := g_* P' $, $ \Lambda \geq 0 $ and $ \Supp F \subseteq \Supp \lfloor \Delta \rfloor $. Then the sequence of $ (K_Y+\Delta+\Xi) $-flips over $ W $ is also a sequence of $ (Q+\Lambda) $-flips over $ W $.
		\end{enumerate}
	\end{lem}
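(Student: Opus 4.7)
The strategy is to track how the hypothesis $\Exc(\theta) \cap \nklt(X,B+M) = \emptyset$ propagates through the $(K_Y+\Delta+\Xi)$-MMP with scaling over $W$, making essential use of the crepancy identity $K_Y + \Delta + \Xi \sim_\R h^*(K_X+B+M)$. Set $T := \theta(\Exc(\theta)) \subseteq W$, denote by $h_{W,i} \colon Y_i \to W$ the induced morphism, and put $E_i := h_{W,i}^{-1}(T)$. Since $h$ is an isomorphism over $X \setminus \nklt(X,B+M) \supseteq \Exc(\theta)$, we have $E_1 = h^{-1}(\Exc(\theta)) \cong \Exc(\theta)$ of codimension $\geq 2$ in $Y$; moreover, every component of $\lfloor \Delta \rfloor$, whose $h$-image lies in $\nklt(X,B+M)$, is disjoint from $E_1$. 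Combined with the $\theta$-ampleness of $-(K_X+B+M)$, this forces any complete curve on $Y$ contracted to a point in $W$ and having negative $(K_Y+\Delta+\Xi)$-degree to lie entirely inside $E_1$.

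For (i), I would prove by induction on $i$ the following compatible statements: (a) $\psi_i \colon Y \dashrightarrow Y_i$ restricts to an isomorphism $Y \setminus E_1 \xrightarrow{\sim} Y_i \setminus E_i$ over $W$; (b) $\Exc(\theta_i) \subseteq E_i$; (c) $\theta_i$ is flipping, not divisorial. The crucial inductive step for (b) is a dichotomy: if a curve $C \in R_i$ were disjoint from $E_i$, then via (a) it would correspond to a complete $h$-exceptional curve $C'$ on $Y$ with $(K_Y+\Delta+\Xi) \cdot C' = 0$, contradicting $(K_{Y_i}+\Delta_i+\Xi_i) \cdot C < 0$ by intersection-number compatibility on the iso locus; if $C$ met $E_i$ only partially, the closure in $Y$ of its $V$-part would be a complete curve whose (single) $h_W$-image point would need to lie simultaneously in $T$ and in $W \setminus T$, again impossible. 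Statement (c) follows because no prime divisor of $Y_i$ can correspond via (a) to a divisor contained in the codimension-$\geq 2$ set $E_1$. Finally, (a) transports $\Supp \lfloor \Delta \rfloor \cap E_1 = \emptyset$ to $\Supp \lfloor \Delta_i \rfloor \cap E_i = \emptyset$, hence $\Supp \lfloor \Delta_i \rfloor \cap \Exc(\theta_i) = \emptyset$. Consequently $\Delta_i^{=1} \cdot R_i = 0$, so each step is also a $(K_{Y_i} + \Delta_i^{<1} + \Xi_i)$-flip, and the g-pair $(Y, \Delta^{<1} + \Xi)$ is $\Q$-factorial klt.

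For (ii), since $\rho(X/W) = 1$ and both $-(P+N)$ and $-(K_X+B+M)$ are $\theta$-ample, there exists $\alpha > 0$ with $(P+N) \equiv_W \alpha(K_X+B+M)$; pulling back via $h$ yields $D_Y := (Q+\Lambda+F) - \alpha(K_Y+\Delta+\Xi) \equiv_W 0$ on $Y$. I would propagate this identity across the flips inductively to obtain $D_{Y_i} := (Q_i+\Lambda_i+F_i) - \alpha(K_{Y_i}+\Delta_i+\Xi_i) \equiv_W 0$ on each $Y_i$. The inductive step uses a two-sided application of the Negativity lemma for flips: from $D_{Y_i} \equiv_{W_i} 0$ (a weakening of $\equiv_W 0$), both $D_{Y_i}$ and $-D_{Y_i}$ are $\theta_i$-nef, which forces $p^* D_{Y_i} = q^* D_{Y_{i+1}}$ on any common resolution of the flip, and intersecting with curves over $W$ upgrades this to $D_{Y_{i+1}} \equiv_W 0$. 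For $C \in R_i$ one then computes $(Q_i+\Lambda_i+F_i) \cdot C = \alpha(K_{Y_i}+\Delta_i+\Xi_i) \cdot C < 0$; combining with $F_i \cdot C = 0$ from (i) and $\rho(Y_i/W_i) = 1$ shows that $-(Q_i+\Lambda_i)$ is $\theta_i$-ample, while the analogous argument on the flipped side yields $\theta_i^+$-ampleness of $(Q_{i+1}+\Lambda_{i+1})$. Hence each step is a $(Q+\Lambda)$-flip.

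The hard part will be the propagation of $\equiv_W 0$ across flips in part (ii), which hinges on the two-sided Negativity lemma for flips; a secondary subtlety in (i) is the dichotomy ruling out curves in $R_i$ that only partially meet $E_i$, for which a careful completeness argument exploiting the isomorphism (a) on the complement of $E_i$ is required.
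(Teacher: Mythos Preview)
Your approach is essentially correct and closely parallels the paper's. The paper tracks the \emph{open} complement $V_i = h_{W,i}^{-1}(W \setminus T)$ and argues that $(K_{Y_i}+\Delta_i+\Xi_i)|_{V_i}$ is semi-ample over $W \setminus T$ (being pulled back from $(K_X+B+M)|_U$, which is trivially semi-ample there since $\pi|_U$ is an isomorphism), so the MMP never touches $V_i$; you instead track the complementary closed set $E_i$. The two arguments are dual and equally valid.

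Two small corrections to part (i). First, a dlt blow-up $h$ is in general only \emph{small} over $X \setminus \nklt(X,B+M)$, not an isomorphism, since $X$ is not assumed $\Q$-factorial; this still yields $\codim_Y E_1 \geq 2$ (no $h$-exceptional divisor maps into $\Exc(\theta)$) and $\Supp\lfloor\Delta\rfloor \cap E_1 = \emptyset$, so your argument survives unchanged. Second, your dichotomy is unnecessary: any curve $C \subseteq Y_i$ contracted over $W$ maps to a single point $w \in W$, and $C \subseteq E_i$ or $C \cap E_i = \emptyset$ according to whether $w \in T$ or not; the ``partially meets'' case simply cannot occur.

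For part (ii), the paper just asserts the propagation $K_{Y_i}+\Delta_i+\Xi_i \equiv_W \alpha(Q_i+\Lambda_i+F_i)$ without further comment. Your justification via two-sided negativity is the right instinct but is incomplete as stated: knowing that $\pm D_{Y_i}$ are $\theta_i$-nef does not by itself force $p^*D_{Y_i} = q^*D_{Y_{i+1}}$, since the Negativity lemma also needs control on the $\theta_i^+$ side. The clean fix is descent: $D_{Y_i} \equiv_{W_i} 0$ together with the contraction theorem for the extremal contraction $\theta_i$ gives $D_{Y_i} = \theta_i^*\bar D$ for some $\R$-Cartier $\bar D$ on $W_i$; then $D_{Y_{i+1}} = (\theta_i^+)^*\bar D$ because $\theta_i^+$ is small, whence $p^*D_{Y_i} = q^*D_{Y_{i+1}}$ and your curve-counting argument upgrades this back to $D_{Y_{i+1}} \equiv_W 0$.
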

	
	\begin{proof}~
		\noindent (i) First, we show that
		\begin{equation} \label{eq:1_suppl_g}
			\Exc(\theta_i) \cap \Supp \lfloor \Delta_i \rfloor = \emptyset \ \text{ for every } i \geq 1 . 
		\end{equation}
		To this end, set $ U := X \setminus \Exc(\theta) $, $ V_1 := h^{-1}(U) $, $ T = \theta(U) $, and note that $ T $ is an open subset of $ W $. Since $ \pi |_U \colon U \to \pi(U) $ is an isomorphism, $ (K_X + B + M)|_U  $ is trivially semi-ample over $ T $, and thus
		\[ (K_{Y_1} + \Delta_1 + \Xi_1)|_{V_1} = \left( h|_{V_1} \right)^*\big( (K_X+B+M)|_U \big) \]
		is also semi-ample over $ T $. Therefore, $ V_1 \cap \Exc(\theta_1) = \emptyset $, so $ V_1 $ is contained in the locus where $ \pi_1 $ is an isomorphism, and we may thus consider its isomorphic image $ V_2 := \pi_1(V_1) $ and infer that $ (K_{Y_2} + \Delta_2 + \Xi_2)|_{V_2} $ is semi-ample over $ T $, hence $ V_2 \cap \Exc(\theta_2) = \emptyset $. By proceeding analogously, if we set $ V_i := \pi_{i-1} (V_{i-1}) $, then we obtain that
		\begin{equation} \label{eq:2_suppl_g}
			V_i \cap \Exc(\theta_i) = \emptyset \ \text{ for all } i \geq 1 .
		\end{equation}
		We can now readily derive \eqref{eq:1_suppl_g} from \eqref{eq:2_suppl_g} as follows. We first note that
		\[ V_1 \supseteq \nklt(Y_1,\Delta_1+\Xi_1) = \Supp \lfloor \Delta_1 \rfloor , \]
		since $ \nklt(X,B+M) \subseteq U $ by assumption, and subsequently, by construction and by the above relation, we deduce inductively that 
		\[ V_i \supseteq \nklt(Y_i,\Delta_i+\Xi_i) = \Supp \lfloor \Delta_i \rfloor \ \text{ for all } i \geq 2 . \]
		Consequently, we obtain \eqref{eq:1_suppl_g} by combining the above two relations with \eqref{eq:2_suppl_g}.
		
		Next, to prove that the $ (K_{Y_1}+\Delta_1+\Xi_1) $-MMP with scaling over $ W $ consists only of flips, we argue by contradiction. If a divisorial contraction $ \theta_i \colon Y_i \to W_i $ with exceptional prime divisor $ E_i := \Exc(\theta_i) $ appears at the $ i $-th step of this MMP (we may assume that $ i $ is the smallest such index), then by \eqref{eq:1_suppl_g} we have $ E_i \cap \Supp \lfloor \Delta_i \rfloor = \emptyset $, so the strict transform $ E_1 $ of $ E_i $ on $ Y_1 $ cannot be a component of $ \lfloor \Delta_1 \rfloor $. On the other hand, $ E_1 $ must be contracted over $W$, and since $ \theta $ is a small map, $ E_1 $ is an $ h $-exceptional divisor, and thus a component of $ \lfloor \Delta_1 \rfloor $ by construction of a dlt blow-up, a contradiction.
		
		Finally, if we denote by $ R_i $ the $ (K_{Y_i}+\Delta_i+\Xi_i) $-negative extremal ray contracted at the $ i $-th step of the given sequence of flips, then \eqref{eq:1_suppl_g} implies
		$$ \big( K_{Y_i} + \Delta_i^{<1} + \Delta_i^{=1} + \Xi_i \big) \cdot R_i = \big( K_{Y_i} + \Delta_i^{<1} + \Xi_i \big) \cdot R_i , $$
		whence the last assertion of (i).
		
		\medskip
		
		\noindent (ii) For every $ i \geq 1 $ we denote by $ Q_i $, $ \Lambda_i $ and $ F_i $ the strict transforms on $ Y_i $ of $ Q_1 := Q $, $ \Lambda_1 := \Lambda $ and $ F_1 := F $, respectively. Since $ -(P+N) $ is ample over $ W $ and since $ \rho(X/W) = 1 $, we have $ K_X + B + M \equiv_W \alpha (P + N) $ for some $ \alpha > 0 $, and thus
		\[ K_Y + \Delta + \Xi = h^*(K_X + B + M) \equiv_W \alpha h^* (P + N) = \alpha (Q + \Lambda + F) . \]
		Therefore, 
		\begin{equation} \label{eq:3_suppl_g}
			K_{Y_i} + \Delta_i + \Xi_i \equiv_W \alpha (Q_i + \Lambda_i + F_i) \ \text{ for every } i \geq 1 .
		\end{equation}
		Moreover, by assumption and by (i) we infer that
		\begin{equation} \label{eq:4_suppl_g}
			\Supp F_i \subseteq \Supp \lfloor \Delta_i \rfloor \ \text{ for every } i \geq 1 .
		\end{equation}
		Hence, if $ C_i \subseteq Y_i $ is a flipping curve at the $ i $-th step of the sequence of $ (K_{Y_1} + \Delta_1 + \Xi_1) $-flips over $ W $, then by \eqref{eq:1_suppl_g}, \eqref{eq:3_suppl_g} and \eqref{eq:4_suppl_g} we obtain
		\[ (Q_i+\Lambda_i) \cdot C_i = (Q_i + \Lambda_i + F_i) \cdot C_i = \frac{1}{\alpha} (K_{Y_i} + \Delta_i + \Xi_i) \cdot C_i < 0 . \]
		In other words, $ (Q_i + \Lambda_i) \cdot R_i < 0 $ for every $ i \geq 1 $, as desired.
	\end{proof}

	\subsection{The difficulty}
	
	The notion of the \emph{difficulty} of a terminal variety was originally introduced by Shokurov in \cite{Sho85} in order to prove the termination of flips for terminal 3-folds. Since then several variants of this notion have appeared in the literature, see \cite{Kaw92,Kol92,Fuj04,Fuj05,AHK07}, as the main idea to tackle the termination of flips conjecture has remained the same. Here, we introduce a version of the difficulty in the setting of NQC g-pairs, which generalizes \cite[Definition 6.20]{KM98} and plays a key role in the proof of Theorem \ref{thm:gterminal}.
	
	\begin{dfn} \label{dfn:diff}
		Let $(X,B+M)$ be a $ \Q $-factorial NQC terminal g-pair with data $X'\xto X\to Z$ and $M'=\sum_{j=1}^l\mu_j M_j'$, where $B=\sum_{i=1}^s b_i B_i$, $ b_i \in [0,1) $ and the $B_i$ are distinct prime divisors, $\mu_j\ge0$ and the $M_j'$ are $ \Q $-Cartier divisors which are nef over $Z$. Set $b:=\max\{b_1, \dots, b_s\}$ ($ b:=0 $ if $ B=0 $), consider the set
		$$ \mathcal{S}_b :=\bigg\{ \sum_{i=1}^sn_ib_i+\sum_{j=1}^lm_j\mu_j \ \Big| \ n_i,m_j\in \Z_{\ge0} \bigg\}\cap[b,+\infty) $$
		($ S_0 = \{0\}$ if $ B=M=0 $), and for every $ \xi \in \mathcal{S}_b $, set
		\begin{align*}
			d_{\xi}(X,B+M) := \# \big\{ E \mid E & \text{ is an exceptional divisorial valuation} \\ 
			&\text{ over $X$ with }  a(E,X,B+M)<2-\xi \big\}.
		\end{align*}
		The \emph{difficulty function} of $(X,B+M)$ is defined as
		$$ d(X,B+M):= \sum_{\xi\in\mathcal{S}_b} d_{\xi}(X,B+M) . $$
	\end{dfn}
	
	We show that the difficulty is finite and non-increasing under flips.
	
	\begin{lem} \label{lem:difficulty}
		Assume the notation from Definition \ref{dfn:diff}. Then
		$$ 0 \leq d(X,B+M) < +\infty . $$
		Moreover, if
		$ \pi \colon (X,B+M) \dto (X^+,B^+ + M^+) $
		is a $ (K_X+B+M) $-flip over $ Z $, then
		$$ d(X,B+M) \geq d(X^+,B^++M^+) . $$
	\end{lem}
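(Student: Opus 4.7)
\emph{Monotonicity.} Since a flip is a small birational map, no divisor is contracted by $\pi$, so the coefficients of $B$ and $B^+$ agree. Moreover, we may take $X'$ high enough to sit above $X^+$ as well, so that $M'$ is unchanged and $M^+$ has the same coefficients $\mu_j$ as $M$; consequently the quantity $b$ and the set $\mathcal{S}_b$ are identical for $(X,B+M)$ and $(X^+,B^++M^+)$. Fix $\xi\in\mathcal{S}_b$. By Lemma \ref{lem:monotonicity}, every exceptional divisorial valuation $E$ satisfies $a_\ell(E,X,B+M)\le a_\ell(E,X^+,B^++M^+)$, whence
\[ \{E:a_\ell(E,X^+,B^++M^+)<2-\xi\}\subseteq\{E:a_\ell(E,X,B+M)<2-\xi\}, \]
so $d_\xi(X^+,B^++M^+)\le d_\xi(X,B+M)$. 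Summing over $\xi\in\mathcal{S}_b$ yields the required inequality.

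\emph{Non-negativity and reduction.} Non-negativity is immediate, since each $d_\xi$ counts a set. For finiteness, I would first invoke terminality of $(X,B+M)$ to get $a_\ell(E,X,B+M)>1$ for every exceptional divisorial valuation $E$; hence $d_\xi=0$ whenever $\xi\ge 1$, and only $\xi\in\mathcal{S}_b\cap[b,1)$ contribute to the sum. This set is finite, because each element is a non-negative integer combination $\sum n_ib_i+\sum m_j\mu_j<1$ of the fixed non-negative reals $b_i,\mu_j$, which (after discarding indices with $b_i=0$ or $\mu_j=0$) confines the tuple $(n_i,m_j)$ to a bounded region of $\Z_{\ge 0}^{s+l}$.

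\emph{Bounding each $d_\xi$.} For each $\xi\in\mathcal{S}_b\cap[b,1)$, I would apply Proposition \ref{prop:klt_discrep} with the explicit value $\varepsilon=1-b$. Revisiting its proof, the terminality of $(X,B+M)$ forces every exceptional prime divisor $E$ on the chosen log resolution $X'$ to satisfy $a_\ell(E,X,B+M)>1$ and hence to contribute a strictly negative coefficient to $B'$; thus the set $\mathcal{C}$ of coefficients of $(B')^+$ reduces to $\{b_1,\dots,b_s\}$, giving $\max\mathcal{C}=b$ and allowing the choice $\varepsilon=1-b$. Consequently, for every such $\xi$ we have $2-\xi\le 2-b=1+\varepsilon$, so
\[ d_\xi(X,B+M)\le \#\{E:a_\ell(E,X,B+M)<1+\varepsilon\}<\infty, \]
and summing over the finite set $\mathcal{S}_b\cap[b,1)$ gives $d(X,B+M)<\infty$.

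The main subtlety I expect is the explicit extraction $\varepsilon=1-b$ from Proposition \ref{prop:klt_discrep}: this hinges on terminality (so that every exceptional prime divisor on a chosen log resolution contributes negatively to $B'$, not positively) and deserves a careful check. Once this is in hand, both the finiteness and the monotonicity assertions follow cleanly from the ingredients assembled above.
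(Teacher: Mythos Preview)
Your proof is correct and follows essentially the same route as the paper's: split $\mathcal{S}_b$ into $[1,\infty)$ (where $d_\xi=0$ by terminality) and $[b,1)$ (a finite set), then invoke Proposition~\ref{prop:klt_discrep} with $\varepsilon=1-b$ to bound each remaining $d_\xi$, and use Lemma~\ref{lem:monotonicity} for the flip inequality. You supply more detail than the paper does---in particular your explicit justification that terminality forces $(B')^+$ to have maximal coefficient $b$, and your remark that the small birational map identifies exceptional valuations and preserves $\mathcal{S}_b$---but the argument is the same.
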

	
	\begin{proof}
		First, we deal with the finiteness of the difficulty. For every $ \xi \in \mathcal{S}_b \cap [1, + \infty) $ we clearly have $ d_{\xi}(X,B+M) = 0 $ as $(X,B+M)$ is terminal. On the other hand, there are only finitely many $ \xi \in \mathcal{S}_b \cap [0,1) $ and for each such $ \xi $ it holds that $ d_{\xi}(X,B+M) < + \infty $ by Proposition \ref{prop:klt_discrep}, since we necessarily have $ 2 - \xi \leq 1 + \varepsilon = 2 - b $. Hence, $ d(X,B+M) < +\infty $. Additionally, the difficulty is obviously non-negative, while Lemma \ref{lem:monotonicity} shows that it is non-increasing under a flip. This completes the proof.
	\end{proof}

	\section{Termination of Flips in Dimension 3} 
	\label{section:term_in_dim_3}
	
	In this section we prove Theorem \ref{thm:terminationforthreefolds}. The proof consists essentially of three steps. First, we show the termination of flips for $3$-dimensional $\Q$-factorial NQC terminal g-pairs by invoking Lemma \ref{lem:difficulty} (see Theorem \ref{thm:gterminal}). Our proof is inspired by the previous results on the termination of flips for $3$-dimensional terminal pairs. Second, we establish the termination of flips for $3$-dimensional $\Q$-factorial NQC klt g-pairs (see Theorem \ref{thm:gklt}). Specifically, we prove analogs of \cite[Proposition 4.4 and Lemma 4.4.1]{Sho96} in the context of g-pairs (see Propositions \ref{prop:bddci} and \ref{prop:finitelogdiscrepancy}), which, together with Lemma \ref{lem:lift_klt_term}, enable us to derive Theorem \ref{thm:gklt} from Theorem \ref{thm:gterminal}. Here, we follow the same line of arguments as in \cite{Kaw92,Sho96}. Third, we obtain Theorem \ref{thm:terminationforthreefolds} by combining Lemma \ref{lem:lift_lc_dlt} and the special termination for NQC $\Q$-factorial dlt g-pairs of dimension $3$ (see \cite[Theorem 5.1]{LMT} and note that it does not depend on \cite{Mor18}) with Theorem \ref{thm:gklt}.
	
	\begin{thm}\label{thm:gterminal}
		Let $(X_1/Z,B_1+M_1)$ be a $3$-dimensional $\Q$-factorial NQC terminal g-pair. Then any sequence of flips over $ Z $ starting from $(X_1/Z,B_1+M_1)$ terminates.
	\end{thm}
	
	\begin{proof}
		Let 
		\begin{center}
			\begin{tikzcd}[column sep = 0.8em, row sep = 1.75em]
				(X_1,B_1+M_1) \arrow[dr, "\theta_1" swap] \arrow[rr, dashed, "\pi_1"] && (X_2,B_2+M_2) \arrow[dl, "\theta_1^+"] \arrow[dr, "\theta_2" swap] \arrow[rr, dashed, "\pi_2"] && (X_3,B_3+M_3) \arrow[dl, "\theta_2^+"] \arrow[rr, dashed, "\pi_3"] && \dots \\
				& Z_1 && Z_2
			\end{tikzcd}
		\end{center}
		be a sequence of flips over $Z$ starting from the g-pair $(X_1,B_1+M_1)$ which comes with data $X_1' \overset{f_1}{\longrightarrow} X_1 \to Z$ and $M_1'=\sum_{j=1}^l \mu_j M_{1,j}'$, where $\mu_j\ge0$ and the $M_{1,j}'$ are $ \Q $-Cartier divisors which are nef over $Z$. Note that each g-pair $ (X_i,B_i+M_i) $ in this sequence of flips is terminal by Lemma \ref{lem:monotonicity} and comes with data $ X_1' \overset{f_i}{\longrightarrow} X_i \to Z $ and $ M_1' $. We will prove the statement by induction on the number of components of $B_1$. 
		
		We first consider the case when $B_1=0$. Since $ X_{i+1} $ is terminal, it is smooth at the generic point of every flipped curve. If $ C_{i+1} $ is a flipped curve with the generic point $ \eta_{i+1} \in X_{i+1} $ and if $E_{i+1}$ is the prime divisor obtained by blowing up $ X_{i+1} $ at $ \eta_{i+1} $, then by Lemmata \ref{lem:blup} and \ref{lem:monotonicity} we have 
		$$ a(E_{i+1},X_i,M_{i}) <  a(E_{i+1},X_{i+1},M_{i+1})=2-\sum_j m_j\mu_j $$
		for some non-negative integers $m_j$, and therefore $d(X_i,M_{i})>d(X_{i+1},M_{i+1})$. Thus, if the given sequence of flips were infinite, then we would obtain a strictly decreasing sequence $ \left\{ d(X_i,M_i) \right\}_{i=1}^{+\infty} $ of non-negative integers with $ d(X_1,M_1) < +\infty $, which is impossible. Hence the given sequence of flips over $ Z $ terminates.
		
		Assume now that the statement holds when the number of components of $B_1$ is $\leq s-1$, and suppose that the number of components of $ B_1$ is $s \geq 1$. Write $B_1 = \sum_{k=1}^s b_k B_{1,k}$, where $ b_k \in (0,1) $ and the $B_{1,k}$ are distinct prime divisors. Set $ b:= \max\{b_1, \dots, b_s\} $, let $D_1:=\sum_{\{k \mid  b_k=b\}}B_{1,k}$, and denote by $B_{i,k}$ (resp.\ $D_i$) the strict transforms of $B_{1,k}$ (resp.\ $ D_1 $) on $X_i$ for any $i,k$.
		
		\begin{claim}\label{claim:nocontain}
			After finitely many flips, no flipping curve 
			is contained in the strict transforms $ D_i $ of $D_1$.
		\end{claim}
		
		\noindent Grant the above claim for a moment and relabel the given sequence of flips so that no flipping curve is contained in the strict transforms of $ D_1 $. Fix $ i \geq 1 $. Then $ D_i \cdot C \geq 0 $ for all flipping curves $ C \subseteq X_i $, and thus the $ (K_{X_i} + B_i + M_i) $-flip over $ Z $ is also a $ \big( K_{X_i} + (B_i - bD_i) + M_i \big) $-flip over $ Z $. Consequently, the given sequence of $ (K_{X_i} + B_i + M_i) $-flips over $Z$ is also a sequence of $ \big( K_{X_i} + (B_i - bD_i) + M_i \big) $-flips over $ Z $, where $ B_i - bD_i = \sum_{\{ k \mid b_k < b \}} b_k B_{i,k} $, and hence it terminates by induction. It remains to prove Claim \ref{claim:nocontain}.
		
		\begin{proof}[Proof of Claim \ref{claim:nocontain}]
			We first show that eventually no flipped curve is contained in the strict transforms $ D_i $ of $D_1$. Note that whenever a flipped curve $ C_{i+1} \subseteq X_{i+1} $ is contained in $ \Supp D_{i+1} $, then it holds that 
			$$ d(X_i,B_i+M_i) > d(X_{i+1},B_{i+1}+M_{i+1}) . $$
			Indeed, using notation and arguing as in the second paragraph of the proof, we have
			\begin{align*}
				a(E_{i+1},X_i,B_i+M_i) &<  a(E_{i+1},X_{i+1},B_{i+1}+M_{i+1}) \\
				&= 2 - \sum_k n_k b_k - \sum_j m_j\mu_j
			\end{align*}
			for some non-negative integers $ n_k = \mult_{C_{i+1}} B_{i+1,k} $ and $m_j$ such that
			\[ \xi := \sum_k n_k b_k + \sum_j m_j\mu_j \geq \sum_{\{k\mid b_k=b\}} n_k b_k \geq b  , \]
			which yields the assertion. Since $ d(X_1,B_1+M_1) < + \infty $ and since the difficulty takes values in $ \Z_{\geq 0} $, this situation can occur only finitely many times in the given sequence of flips over $ Z $. In other words, after finitely many flips, no flipped curve is contained in the strict transforms $ D_i $ of $D_1$; by relabelling the given sequence of flips, we may assume that this holds for all $ i \geq 1 $.
			
			We prove next that eventually no flipping curve is contained in the strict transforms $ D_i $ of $D_1$. To this end, fix $ i \geq 1 $ and consider the corresponding step
			\begin{center}
				\begin{tikzcd}[column sep = 2em]
					(X_i,B_i+M_i) \arrow[dr, "\theta_i" swap] \arrow[rr, "\pi_i", dashed] 
					&& (X_{i+1},B_{i+1}+M_{i+1}) \arrow[dl, "\theta_i^+"] \\
					& Z_i 
				\end{tikzcd}
			\end{center}
			of the given sequence of flips over $ Z $. We will construct below the following commutative diagram:
			\begin{center}
				\begin{tikzcd}[column sep = 1.75em]
					D_i^\nu \arrow[d, "\nu_i" swap] \arrow[ddrr, "\varphi_i" swap, shorten >= 1.5em] \arrow[rrrr, "g_i \, = \, \psi_{i+1}^{-1} \, \circ \ \varphi_i"] & & & & D_{i+1}^\nu \arrow[d, "\nu_{i+1}"] \arrow[ddll, "\psi_{i+1}", shorten >= 1.5em] \\ [0.5em]
					D_i \arrow[ddrr, "\theta_i |_{D_i}" swap, bend right = 25pt] & & & & D_{i+1} \arrow[ddll, "\theta_i^+ |_{D_{i+1}}", bend left = 25pt] \\
					&& \left( \theta_i (D_i) \right)^\nu = \left( \theta_i^+ (D_{i+1}) \right)^\nu \arrow[d] \\ [0.5em]
					&& \theta_i (D_i) = \theta_i^+ (D_{i+1})
				\end{tikzcd}
			\end{center}
			Let $ \nu_i \colon D_i^\nu \to D_i $ be the normalization of $D_i$ and let $ \varphi_i \colon D_i^\nu \to \left( \theta_i (D_i) \right)^\nu $ be the morphism between the normalizations induced by $ \nu_i \circ \theta_i |_{D_i} $. Since $ D_{i+1} $ contains no flipped curves, the restriction $ \theta_i^+ |_{D_{i+1}} \colon D_{i+1} \to \theta_i^+ (D_{i+1}) $ of $ \theta_i^+ $ to $ D_{i+1} $ is an isomorphism away from finitely many points, so it is a finite birational morphism. Thus, the composite $ \nu_{i+1} \circ \theta_i^+ |_{D_{i+1}} $ is a finite birational morphism and it induces a morphism $ \psi_{i+1} \colon D_{i+1}^\nu \to \left( \theta_i^+ (D_{i+1}) \right)^\nu $. Clearly, the map $ \psi_{i+1} $ is birational, and it is also finite by construction, as it contracts no curves. Therefore, $ \psi_{i+1} $ is an isomorphism, since $ \left( \theta_i^+ (D_{i+1}) \right)^\nu $ is normal. Observe now that $ \theta_i (D_i) = \theta_i^+ (D_{i+1}) $, since $ \pi_i $ is an isomorphism in codimension one. Hence, we obtain a projective birational morphism $ g_i := \psi_{i+1}^{-1} \circ \varphi_i \colon D_i^\nu \to D_{i+1}^\nu $, which, by construction, can only contract curves $ \gamma \subseteq D_i^\nu $ whose image $ \nu_i(\gamma) \subseteq D_i $ is a flipping curve contained in $ \Supp D_i $.
			
			Consider the minimal resolution $ q_1 \colon D_1^\mu \to D_1^\nu $ of $ D_1^\nu $ and for each $ i \geq 1 $ the induced map $ \zeta_i := g_{i-1} \circ \dots \circ g_1 \circ q_1 \colon D_1^\mu \to D_i^\nu $, which is in particular a resolution of $ D_i^\nu $. By \cite[Lemma 1.6(2)]{AHK07} we obtain a non-increasing sequence of positive integers $ \{ \rho_i \}_{i=1}^{+\infty} $, where
			$$ \rho_i := \rho \big( D_1^\mu/Z \big) - \# \{ \text{exceptional prime divisors of } \zeta_i \} , \ i \geq 1 . $$
			Since $ \rho_1 < + \infty $, the sequence $ \{ \rho_i \}_{i=1}^{+\infty} $ stabilizes. By construction of $ g_i $, this implies that $D_i$ contains no flipping curves for sufficiently large $ i $. In other words, after finitely many flips no flipping curve is contained in the strict transforms of $D_1$, as claimed.
		\end{proof}
		
		\noindent This completes the proof.
	\end{proof}
	
	We thank Jingjun Han for showing us the following result. We provide a detailed proof for the benefit of the reader.
	
	\begin{lem}\label{lem:length}
		Let $X$ be a normal quasi-projective variety of dimension $3$ and let $S$ be a prime divisor on $X$ such that $K_X+S$ is $\Q$-Cartier. If there exists a contraction $h \colon S \to T$ to a variety $T$ with $\dim T \le1$, then there exists a curve $C \subseteq S$ which is contracted by $ h $ and such that $(K_X+S) \cdot C\geq -3.$
	\end{lem}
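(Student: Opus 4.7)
The plan is to reduce the statement to Mori's classical length bound for extremal rays on smooth projective surfaces. Take the normalization $\nu \colon S^\nu \to S$ and the minimal resolution $\mu \colon \tilde S \to S^\nu$, and set $f := \nu \circ \mu$ and $\tilde h := h \circ f \colon \tilde S \to T$. By the effectivity of the different in adjunction, $\nu^*\big((K_X+S)|_S\big) = K_{S^\nu} + \mathrm{Diff}$ with $\mathrm{Diff} \geq 0$, and by the Negativity lemma applied to the minimal resolution, $\mu^* K_{S^\nu} - K_{\tilde S} \geq 0$ (the minimal resolution has non-positive discrepancies). Combining these two relations yields the key decomposition
\[ D := f^*\big((K_X+S)|_S\big) = K_{\tilde S} + F \]
for some effective $\Q$-divisor $F$ on the smooth projective surface $\tilde S$.

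It suffices to exhibit a curve $\tilde C \subset \tilde S$ that is contracted by $\tilde h$, is not $f$-exceptional, is not contained in $\Supp F$, and satisfies $K_{\tilde S} \cdot \tilde C \geq -3$. Indeed, then $D \cdot \tilde C = K_{\tilde S} \cdot \tilde C + F \cdot \tilde C \geq -3$, and the image $C := f(\tilde C) \subset S$ is a curve contracted by $h$ satisfying $(K_X+S) \cdot C = \tfrac{1}{d} D \cdot \tilde C \geq -3$ by the projection formula, where $d \geq 1$ denotes the degree of $f|_{\tilde C}$. When $\dim T = 1$, I will take $\tilde C$ to be a general fiber of $\tilde h$: it is smooth, has self-intersection zero, and thus $K_{\tilde S} \cdot \tilde C = 2g(\tilde C) - 2 \geq -2$; generality ensures that it avoids the finite $f$-exceptional locus and $\Supp F$.

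When $\dim T = 0$, every curve on $\tilde S$ is automatically contracted by $\tilde h$, so I will run a $K_{\tilde S}$-MMP on the smooth projective surface $\tilde S$; in dimension two this terminates either with a minimal model on which $K$ is nef, or with a Mori fiber space (a Hirzebruch surface or $\mathbb{P}^2$). Denote by $\sigma \colon \tilde S \to \tilde S_{\mathrm{end}}$ the resulting composition of $(-1)$-contractions, and set $D_{\mathrm{end}} := \sigma_* D = K_{\tilde S_{\mathrm{end}}} + F_{\mathrm{end}}$ with $F_{\mathrm{end}} := \sigma_* F \geq 0$. I will pick $\tilde C_{\mathrm{end}}$ on $\tilde S_{\mathrm{end}}$ from a moving family---a general very ample section when $K$ is nef, a general ruling when the end model is a Hirzebruch surface, or a general line when $\tilde S_{\mathrm{end}} = \mathbb{P}^2$---so that by Mori's classification of $K$-negative extremal rays in dimension two we have $K_{\tilde S_{\mathrm{end}}} \cdot \tilde C_{\mathrm{end}} \geq -3$ in all three cases. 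Choosing $\tilde C_{\mathrm{end}}$ sufficiently general to avoid $\Supp F_{\mathrm{end}}$ together with the finitely many points on $\tilde S_{\mathrm{end}}$ which are images of the $\sigma$-exceptional curves, its strict transform $\tilde C$ on $\tilde S$ is disjoint from the $\sigma$-exceptional locus, and hence $D \cdot \tilde C = D_{\mathrm{end}} \cdot \tilde C_{\mathrm{end}} \geq -3$; generality also ensures that $\tilde C$ is not $f$-exceptional and not a component of $\Supp F$.

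The main obstacle is securing the decomposition $D = K_{\tilde S} + F$ with $F$ effective. This relies on the effectivity of the different along a prime divisor of a normal threefold with $K_X + S$ being $\Q$-Cartier (a nontrivial adjunction-theoretic result in the Koll\'ar--Shokurov framework), together with the non-positivity of the discrepancies of the minimal resolution of a normal surface. Once this decomposition is in hand, the remaining argument is essentially a direct application of Mori's cone theorem on smooth projective surfaces combined with the moving-family observation that lets one descend the intersection bound from $\tilde S_{\mathrm{end}}$ through $\tilde S$ back to $S$.
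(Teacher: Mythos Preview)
Your approach is essentially the same as the paper's: normalize and take the minimal resolution, write $f^*\big((K_X+S)|_S\big)=K_{\tilde S}+F$ with $F\ge 0$ via adjunction and the non-positivity of minimal-resolution discrepancies (the paper phrases this carefully using Mumford pullback, since $K_{S^\nu}$ need not be $\Q$-Cartier), run a $K_{\tilde S}$-MMP (the paper does this uniformly over $T$ rather than splitting on $\dim T$), and push a general member of a covering family back down to $S$. Two small fixes: when $\dim T=1$ the general fiber of $\tilde h$ need not be connected, so Stein-factorize first or take an irreducible component; and the $f$-exceptional locus is typically $1$-dimensional, so you cannot demand that a general $\tilde C$ avoid it or $\Supp F$ entirely --- what you actually need, and what does hold for a general member, is only that $\tilde C$ is not $\mu$-exceptional and is not contained in $\Supp F$.
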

	
	\begin{proof}
		Let $ \nu \colon S^{\nu} \to S $ be the normalization of $S$ and let $f \colon S' \to S^{\nu}$ be the minimal resolution of $S^{\nu}$. By adjunction there exists an effective $\Q$-divisor $\operatorname{Diff}_{S^\nu}(0)$ on $S^\nu$ such that $ (K_X+S) |_{S^\nu} \sim_\Q K_{S^{\nu}}+\operatorname{Diff}_{S^\nu}(0)$ (see \cite[Section 4.1]{Kol13}). Consider the divisor $ f^* \big( K_{S^{\nu}}+\operatorname{Diff}_{S^\nu}(0) \big) $, where the \emph{pullback} is taken \emph{in the sense of Mumford} (see \cite[Remark 4-6-3(i)]{Mat02}). Since this pullback is linear and respects effectivity (see \cite[Section II(b)]{Mum61}), we have $ f^* \big( K_{S^{\nu}}+\operatorname{Diff}_{S^\nu}(0) \big) \geq f^*(K_{S^{\nu}}) $. Moreover, by \cite[Theorem 4-6-2]{Mat02} we deduce that $ f^*(K_{S^{\nu}}) \geq K_{S'} $. Therefore, the divisor $E:= f^* \big( K_{S^{\nu}}+\operatorname{Diff}_{S^\nu}(0) \big) - K_{S'} $ is effective.
		
		We may run a $K_{S'}$-MMP over $T$ which terminates either with a good minimal model $ S'' $ of $ S' $ over $T$ and associated Iitaka fibration $ S'' \to Z'' $ over $ T $ or with a Mori fiber space $S''\to Z''$ over $T$; we denote by $ g \colon S' \to S'' $ the induced morphism over $T$. Note that in the first case $ S'' $ is covered by curves $C''$ which are contracted over $T$ and satisfy $K_{S''} \cdot C'' \geq 0$, while in the second case $S''$ is covered by curves $C''$ which are contracted over $T$ and satisfy $ -3 \leq K_{S''} \cdot C'' < 0 $.
		\begin{center}
			\begin{tikzcd}[column sep = large]
				S' \arrow[d, "f" swap] \arrow[rr, "g"] & & S'' \arrow[ddl] \arrow[dr] \\
				S^\nu \arrow[d, "\nu" swap] & & & Z'' \arrow[dll] \\
				S \arrow[r, "h"] & T 
			\end{tikzcd}
		\end{center}
		Pick a point $ s'\in S' \setminus \big( \Supp E \cup \Exc(\nu \circ f) \cup \Exc(g) \big) $ and note that $S$, $S'$ and $S''$ are isomorphic to each other near $s'$. By the above we may find a curve $ C'' \subseteq S'' $ passing through $ s'' := g(s') $ which is contracted over $T$ and satisfies $K_{S''} \cdot C'' \geq -3$. Therefore, there exists a curve $C' \subseteq S'$ passing through $s'$ such that $g(C')=C''$ and whose image $ C := (\nu \circ f) (C') $ is a curve on $ S $ which is contracted by $h$, since $ s' \notin \Exc(\nu \circ f) $ and $ C'' $ is contracted over $ T $. By construction and by the projection formula we obtain
		\[ (K_X+S) \cdot C = (K_X+S) \cdot (\nu \circ f)_* C'= (K_{S'}+E) \cdot C' \geq K_{S'} \cdot C' . \]
		Furthermore, we may write $ K_{S'} = g^* K_{S''} + F $, where $ F $ is an effective $ g $-exceptional divisor. Note that $ C' \nsubseteq \Supp F $, since $ s' \notin \Exc(g) $. Thus, by the projection formula we obtain
		$$ K_{S'} \cdot C' = ( g^* K_{S''} + F ) \cdot C' \geq g^* K_{S''} \cdot C = K_{S''} \cdot C'' \geq -3 . $$
		It follows that the curve $ C $ has the desired properties.
	\end{proof}
	
	\begin{prop}\label{prop:bddci}
		Let $n$ be a non-negative integer and let $\varepsilon$ be a positive real number. Then there exists a positive integer $I$ depending only on $n$, $\varepsilon$ and satisfying the following.
		
		Assume that $(X/Z,B+M)$ is a $3$-dimensional $\Qq$-factorial klt g-pair with data $X'\xto X\to Z$ and $M'$ such that
		\begin{enumerate}
			\item there are exactly $ n $ exceptional divisorial valuations $ E $ over $X$ with $  a(E,X,B+M) \le 1 $, and
			
			\item $ a(E,X,B+M)\ge 1+\varepsilon$ for any exceptional divisorial valuation $E$ over $ X $ with $ a(E,X,B+M)>1$.
		\end{enumerate}
		Then $ID$ is Cartier for any Weil divisor $D$ on $X$.
	\end{prop}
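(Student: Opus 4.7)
The plan is to pass to a $\Q$-factorial terminalization $h\colon(Y,\Delta+\Xi)\to(X,B+M)$ (available since $(X,B+M)$ is a klt g-pair, by the terminalization lemma in the generalized pairs subsection), bound its local Cartier index by invoking Proposition \ref{prop:finitelogdiscrepancy}, and then transfer the bound back to $X$. By hypothesis (i), the $h$-exceptional prime divisors are precisely $E_1,\dots,E_N$, the $N$ exceptional divisorial valuations over $X$ with log discrepancy at most $1$. Any exceptional divisorial valuation $F$ over $Y$ remains exceptional over $X$; since $(Y,\Delta+\Xi)$ is terminal and the crepant relation $K_Y+\Delta+\Xi\sim_\R h^*(K_X+B+M)$ gives $a_\ell(F,Y,\Delta+\Xi)=a_\ell(F,X,B+M)$, hypothesis (ii) forces $a_\ell(F,Y,\Delta+\Xi)\ge 1+\varepsilon$. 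Using $\Delta\ge 0$ and that adding the nef part only decreases log discrepancies (Negativity lemma, as invoked in Lemma \ref{lem:blup}), this upgrades to $a_\ell(F,Y,0)\ge 1+\varepsilon$ for every exceptional divisorial valuation $F$ over $Y$. Proposition \ref{prop:finitelogdiscrepancy} (the g-pair analogue of \cite[Lemma 4.4.1]{Sho96}, whose proof rests on the classification of $3$-dimensional terminal singularities) then produces a positive integer $r=r(\varepsilon)$ such that $rD_Y$ is Cartier for every Weil divisor $D_Y$ on $Y$.

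To transfer this bound from $Y$ to $X$, fix a closed point $x\in X$, choose an affine neighborhood $U\ni x$, and set $V:=h^{-1}(U)$. The local class group $\mathrm{Cl}(X,x)$ is finite (since $X$ is $\Q$-factorial klt), and the pushforward induces a surjection $h_*\colon\mathrm{Cl}(V)\twoheadrightarrow\mathrm{Cl}(X,x)$ whose kernel is generated by the classes of those $E_i$ meeting $h^{-1}(x)$ (at most $N$ of them). The exponent of the torsion part of $\mathrm{Cl}(V)$ divides $r$ by localization at the (isolated) terminal singular points of $Y$ inside $V$, whereas the free part of $\mathrm{Cl}(V)$ has rank bounded by $N$ (the relative Picard rank of $V/U$) and is, up to bounded index, spanned by the $[E_i]$, hence annihilated in the quotient. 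A straightforward bookkeeping then yields a positive integer $I=I(N,\varepsilon)$ such that $ID$ is Cartier on $X$ for every Weil divisor $D$.

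The main obstacle is Proposition \ref{prop:finitelogdiscrepancy}: bounding the Cartier index of a $3$-fold terminal germ in terms of its minimal log discrepancy is the technically delicate step, resting on the classification of terminal threefold singularities. The transfer from $Y$ to $X$ is then essentially bookkeeping with the bounded-rank exceptional lattice and the uniformly bounded exponent of the torsion of local class groups.
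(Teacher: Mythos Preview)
Your proposal has two genuine gaps.

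\textbf{First, the citation is circular and wrong.} You invoke Proposition~\ref{prop:finitelogdiscrepancy} to bound the Cartier index of $Y$, but that proposition says nothing about Cartier indices: it asserts that log discrepancies $\le 1$ lie in a finite set, and its proof in the paper \emph{uses} Proposition~\ref{prop:bddci}. What you actually need for the terminal variety $Y$ is the $N=0$ base case of the present proposition, which the paper handles via \cite[Corollary~5.2]{Kaw88} (on a terminal $3$-fold every Weil divisor becomes Cartier after multiplying by the index of $K_Y$) together with \cite[Appendix, Theorem]{Sho92} (producing an exceptional valuation with $a_\ell(E,Y,0)=1+\tfrac{1}{p}$, forcing $p\le 1/\varepsilon$). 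This is fixable, but you should cite those results, not Proposition~\ref{prop:finitelogdiscrepancy}.

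\textbf{Second, and more seriously, the transfer from $Y$ to $X$ is not bookkeeping.} Concretely, for a Weil divisor $D$ on $X$ one writes $h^*D = D_Y + \sum q_i E_i$ with $q_i\in\Q$; knowing that $rD_Y$ and $rE_i$ are Cartier, one needs the denominators of the $q_i$ to be bounded in terms of $N,\varepsilon$. In your class-group language, the exact sequence $\Z^k \to \mathrm{Cl}(V) \to \mathrm{Cl}(X,x)\to 0$ together with the bound on the exponent of $\mathrm{Cl}(V)/\mathrm{Pic}(V)$ does \emph{not} bound $|\mathrm{Cl}(X,x)|$: the index of the sublattice $\langle E_i\rangle$ inside the free part of $\mathrm{Cl}(V)$ is exactly the quantity you are trying to control, and nothing in your argument bounds it. The paper supplies the missing geometric input: it proceeds by induction on $N$, extracting a single $E$ via an \emph{extremal} contraction $h\colon Y\to X$ (so $\rho(Y/X)=1$), and then uses Lemma~\ref{lem:length} to find a curve $C\subseteq E$ with $(K_Y+E)\cdot C\ge -3$; combined with $a\ge\varepsilon$ this yields $-E\cdot C\le 3/\varepsilon$, hence $q = (D_Y\cdot C)/(-E\cdot C)$ has denominator bounded by $\lfloor 3I_1'/\varepsilon\rfloor!$. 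This length-of-extremal-ray bound is the heart of the proof, and your sketch omits any substitute for it.
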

	
	\begin{proof}
		We will prove the statement by induction on $n$.
		
		Assume first that $n=0$ and let $ (X,B+M) $ be a $ \Q $-factorial terminal g-pair such that $ a(E,X,B+M)\ge 1+\varepsilon$ for any exceptional divisorial valuation $E$ over $ X $. Then $(X,0)$ is terminal. Let $p$ be the \emph{index} of $X$, i.e., $ p $ is the smallest positive integer such that $pK_X$ is Cartier, and let $ D $ be a Weil divisor on $ X $. By \cite[Corollary 5.2]{Kaw88}, $pD$ is Cartier. Since there is nothing to prove if $ p = 1 $, we assume that $ p > 1 $. By \cite[Appendix by Y.\ Kawamata, Theorem]{Sho92} we may find an exceptional divisorial valuation $E$ over $ X $ such that $ a(E,X,0)=1+\frac{1}{p}$, and by assumption we infer that $p \leq \frac{1}{\varepsilon}$. We conclude that $I:=\lfloor \frac{1}{\varepsilon}\rfloor!$ has the required property.
		
		Assume now that the statement holds for all integers $ k $ with $ 0 \le k \le n-1$ and let $ (X,B+M) $ be a $\Q$-factorial klt g-pair such that there are exactly $ n \geq 1 $ exceptional divisorial valuations $ E $ over $X$ with $  a(E,X,B+M) \le 1 $ and $ a(E,X,B+M)\ge 1+\varepsilon$ for any exceptional divisorial valuation $E$ over $ X $ with $ a(E,X,B+M)>1$. Let $D$ be a Weil divisor on $X$. To prove the statement, we distinguish two cases: either all these $ n $ exceptional divisorial valuations $E$ over $ X $ have log discrepancy $  a(E,X,B+M) = 1 $ or at least one of them has log discrepancy $  a(E,X,B+M) \in (0,1) $. We treat these cases separately below.
		
		\medskip
		
		\textbf{Case I}: There exists an exceptional divisorial valuation $E$ over $ X $ with log discrepancy $ a:= a(E,X,B+M) \in (0,1) $.
		
		By \cite[Lemmata 4.5 and 4.6]{BZ16} there exist a $ \Q $-factorial klt g-pair $ (Y,\Delta+\Xi) $ with data $ X' \overset{g}{\to} Y \to Z $ and $ M' $ and an extremal contraction $ h \colon Y \to X$ with exceptional prime divisor $ E $ such that
		\begin{equation} \label{eq:univ_1}
			K_Y+\Delta+\Xi \sim_\R h^* (K_X+B+M) .
		\end{equation}
		Here, $ \Delta = B_Y + (1-a) E $, where $ B_Y $ is the strict transform of $ B $ on $ Y $. Possibly replacing $X'$ with a higher model, we may assume that $f = h \circ g$ and that $ g $ is a log resolution of $ (Y,\Delta) $. Also, we denote by $ E' $ the strict transform of $E$ on $X'$ and we note that $ E' $ is an $ f $-exceptional divisor. 
		
		Next, we may write
		\begin{equation} \label{eq:univ_2}
			h^*D \sim_\Q D_Y+qE \ \text{ for some } q \in \Q_{\geq 0} ,
		\end{equation}
		where $D_Y$ is the strict transform of $D$ on $Y$, and
		\begin{equation} \label{eq:univ_3}
			h^*M \sim_\R \Xi+rE \ \text{ for some } r \in \R_{\ge 0} .
		\end{equation}
		Indeed, by \cite[Remark 4.2(3)]{BZ16} and since $ f = h \circ g $, we know that $ f^* M \sim_\R M' + E_f $, where $ E_f $ is an effective $ f $-exceptional $\Rr$-divisor which can be expressed as  $ E_f = \sum q_k E_k' + r E' $, where $ q_k , \, r \in \R_{\geq 0} $ and the $ E_k' $ are $ g $-exceptional prime divisors. We obtain \eqref{eq:univ_3} pushing this relation down to $ Y $ by $ g $. Now, by applying the inductive hypothesis to the g-pair $ (Y,\Delta+\Xi) $, we deduce that there exists a positive integer $I_1'$ depending only on $n$, $\varepsilon$ and such that $I_1'D_Y$ and $I_1'E$ are Cartier.
		
		\begin{claim} \label{claim:univ}
			There exists a positive integer $I_1$ divisible by $I_1'$ and depending on $n$ and $\varepsilon$ such that $q I_1 \in I_1' \Z$.
		\end{claim}
		
		\noindent Grant this for the time being. Then $h^*(I_1D) \sim_\Q I_1D_Y+(qI_1)E$ is Cartier, and thus $I_1D$ itself is Cartier by the Cone theorem, see \cite[Theorem 3.25(4)]{KM98}. Hence, $I_1$ has the required property. It remains to prove Claim \ref{claim:univ} in order to complete the proof in this case.
		
		\begin{proof}[Proof of Claim \ref{claim:univ}]
			First, we show that $a \geq \varepsilon$. Indeed, we may write
			$$ K_{X'}+B'+M' \sim_\R f^*(K_X+B+M) $$ 
			for some $ \R $-divisor $ B' $ on $ X' $. Note that $ \mult_{E'} B' = 1-a $, where $ E' $ is the strict transform of the $ h $-exceptional divisor $E$ on $X'$. If $ \phi \colon W \to X'$ is the blow-up of $X'$ along a general curve $ \gamma $ on $E'$ and if $E_W$ is the irreducible component of the exceptional divisor of $ \phi $ which dominates $ \gamma $, then by \cite[Lemma 2.29]{KM98} and by construction we obtain
			\begin{align*}
				a(E_W,X,B+M) &=  a(E_W,X',B'+M') =  a(E_W,X',B') \\
				&= 2-(1-a) = 1+a .
			\end{align*} 
			Since $ a(E_W,X,B+M) = 1+a > 1 $, we infer that $ a \geq \varepsilon $.
			
			Furthermore, by Lemma \ref{lem:length} there exists a curve $ C \subseteq E $ which is contracted by $ h $ and such that $ (K_Y+E) \cdot C \geq -3$. Note also that $ E \cdot C < 0 $, see the proof of \cite[Proposition 2.5]{KM98}.
			Moreover, \eqref{eq:univ_2} and \eqref{eq:univ_3} yield 
			$$ q= - \frac{D_Y \cdot C}{E \cdot C} \ \text{ and } \ \Xi \cdot C=-rE \cdot C \geq 0 . $$
			Recall now that $ K_Y+ B_Y + (1-a) E +\Xi \sim_\R h^* (K_X+B+M) $, see \eqref{eq:univ_1}. 
			Since $C \nsubseteq \Supp B_Y$, we have 
			$(K_Y+B_Y+E+\Xi)\cdot C \geq -3$, and since $ a \geq \varepsilon > 0 $, we obtain
			$$ -E\cdot C=\frac{-(K_Y+B_Y+E+\Xi)\cdot C}{a}\le\frac{3}{\varepsilon} . $$
			Since
			$$q=\frac{D_Y\cdot C}{-E\cdot C}=\frac{I_1'(D_Y\cdot C)}{-I_1'(E\cdot C)} , $$
			where $ I_1'(D_Y\cdot C) $ and $ -I_1'(E\cdot C) $ are integers, and since $ \varepsilon \leq a < 1 $, the integer $I_1 := \lf\frac{3I_1'}{\varepsilon}\rf!$ satisfies the required properties.
		\end{proof}
		
		\textbf{Case II}: Each one of the $ n $ exceptional divisorial valuations $E_1, \dots, E_n$ over $ X $ has log discrepancy $  a(E_j,X,B+M) = 1 $, where $ 1 \leq j \leq n $.
		
		Recall that $ a(E,X,B+M)\geq 1+\varepsilon$ for any exceptional divisorial valuation $E$ over $ X $ with $ a(E,X,B+M)>1$. Consider an effective ample over $ Z $ $ \R $-divisor $ H $ on $ X $ with sufficiently small coefficients which contains the center on $ X $ of every valuation $ E_j $ so that $ a \big( E_j,X, (B+H)+M \big) < 1$ for every $ 1 \leq j \leq n $ and any other exceptional divisorial valuation $E$ over $ X $ has $ a \big(E,X, (B+H)+M \big) \geq 1+\frac{\varepsilon}{2}$, see \cite[Lemma 2.5]{Kol13}. Moreover, by \cite[Remark 4.2(2)]{BZ16} the g-pair $\big(X, (B+H)+M \big)$ is klt. We may now repeat verbatim the proof from Case I, working with the g-pair $\big(X, (B+H)+M \big)$ instead, and derive thus an integer $ I_2 $ with the desired properties.
		
		\medskip
		
		Finally, note that the integer $ I_2 $ obtained in Case II is possibly different from the integer $ I_1 $ obtained in Case I. Then the integer $ I := I_1 \cdot I_2 $ satisfies the conclusion of the statement. This concludes the proof.
	\end{proof}
	
	\begin{prop}\label{prop:finitelogdiscrepancy}
		Let $n$ be a non-negative integer, let $\varepsilon$ be a positive real number and let $\Gamma$ be a finite set of non-negative real numbers. Then there exists a finite set $\mathcal{LD}\subseteq[0,1]$ depending only on $n$, $\varepsilon$, $\Gamma$ satisfying the following.
		
		Assume that $(X,B+M)$ is a $3$-dimensional $\Qq$-factorial NQC klt g-pair with data $X'\xto X\to Z$ and $M'=\sum\mu_j M_j'$ such that
		\begin{enumerate}
			\item $B \in \Gamma$,
			
			\item $\mu_j\in\Gamma$ and $M_j'$ is a Cartier divisor which is nef over $ Z $ for any $j$,
			
			\item there are exactly $ n $ exceptional divisorial valuations $ E $ over $X$ with $  a(E,X,B+M) \leq 1 $, and
			
			\item $ a(E,X,B+M)\geq 1+\varepsilon$ for any exceptional divisorial valuation $E$ over $ X $ with $ a(E,X,B+M)>1$.
		\end{enumerate}
		Then for any divisorial valuation $F$ over $ X $ with $a(F,X,B+M)\le1$ it holds that $a(F,X,B+M) \in \mathcal{LD}$.
	\end{prop}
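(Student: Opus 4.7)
\noindent\emph{Plan.} The approach combines Proposition \ref{prop:bddci}, applied both to $(X,B+M)$ and to its $\Q$-factorial terminalization, with the intersection-theoretic Lemma \ref{lem:length} to reduce the problem to a linear system whose coefficients and right-hand sides range over an explicitly finite set.

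By Proposition \ref{prop:bddci} there exists a positive integer $I=I(N,\varepsilon)$ such that every Weil divisor on $X$ has Cartier index dividing $I$. A divisorial valuation $F$ with $a_\ell(F,X,B+M)\le 1$ is either a prime divisor of $X$, giving $a_\ell(F,X,B+M)=1-b_i\in\{1-\gamma:\gamma\in\Gamma\cap[0,1]\}$ (a finite set depending only on $\Gamma$), or it is one of the $N$ exceptional divisorial valuations $E_1,\ldots,E_N$ with $a_k:=a_\ell(E_k,X,B+M)\in(0,1]$. For the exceptional case I form the $\Q$-factorial terminalization $h\colon(Y,\Delta+\Xi)\to(X,B+M)$ extracting exactly the $E_k$, so each $E_k$ is a prime divisor of $Y$ with $\mult_{E_k}\Delta=1-a_k$. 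Every exceptional divisorial valuation over $Y$ is also exceptional over $X$ with log discrepancy $\ge 1+\varepsilon$ by hypothesis (iv) and crepancy, so Proposition \ref{prop:bddci} with $N=0$ applies to $(Y,\Delta+\Xi)$ and yields a positive integer $I'=I'(\varepsilon)$ such that every Weil divisor on $Y$ has Cartier index dividing $I'$.

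For each $k$, Lemma \ref{lem:length} applied to $E_k\subset Y$ and the contraction $h|_{E_k}\colon E_k\to h(E_k)$ (whose target has dimension $\le 1$) provides a curve $C_k\subseteq E_k$ contracted by $h$ with $(K_Y+E_k)\cdot C_k\ge -3$. Intersecting the crepant relation $K_Y+\Delta+\Xi\sim_\R h^*(K_X+B+M)$ with $C_k$, and using that the right side vanishes on contracted curves, yields
\[
(1-a_k)(-E_k\cdot C_k)-\sum_{\ell\ne k}(1-a_\ell)\,E_\ell\cdot C_k \;=\; K_Y\cdot C_k+\sum_i b_i\,(h_*^{-1}B_i)\cdot C_k+\sum_j\mu_j\,(g_*M_j')\cdot C_k .
\]
All intersection numbers above lie in $\tfrac{1}{I'}\Z$; the inequality $(K_Y+E_k)\cdot C_k\ge -3$ together with $I'E_k$ Cartier and the standard negativity of $E_k\cdot C_k$ forces $-E_k\cdot C_k$, $|K_Y\cdot C_k|$ and the $|E_\ell\cdot C_k|$ to be bounded by a constant depending only on $I'$. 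Consequently, the coefficient matrix and right-hand side of the resulting $N\times N$ linear system in $(1-a_1,\ldots,1-a_N)$ range over a finite subset of $\tfrac{1}{I'}\Z+\Gamma\cdot\tfrac{1}{I'}\Z$, and solving this system expresses each $a_k$ as an element of a finite set depending only on $N,\varepsilon,\Gamma$. Taking the union with the prime-divisor contribution gives the required $\mathcal{LD}$.

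The main difficulty is the uniform upper bound on $-E_k\cdot C_k$, $|K_Y\cdot C_k|$ and $|E_\ell\cdot C_k|$ entering the linear system, together with the invertibility of its coefficient matrix (a $Z$-matrix with strictly positive diagonal); both rely on a careful combination of Lemma \ref{lem:length}, the Cartier-index bound $I'$ on $Y$, and the terminality of $(Y,\Delta+\Xi)$.
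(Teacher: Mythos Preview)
Your approach has a genuine gap in the step you yourself flag as the main difficulty. The inequality $(K_Y+E_k)\cdot C_k\ge -3$ from Lemma~\ref{lem:length} does not by itself bound $-E_k\cdot C_k$; one needs to combine it with the crepant relation $(K_Y+\Delta+\Xi)\cdot C_k=0$, which yields $(\Delta+\Xi-E_k)\cdot C_k\le 3$. Writing $h^*M=\Xi+\sum_\ell r_\ell E_\ell$ with $r_\ell\ge 0$, the coefficient of $E_\ell\cdot C_k$ (for $\ell\ne k$) in this expression is $(1-a_\ell)-r_\ell=1-a_\ell(E_\ell,X,B)$, which can be negative: hypothesis~(iv) controls $a_\ell(E_\ell,X,B+M)$, not $a_\ell(E_\ell,X,B)$. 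Thus the cross-terms need not have a definite sign, and the clean estimate $-E\cdot C\le 3/\varepsilon$ from the single-extraction case in Proposition~\ref{prop:bddci} does not extend to $N\ge 2$. For the same reason the numbers $r_{j\ell}$, and hence the right-hand side entries $(g_*M_j')\cdot C_k=-\sum_\ell r_{j\ell}E_\ell\cdot C_k$, are not bounded in terms of $N,\varepsilon,\Gamma$. Finally, a $Z$-matrix with positive diagonal is not automatically invertible without an extra hypothesis such as diagonal dominance, and no such property is verified for your $N\times N$ system.

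The paper sidesteps all of this. After Proposition~\ref{prop:bddci} provides a uniform Cartier index $I_0=I_0(N,\varepsilon)$ on $X$, it invokes \cite[Theorem~1.4]{Chen20} to write
\[
K_X+B+M=\sum_i\alpha_i\bigl(K_X+B^i+M^i\bigr)
\]
as a convex combination where the $\alpha_i$ lie in a finite set $\Gamma_1$ and the coefficients of the $B^i$ and of the $(M^i)'$ lie in a finite set $\Gamma_2\subseteq\Q_{\ge 0}$, both depending only on $\Gamma$. Choosing $I$ with $I\Gamma_2\subseteq I_0\Z$ makes each $I(K_X+B^i+M^i)$ Cartier, so $a_\ell(F,X,B^i+M^i)\in\tfrac{1}{I}\Z_{\ge 0}$ for every valuation $F$, and hence $a_\ell(F,X,B+M)=\sum_i\alpha_i\,a_\ell(F,X,B^i+M^i)$ already lies in the finite set $\bigl\{\tfrac{1}{I}\sum_i\alpha_i n_i:\alpha_i\in\Gamma_1,\ n_i\in\Z_{\ge 0}\bigr\}\cap[0,1]$. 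No terminalization, no curve-counting, and no linear system are needed.
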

	
	\begin{proof}
		Let $ (X,B+M) $ be a g-pair satisfying all the assumptions of the proposition. By Proposition \ref{prop:bddci} there exists a positive integer $I_0$ depending only on $n$ and $\varepsilon$ such that $I_0 D$ is Cartier for any Weil divisor $D$ on $X$. By \cite[Theorem 1.4]{Chen20} there exist a finite set $\Gamma_1$ of positive real numbers and a finite set $\Gamma_2$ of non-negative rational numbers which depend only on $\Gamma$ so that 
		$$ K_X+B+M=\sum \alpha_i(K_X+B^i+M^i), $$
		for some $\alpha_i \in \Gamma_1$ with $\sum \alpha_i=1$, some $B^i \in \Gamma_2$ and some $(M^i)' = \sum_j \mu_{ij} M_j'$ with $ M^i = f_* (M^i)' $ and $\mu_{ij}\in\Gamma_2$ for any $i,j$, and, additionally, each $(X,B^i+M^i)$ is an lc g-pair. Moreover, we may find a positive integer $I$ such that $I \Gamma_2 \subseteq I_0\Z$; in particular, each $I(K_X+B^i+M^i)$ is Cartier. Since for any divisorial valuation $F$ over $ X $ we now have
		$$ a(F,X,B+M)=\sum \alpha_i \, a(F,X,B^i+M^i) , $$
		we conclude that there exists a subset $\mathcal{LD}$ of the finite set
		$$ \left\{ \frac{1}{I} \sum \alpha_i m_i \mid \alpha_i\in\Gamma_1 , \ m_i\in \Z_{\geq 0} \right\} \cap [0,1] $$
		with the desired properties.
	\end{proof}
	
	\begin{thm}\label{thm:gklt}
		Let $(X_1/Z,B_1+M_1)$ be a $3$-dimensional $\Qq$-factorial NQC klt g-pair. Then any sequence of flips over $Z$ starting from $(X_1/Z,B_1+M_1)$ terminates.
	\end{thm}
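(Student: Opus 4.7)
The plan is to reduce termination in the $\Q$-factorial NQC klt case to the already-established $\Q$-factorial NQC terminal case (Theorem \ref{thm:gterminal}) via a family of $\Q$-factorial terminalizations that is compatible along the whole sequence. The lifting machinery is Lemma \ref{lem:lift_klt_term}, and the role of Propositions \ref{prop:klt_discrep} and \ref{prop:finitelogdiscrepancy} is to guarantee, by stabilization arguments, that such a compatible family can be chosen after discarding finitely many initial flips.

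More concretely, suppose for contradiction that we have an infinite sequence of flips $(X_i,B_i+M_i) \dashrightarrow (X_{i+1},B_{i+1}+M_{i+1})$ over $Z$ with intermediate bases $Z_i$, all sharing the fixed data $X' \to X_i \to Z$ and $M'=\sum_j\mu_j M_j'$. By Proposition \ref{prop:klt_discrep} applied to $(X_1,B_1+M_1)$, there is $\varepsilon>0$ such that only finitely many exceptional divisorial valuations $E$ over $X_1$ satisfy $a_\ell(E,X_1,B_1+M_1)<1+\varepsilon$. By Lemma \ref{lem:monotonicity}, log discrepancies are non-decreasing along a flip, so the same $\varepsilon$ works for every $i$, and the set $S_i$ of exceptional divisorial valuations with $a_\ell(E,X_i,B_i+M_i)\le 1$ is finite and satisfies $S_{i+1}\subseteq S_i$. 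Passing to a tail we may assume $S_i=\{E_1,\dots,E_N\}$ is fixed. Rescaling each $M_j'$ so that it is Cartier (possible since it is $\Q$-Cartier), the union of the coefficients of $B_1$ and the resulting $\mu_j$'s is a fixed finite set $\Gamma\subseteq \R_{\geq 0}$, and Proposition \ref{prop:finitelogdiscrepancy} with parameters $(N,\varepsilon,\Gamma)$ provides a finite set $\mathcal{LD}\subseteq[0,1]$ containing every $a_\ell(E_j,X_i,B_i+M_i)$. Since these numbers are non-decreasing in $i$ (by Lemma \ref{lem:monotonicity}) and lie in a finite set, they stabilize; after a further tail, $a_j:=a_\ell(E_j,X_i,B_i+M_i)$ is independent of $i$.

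For each $i$, construct a $\Q$-factorial terminalization $h_i\colon (Y_i,\Delta_i+\Xi_i)\to (X_i,B_i+M_i)$ extracting exactly $E_1,\dots,E_N$, so $\Delta_i=(B_i)_{Y_i}+\sum_{j=1}^N(1-a_j)E_j$ with the coefficients $1-a_j$ the same for every $i$. The induced birational maps $Y_i\dashrightarrow Y_{i+1}$ are isomorphisms in codimension one and, by the constancy of the $a_j$'s, each $\Delta_{i+1}$ is the strict transform of $\Delta_i$. Therefore Lemma \ref{lem:lift_klt_term} applies and produces, for each $i$, a sequence $\rho_i$ of $(K_{Y_i}+\Delta_i+\Xi_i)$-flips over $Z_i$ from $(Y_i,\Delta_i+\Xi_i)$ to $(Y_{i+1},\Delta_{i+1}+\Xi_{i+1})$. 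Each $\rho_i$ is necessarily non-empty, because $K_{Y_i}+\Delta_i+\Xi_i=h_i^*(K_{X_i}+B_i+M_i)$ fails to be nef over $Z_i$ (the divisor $-(K_{X_i}+B_i+M_i)$ is ample over $Z_i$, so pulling back gives a curve in a fiber with negative intersection). Concatenating yields an infinite sequence of flips over $Z$ starting from the $3$-dimensional $\Q$-factorial terminal NQC g-pair $(Y_1,\Delta_1+\Xi_1)$, contradicting Theorem \ref{thm:gterminal}.

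The main obstacle is the uniform stabilization: both $N$ and $\varepsilon$, as well as the coefficient set $\Gamma$, must be the same for the entire infinite sequence so that Proposition \ref{prop:finitelogdiscrepancy} yields a single finite set $\mathcal{LD}$ controlling all log discrepancies simultaneously. This uniformity is delivered for free by Lemma \ref{lem:monotonicity}, which transports the invariants of $(X_1,B_1+M_1)$ along the sequence; once the stabilization is in place, the reduction to Theorem \ref{thm:gterminal} via Lemma \ref{lem:lift_klt_term} is essentially formal.
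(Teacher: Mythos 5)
Your overall strategy coincides with the paper's: use Proposition \ref{prop:klt_discrep} and Lemma \ref{lem:monotonicity} to stabilize the set of exceptional divisorial valuations with log discrepancy at most $1$, use Proposition \ref{prop:finitelogdiscrepancy} to force their log discrepancies to stabilize, and then conclude via Lemma \ref{lem:lift_klt_term} and Theorem \ref{thm:gterminal}. The construction of the compatible terminalizations, the identification $\Delta_i = (h_i)^{-1}_*B_i + \sum_j (1-a_j)E_j$, and the concatenation argument are all as in the paper; your extra remark that each $\rho_i$ is non-empty is correct (and harmless, since the contradiction only needs infinitely many non-trivial $\rho_i$).

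There is, however, one gap in the way you invoke Proposition \ref{prop:finitelogdiscrepancy}. Its hypothesis (iv) requires that \emph{every} exceptional divisorial valuation $E$ with $a_\ell(E,X_i,B_i+M_i)>1$ satisfies $a_\ell(E,X_i,B_i+M_i)\ge 1+\varepsilon$. The $\varepsilon$ produced by Proposition \ref{prop:klt_discrep} only guarantees that \emph{finitely many} valuations have log discrepancy below $1+\varepsilon$; it does not exclude valuations with log discrepancy strictly between $1$ and $1+\varepsilon$, so hypothesis (iv) is not verified as you use it (and the same hypothesis is needed for Proposition \ref{prop:bddci}, on which Proposition \ref{prop:finitelogdiscrepancy} rests, since the bound $p\le 1/\varepsilon$ on the Cartier index depends on it). The repair is exactly the paper's Step 1: after passing to the tail where the set $\{E_1,\dots,E_N\}$ has stabilized at some index $k$, there remain only finitely many valuations $F_1,\dots,F_\nu$ with $a_\ell(F_s,X_k,B_k+M_k)\in(1,1+\varepsilon_0)$; replace $\varepsilon_0$ by $\varepsilon:=\min\{a_\ell(F_s,X_k,B_k+M_k)-1,\ \varepsilon_0\}$ and use Lemma \ref{lem:monotonicity} to propagate the bound to all $i\ge k$. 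With this adjustment your argument matches the paper's proof.
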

	
	\begin{proof}
		Let 
		\begin{center}
			\begin{tikzcd}[column sep = 0.8em, row sep = 1.75em]
				(X_1,B_1+M_1) \arrow[dr] \arrow[rr, dashed, "\pi_1"] && (X_2,B_2+M_2) \arrow[dl] \arrow[dr] \arrow[rr, dashed, "\pi_2"] && (X_3,B_3+M_3) \arrow[dl] \arrow[rr, dashed, "\pi_3"] && \dots \\
				& Z_1 && Z_2
			\end{tikzcd}
		\end{center}
		be a sequence of flips over $ Z $ starting from $ (X_1,B_1+M_1) $. For every $ i \geq 1 $ consider a $ \Q $-factorial terminalization $ h_i' \colon (Y_i', \Delta_i' + \Xi_i') \to (X_i, B_i + M_i) $ of $ (X_i, B_i + M_i) $. To show that the given sequence of flips terminates we proceed in three steps.
		
		\medskip
		
		\textbf{Step 1}: 
		By Proposition \ref{prop:klt_discrep} there exists a positive real number $\varepsilon_0$ such that there are $ \nu_0 \geq 0 $ exceptional divisorial valuations $ E $ over $ X_1 $ with $ a(E,X_1,B_1+M_1)<1+\varepsilon_0$, and $ N_0 $ of them, where $ 0 \leq N_0 \leq \nu_0 $, have log discrepancy $ a(E,X_1,B_1+M_1) \leq 1$. It follows from Lemma \ref{lem:monotonicity} that 
		there exist a positive integer $ k $ and a non-negative integer $N \leq N_0$ such that for every $ i \geq k $ there are exactly $ N $ exceptional divisorial valuations $ E_1, \dots, E_N $ over $X_i$ with log discrepancy $  a(E_j,X_i,B_i+M_i)\le1 $ for every $ 1 \leq j \leq N $; 
		note that $ k $ is chosen as the smallest positive integer with this property. In particular, there are exactly $ \nu $, where $ 0 \leq \nu \leq \nu_0 - N $, exceptional divisorial valuations $ F_1, \dots, F_\nu $ over $X_k$ with $  a(F_s,X_k,B_k+M_k) \in (1,1+\varepsilon_0) $ for every $ 1 \leq s \leq \nu $. Set 
		$$ \varepsilon := \min \big\{  a(F_1,X_k,B_k+M_k) - 1 , \dots, \, a(F_\nu,X_k,B_k+M_k) - 1 , \, \varepsilon_0 \big\} $$
		and note that $ \varepsilon = \varepsilon_0 $ if and only if $ \nu = 0 $. It follows from Lemma \ref{lem:monotonicity} that for every $ i \geq k $ and for every exceptional divisorial valuation $E$ over $X_i$ with $ a(E,X_i,B_i+M_i)>1 $ we actually have $ a(E,X_i,B_i+M_i)\ge1+\varepsilon $. By relabelling the given sequence of flips, we may assume that $ k = 1 $.
		
		\medskip
		
		\textbf{Step 2}:
		By Step 1 and by construction of a $ \Q $-factorial terminalization, each $ h_i' \colon Y_i' \to X_i $ extracts the exceptional divisorial valuations $ E_1, \dots, E_N $ and each $ \Delta_i' $ is given by
		\begin{equation*}
			\Delta_i' = (h_i')^{-1}_* B_i + \sum_{j=1}^N \big( 1 - a(E_j,X_i,B_i+M_i) \big) E_j .
		\end{equation*}
		Hence, each $Y_{i+1}'$ is isomorphic in codimension one to $ Y_i' $. Additionally, 
		by Proposition \ref{prop:finitelogdiscrepancy} there exists a finite set $ \mathcal{V} \subseteq [0,1] $ such that for every $ i \geq 1 $ and for every $ 1 \leq j \leq N $ it holds that $ a(E_j,X_i,B_i+M_i) \in \mathcal{V} $. Thus, by Lemma \ref{lem:monotonicity}, for every $ 1 \leq j \leq N $ we obtain a non-decreasing sequence $ \left\{ a(E_j,X_i,B_i+M_i) \right\}_{i=1}^{+ \infty} $ of elements of the finite set $ \mathcal{V} $, which must therefore stabilize. Hence, there exists a positive integer $ r $ such that 
		$\Delta_{i+1}'$ is the strict transform of $\Delta_i'$ for every $ i \geq r $. By relabelling the given sequence of flips, we may assume that $ r = 1 $.
		
		\medskip
		
		\textbf{Step 3}: By Step 2 we may apply Lemma \ref{lem:lift_klt_term} and we obtain thus a diagram
		\begin{center}
			\begin{tikzcd}[column sep = 0.8em, row sep = large]
				(Y_1,\Delta_1+\Xi_1) \arrow[d, "h_1" swap] \arrow[rr, dashed, "\rho_1"] && (Y_2,\Delta_2+\Xi_2) \arrow[d, "h_2" swap] \arrow[rr, dashed, "\rho_2"] && (Y_3,\Delta_3+\Xi_3) \arrow[d, "h_3" swap] \arrow[rr, dashed, "\rho_3"] && \dots \\ 
				(X_1,B_1+M_1) \arrow[dr] \arrow[rr, dashed, "\pi_1"] && (X_2,B_2+M_2) \arrow[dl] \arrow[dr] \arrow[rr, dashed, "\pi_2"] && (X_3,B_3+M_3) \arrow[dl] \arrow[rr, dashed, "\pi_3"] && \dots \\
				& Z_1 && Z_2
			\end{tikzcd}
		\end{center}
		where the top row yields a sequence of flips over $ Z $ starting from the $ \Q $-factorial NQC terminal g-pair $ (Y_1, \Delta_1 + \Xi_1) $. By Theorem \ref{thm:gterminal} the sequence $ Y_i \dashrightarrow Y_{i+1} $ of flips over $ Z $ terminates, and thus the sequence $ X_i \dashrightarrow X_{i+1} $ of flips over $ Z $ terminates. This concludes the proof.
	\end{proof}
	
	\begin{proof}[Proof of Theorem \ref{thm:terminationforthreefolds}]
		Let 
		\begin{center}
			\begin{tikzcd}[column sep = 0.8em, row sep = 1.75em]
				(X_1,B_1+M_1) \arrow[dr] \arrow[rr, dashed, "\pi_1"] && (X_2,B_2+M_2) \arrow[dl] \arrow[dr] \arrow[rr, dashed, "\pi_2"] && (X_3,B_3+M_3) \arrow[dl] \arrow[rr, dashed, "\pi_3"] && \dots \\
				& Z_1 && Z_2
			\end{tikzcd}
		\end{center}
		be a sequence of flips over $ Z $ starting from the given NQC lc g-pair $ (X_1/Z,B_1+M_1) := (X/Z,B+M) $ of dimension $3$. By Lemma \ref{lem:lift_lc_dlt} we obtain a diagram
		\begin{center}
			\begin{tikzcd}[column sep = 0.8em, row sep = large]
				(Y_1,\Delta_1+\Xi_1) \arrow[d, "h_1" swap] \arrow[rr, dashed, "\rho_1"] && (Y_2,\Delta_2+\Xi_2) \arrow[d, "h_2" swap] \arrow[rr, dashed, "\rho_2"] && (Y_3,\Delta_3+\Xi_3) \arrow[d, "h_3" swap] \arrow[rr, dashed, "\rho_3"] && \dots 
				\\ 
				(X_1,B_1+M_1) \arrow[dr] \arrow[rr, dashed, "\pi_1"] && (X_2,B_2+M_2) \arrow[dl] \arrow[dr] \arrow[rr, dashed, "\pi_2"] && (X_3,B_3+M_3) \arrow[dl] \arrow[rr, dashed, "\pi_3"] && \dots \\
				& Z_1 && Z_2
			\end{tikzcd}
		\end{center}
		where each map $ h_i \colon (Y_i, \Delta_i + \Xi_i) \to (X_i,B_i+M_i) $, $i \geq 1$, is a dlt blow-up of the NQC lc g-pair $ (X_i,B_i+M_i) $ and the top row yields an MMP over $ Z $ for the $3$-dimensional NQC $\Q$-factorial dlt g-pair $ (Y_1,\Delta_1 + \Xi_1) $. Now, to prove the statement, it suffices to show that this $ (K_{Y_1}+\Delta_1+\Xi_1) $-MMP terminates. By relabelling, we may assume that it consists only of flips. Additionally, by the special termination for NQC $\Q$-factorial dlt g-pairs of dimension $3$ and by relabelling again, we may assume that this sequence of flips is also a sequence of flips for the $3$-dimensional NQC $\Q$-factorial klt g-pair $ \big(Y_1,(\Delta_1 - \lfloor \Delta_1 \rfloor) + \Xi_1 \big) $. It follows from Theorem \ref{thm:gklt} that the sequence of $ \big( K_{Y_1} + (\Delta_1 - \lfloor \Delta_1 \rfloor) + \Xi_1 \big) $-flips over $Z$ terminates, which concludes the proof.
	\end{proof}

	\section{Proofs of the Main Results}
	\label{section:term_for_psef}
	
	In this section we prove Theorems \ref{thm:terminationforfourfolds} and \ref{thm:mainconsequence_g}.
	The proof of the latter occupies the largest part of the section and is obtained as follows. First, we derive an analog of \cite[Lemma 3.2]{Bir07} in the context of g-pairs by utilizing Lemma \ref{lem:supplement_g} (see Theorem \ref{thm:mainthm}). In addition, we emphasize that the ACC for lc thresholds for g-pairs \cite[Theorem 1.5]{BZ16}, which plays a crucial role in the proofs of Theorem \ref{thm:mainthm}, \cite[Theorem 1]{Mor18} and \cite[Theorem 1]{HM20}, is invoked here without passing to some open subset of the varieties involved (as is the case in \cite{Bir07}). Afterwards, we generalize \cite[Theorem 1.3]{Bir07} to the setting of g-pairs by combining Lemma \ref{lem:lift_lc_dlt} and Theorem \ref{thm:mainthm}
	(see Corollary \ref{cor:maincor}). Finally, we derive Theorem \ref{thm:mainconsequence_g} directly from Corollary \ref{cor:maincor} due to the fact that the special termination for NQC $\Q$-factorial dlt g-pairs can be reduced to the termination of flips in lower dimensions. Note also that Theorem \ref{thm:mainconsequence_g}, together with \cite[Theorem E]{LT19}, yield a different proof of \cite[Corollary G]{LT19}, which does not depend on \cite{HM20}.
	
	\begin{thm} \label{thm:mainthm}
		Assume the existence of minimal models for smooth varieties of dimension $ d-1 $ and the special termination for NQC lc g-pairs of dimension $ d $.
		
		Let
		\begin{center}
			\begin{tikzcd}[column sep = 0.8em, row sep = 1.75em]
				(X_1,B_1+M_1) \arrow[dr, "\theta_1" swap] \arrow[rr, dashed, "\pi_1"] && (X_2,B_2+M_2) \arrow[dl, "\theta_1^+"] \arrow[dr, "\theta_2" swap] \arrow[rr, dashed, "\pi_2"] && (X_3,B_3+M_3) \arrow[dl, "\theta_2^+"] \arrow[rr, dashed, "\pi_3"] && \dots \\
				& Z_1 && Z_2
			\end{tikzcd}
		\end{center}
		be a sequence of flips over $ Z $ starting from an NQC lc g-pair $ (X_1,B_1+M_1) $ of dimension $ d $, where each g-pair $ (X_i,B_i+M_i) $ in the above sequence of flips comes with data $ X_1' \overset{f_i}{\longrightarrow} X_i \to Z $ and $ M_1' $. Assume that there exist an NQC divisor $ P_1' $ on $ X_1' $ and an effective $ \R $-divisor $ N_1 $ on $ X_1 $ such that $ P_1 + N_1 $ is $ \R $-Cartier and $ -(P_i+N_i) $ is ample over $ Z_i $ for every $ i \geq 1 $, where $ P_i := (f_i)_* P_1' $ and $ N_i $ is the strict transform of $ N_1 $ on $ X_i $. Then the given sequence of flips over $ Z $ terminates.
	\end{thm}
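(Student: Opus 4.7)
The plan is to prove Theorem \ref{thm:mainthm} by adapting the strategy of \cite[Lemma 3.2]{Bir07} to the g-pair setting, with Lemma \ref{lem:supplement_g} playing the role of the key technical input. I first stabilize the lc thresholds: for each $ i \geq 1 $, let $ t_i $ denote the lc threshold of $ P_i + N_i $ with respect to $ (X_i, B_i + M_i) $. By Lemma \ref{lem:monotonicity}, log discrepancies do not decrease under flips, so $ \{ t_i \} $ is non-decreasing, and the ACC for lc thresholds \cite[Theorem 1.5]{BZ16} forces it to stabilize at some value $ t \geq 0 $. After relabeling, we may assume $ t_i = t $ for all $ i $. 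Setting $ B_i' := B_i + t N_i $ and $ M_i' := M_i + t P_i $, the g-pair $ (X_i, B_i' + M_i') $ is NQC lc, and since $ -(K_{X_i} + B_i + M_i) - t(P_i + N_i) $ is ample over $ Z_i $, each original flip $ \pi_i $ is simultaneously a flip for this new g-pair.

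Next, by the assumed special termination in dimension $ d $ applied to the sequence of flips of $ (X_i, B_i' + M_i') $, after further relabeling we may assume $ \Exc(\theta_i) \cap \nklt(X_i, B_i' + M_i') = \emptyset $ for all $ i \geq 1 $. For each $ i $, I pick a dlt blow-up $ h_i \colon (Y_i, \Delta_i + \Xi_i) \to (X_i, B_i' + M_i') $. By Lemma \ref{lem:supplement_g}(i), the corresponding $ (K_{Y_i} + \Delta_i + \Xi_i) $-MMP with scaling over $ Z_i $ consists of flips only, and they are also flips for the $ \Q $-factorial klt g-pair $ (Y_i, \Delta_i^{<1} + \Xi_i) $. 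Using the Negativity lemma, one writes $ h_i^*(P_i + N_i) = Q_i + \Lambda_i + F_i $ with $ \Lambda_i \geq 0 $ and $ \Supp F_i \subseteq \Supp \lfloor \Delta_i \rfloor $, and Lemma \ref{lem:supplement_g}(ii) then gives that these lifted flips are simultaneously $ (Q_i + \Lambda_i) $-flips. Concatenation produces an infinite sequence of flips between $ \Q $-factorial klt g-pairs of dimension $ d $ which are simultaneously $ (Q_i + \Lambda_i) $-flips.

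Finally, I would derive a contradiction using the existence of minimal models in dimension $ d - 1 $. Since $ t $ is the lc threshold, there exists a divisorial lc place of $ (X_i, B_i' + M_i') $ whose center lies in $ \Supp N_i $; its strict transform on $ Y_i $ is a prime divisor $ T_i $ appearing in $ \lfloor \Delta_i \rfloor $. Because the lifted flips avoid $ \Supp \lfloor \Delta_i \rfloor $, the $ T_i $ are identified in codimension one across the lifted sequence. Adjunction on the dlt g-pair induces a compatible sequence of $ (d-1) $-dimensional log structures on the normalizations of the $ T_i $ together with restrictions of the divisors $ Q_i + \Lambda_i $; the existence of minimal models in dimension $ d - 1 $ (and its standard consequences for dlt pairs) lets one construct a minimal model of this induced structure, producing a weak Zariski decomposition that bounds the length of the lifted sequence and contradicts its infiniteness.

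The hard part will be this last step: verifying that the adjunction produces an object of the right type for the existence of minimal models to apply, and then turning the resulting minimal model into a termination statement via a Picard-number or length-of-flips argument in the spirit of \cite{Bir07}. A secondary but non-trivial technical point is checking that the decomposition of $ h_i^*(P_i + N_i) $ used in the second step really satisfies the hypotheses of Lemma \ref{lem:supplement_g}(ii).
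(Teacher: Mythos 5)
Your first two steps track the paper's proof closely: fixing the lc threshold $t_1$ of $P_1+N_1$, observing that each flip is also a flip for the threshold-adjusted g-pair because $-(P_i+N_i)$ is ample over $Z_i$ and $\rho(X_i/Z_i)=1$, invoking special termination, passing to dlt blow-ups, and applying Lemma \ref{lem:supplement_g}(i)--(ii). (Your preliminary stabilization of the thresholds $t_i$ along the sequence is harmless but unnecessary; the paper just works with $t_1$.) The genuine gap is your endgame. Deriving a contradiction by adjunction to a divisorial lc place $T_i\subseteq\lfloor\Delta_i\rfloor$ and then using minimal models in dimension $d-1$ cannot work, precisely because of Lemma \ref{lem:supplement_g}(i): the lifted flips satisfy $\Exc(\theta_i)\cap\Supp\Delta_i^{=1}=\emptyset$, and the flipped loci likewise avoid $\lfloor\Delta_{i+1}\rfloor$, so every $T_i$ sits inside the locus where the maps are isomorphisms. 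Restriction to $T_i$ therefore produces a \emph{constant} $(d-1)$-dimensional structure carrying no information about the flips, and no Picard-number or length-of-flips bound can be extracted from it. (That kind of adjunction argument is the content of special termination itself, which has already been used and exhausted at this stage.)

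What the correct argument does instead is \emph{iterate} the construction of Step 2 to manufacture a strictly increasing sequence of lc thresholds. The decomposition in Lemma \ref{lem:supplement_g}(ii) must be chosen carefully (not just "by the Negativity lemma"): one writes $\Delta_1^{<1}=\Delta_1^{\prime}+t_1\Delta_1^{\prime\prime}$ and $(h_1)_*^{-1}N_1=\Gamma_1+\Delta_1^{\prime\prime}$ with $\Supp\Gamma_1\subseteq\Supp\Delta_1^{=1}$, so that the lifted sequence is simultaneously a sequence of flips for the $\Q$-factorial \emph{klt} g-pair $\big(Y_1,(\Delta_1^{\prime}+t_1\Delta_1^{\prime\prime})+(L_1+t_1Q_1)\big)$ and a sequence of $(Q_1+\Delta_1^{\prime\prime})$-flips. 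The new data $(Y_1,\Delta_1^{\prime}+L_1)$ together with $Q_1+\Delta_1^{\prime\prime}$ then satisfies the original hypotheses, and since the pair at parameter $t_1$ is klt, the lc threshold $t_2$ of $Q_1+\Delta_1^{\prime\prime}$ with respect to $(Y_1,\Delta_1^{\prime}+L_1)$ satisfies $t_2>t_1$. Repeating Steps 1--2 on the (still infinite) lifted sequence yields $t_1<t_2<t_3<\cdots$, contradicting the ACC for lc thresholds \cite[Theorem 1.5]{BZ16}. So the ACC is not a one-off normalization at the start, as in your draft, but the engine that terminates the whole argument, exactly as in \cite{Bir07}.
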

	
	\begin{proof}
		Assume that the given sequence of flips does not terminate. We derive a contradiction by violating the ACC for lc thresholds for g-pairs \cite[Theorem 1.5]{BZ16}.
		
		\medskip
		
		\textbf{Step 1}: Let $ t_1 \geq 0 $ be the lc threshold of $ P_1 + N_1 $ with respect to $ (X_1,B_1+M_1) $. Since $ -(P_i+N_i) $ is ample over $ Z_i $ and since $ \rho(X_i/Z_i) = 1 $, we have $ K_{X_i} + B_i + M_i \equiv_{Z_i} \alpha_i (P_i + N_i) $ for some $ \alpha_i >0 $. Therefore, the given sequence of $ (K_{X_1} + B_1 + M_1) $-flips over $ Z $ is also a sequence of $ \big( K_{X_1} + (B_1 + t_1 N_1) + (M_1 + t_1 P_1) \big) $-flips over $ Z $. By the special termination there exists an integer $ \ell \geq 1 $ such that 
		\begin{equation} \label{eq:mainthm_ST}
			\Exc(\theta_i) \cap \nklt\big(X_i, (B_i + t_1 N_i) + (M_i + t_1 P_i) \big) = \emptyset \ \text{ for every } i \geq \ell .
		\end{equation}
		By relabelling the given sequence of flips, we may assume that $ \ell = 1 $.
		
		\medskip
		
		\textbf{Step 2}: Consider a dlt blow-up
		$$ h_1 \colon (Y_1, \Delta_1 + \Xi_1) \to \big(X_1, (B_1+ t_1 N_1) + (M_1 + t_1 P_1) \big) $$
		of the NQC lc g-pair $ \big(X_1, (B_1+ t_1 N_1) + (M_1 + t_1 P_1) \big) $ such that $ f_1 = h_1 \circ g_1 $, where the g-pair $ (Y_1, \Delta_1 + \Xi_1) $ comes with data $ X_1' \overset{g_1}{\longrightarrow} Y_1 \to Z $ and $ \Xi_1' := M_1' + t_1 P_1' $. Set $ L_1 := (g_1)_* M_1' $ and $ Q_1 := (g_1)_* P_1' $ and note that
		$$ \Xi_1 = (g_1)_* \Xi_1' = L_1 + t_1 Q_1 , \
		(h_1)_* L_1 = M_1 \ \text{ and } \ (h_1)_* Q_1 = P _1 . $$
		By Lemma \ref{lem:lift_lc_dlt} there exists some $ (K_{Y_1} + \Delta_1 + \Xi_1) $-MMP with scaling of an ample divisor over $ Z_1 $ which terminates with a dlt blow-up 
		$$ h_2 \colon (Y_2, \Delta_2 + \Xi_2) \to \big(X_2, (B_2 + t_1 N_2) + (M_2 + t_1 P_2) \big) $$
		of the NQC lc g-pair 
		$ \big(X_2, (B_2 + t_1 N_2) + (M_2 + t_1 P_2) \big) $ such that $ f_2 = h_2 \circ g_2 $, where the g-pair $ (Y_2, \Delta_2 + \Xi_2) $ comes with data $ X_1' \overset{g_2}{\longrightarrow} Y_2 \to Z $ and $ \Xi_1' $. As above, set $ L_2 := (g_2)_* M_1' $ and $ Q_2 := (g_2)_* P_1' $ and note that 
		$$ \Xi_2 = (g_2)_* \Xi_1' = L_2 + t_1 Q_2 , \ (h_2)_* L_2 = M_2 \ \text{ and } \ (h_2)_* Q_2 = P_2 . $$
		Thus, we obtain the following diagram:
		\begin{center}
			\begin{tikzcd}[column sep = 0.3em, row sep = 2em]
				& X_1' \arrow[dl, "g_1" swap] \arrow[dr, "g_2"]\\
				(Y_1,\Delta_1+\Xi_1) \arrow[d, "h_1" swap] \arrow[rr, dashed] && (Y_2,\Delta_2+\Xi_2) \arrow[d, "h_2" swap] \\
				\big(X_1, (B_1+ t_1 N_1) + (M_1 + t_1 P_1) \big) \arrow[dr, "\theta_1" swap] \arrow[rr, dashed, "\pi_1"] 
				&& \big(X_2, (B_2+ t_1 N_2) + (M_2 + t_1 P_2) \big) \arrow[dl, "\theta_1^+"] \\
				& Z_1
			\end{tikzcd}
		\end{center}
		By Lemma \ref{lem:supplement_g}(i) this $ (K_{Y_1} + \Delta_1 + \Xi_1) $-MMP with scaling over $ Z_1 $ consists only of flips
		and it is also a sequence of $ (K_{Y_1} + \Delta_1^{<1} + \Xi_1)$-flips over $ Z_1 $, where the g-pair $ (Y_1, \Delta_1^{<1} + \Xi_1) $ is $ \Q $-factorial klt. Additionally, we will demonstrate below that Lemma \ref{lem:supplement_g}(ii) may also be applied in this setting.
		
		To this end, if $ E $ denotes the sum of the $ h_1 $-exceptional prime divisors, then by construction of a dlt blow-up we know that
		\begin{equation} \label{eq:mainthm_1}
			\Delta_1 = (h_1)_*^{-1} \left( (B_1 + t_1 N_1)^{<1} \right) + (h_1)_*^{-1} \left( (B_1 + t_1 N_1)^{=1} \right) + E . 
		\end{equation}
		We define now three effective $ \R $-divisors $ \Gamma_1 $, $ \Delta_1^{\prime} $ and $ \Delta_1^{\prime\prime} $ on $ Y_1 $ such that 
		\begin{equation}\label{eq:mainthm_2}
			\Delta_1^{<1} = \Delta_1^{\prime} + t_1 \Delta_1^{\prime\prime}
			\quad \text{ and } \quad 
			(h_1)_*^{-1} N_1 = \Gamma_1 + \Delta_1^{\prime\prime} 
		\end{equation}
		and $ \Supp \Gamma_1 \subseteq \Supp \Delta_1^{=1} $. We write $ B_1 = \sum \beta_k G_k $ and $ N_1 = \sum \nu_k G_k $, where $ \beta_k \in [0,1] $ and $ \nu_k \in [0, + \infty) $, so that 
		$ B_1 + t_1 N_1 = \sum (\beta_k + t_1 \nu_k) G_k$, where $ \beta_k + t_1 \nu_k \in [0,1] $ by construction. We set
		\begin{gather*}
			\Delta_1^{\prime} := \sum_{k: \beta_k + t_1 \nu_k < 1} \beta_k (h_1)_*^{-1} G_k \ \text{ so that } \ \Supp \big( (h_1)_* \Delta_1^{\prime} \big) \subseteq \Supp B_1 , \\[0.25em] 
			\Delta_1^{\prime\prime} := \sum_{k: \beta_k + t_1 \nu_k < 1} \nu_k (h_1)_*^{-1} G_k \ \text{ so that } \ \Supp \big( (h_1)_* \Delta_1^{\prime\prime} \big) \subseteq \Supp N_1 , \text{ and } \\[0.25em] 
			\Gamma_1 := \sum_{k: \beta_k + t_1 \nu_k = 1} \nu_k (h_1)_*^{-1} G_k  \ \text{ so that } \ \Supp \big( (h_1)_* \Gamma_1 \big) \subseteq \Supp N_1 .
		\end{gather*}
		Using \eqref{eq:mainthm_1}, it is easy to see that \eqref{eq:mainthm_2} holds and we also note that
		\[ \Supp \Gamma_1 \subseteq \Supp \Big( (h_1)_*^{-1} \big( (B_1 + t_1 N_1)^{=1} \big) \Big) \subseteq \Supp \Delta_1^{=1} . \]
		Since $ f_1 = h_1 \circ g_1 $, $ P_1 = (f_1)_* P_1' $ and $ Q_1 = (g_1)_* P_1' $, by \eqref{eq:mainthm_1} and \eqref{eq:mainthm_2} we may write
		$$ (h_1)^*(P_1 + N_1) = Q_1 + (h_1)_*^{-1} N_1 + F_1 = Q_1 + \Delta_1^{\prime\prime} + \big( \Gamma_1 + F_1 \big) , $$
		where $ F_1 $ is an $ h_1 $-exceptional $ \R $-divisor and
		$ \Supp \big( \Gamma_1 + F_1 \big) \subseteq \Supp \Delta_1^{=1} $. 
		Therefore, we may apply Lemma \ref{lem:supplement_g}(ii) and deduce that the sequence of $ (K_{Y_1} + \Delta_1 + \Xi_1) $-flips over $ Z_1 $ is also a sequence of $ (Q_1 + \Delta_1^{\prime\prime}) $-flips over $ Z_1 $.
		
		Note that, due to \eqref{eq:mainthm_ST}, we may repeat the above procedure for every NQC lc g-pair $ \big(X_i, (B_i + t_1 N_i) + (M_i + t_1 P_i) \big) $ of the sequence $ X_i \dashrightarrow X_{i+1} $ of flips over $ Z $ in order to produce a diagram as above (case $ i=1 $), starting with the dlt blow-up $ h_i \colon (Y_i, \Delta_i + \Xi_i) \to \big(X_i, (B_i + t_1 N_i) + (M_i + t_1 P_i) \big) $ obtained from the previous repetition (for $ i \geq 2 $). Specifically, each such repetition produces a sequence of $ (K_{Y_i} + \Delta_i + \Xi_i) $-flips over $ Z_i $, which is also a sequence of $ (K_{Y_i} + \Delta_i^{<1} + \Xi_i) $-flips over $ Z_i $ with respect to the $ \Q $-factorial NQC klt g-pair $ (Y_i, \Delta_i^{<1}  + \Xi_i) $ as well as a sequence of $ ( Q_i + \Delta_i^{\prime\prime} ) $-flips over $ Z_i $, where (as in the case $ i=1 $ we have) $ \Delta_i^{<1} = \Delta_i^{\prime} + t_1 \Delta_i^{\prime\prime} $ and $ \Xi_i = L_i + t_1 Q_i $ with $ L_i =  (g_i)_* M_1' $ and $ Q_i = (g_i)_* P_1' $. Moreover, note that each repetition of the procedure in question is clearly compatible with the previous one.
		
		Since we may consider every flip over $ Z_i $ as a flip over $ Z $, we obtain a sequence of 
		$ \big( K_{Y_1} + \Delta_1^{<1} + \Xi_1 = K_{Y_1} + (\Delta_1^{\prime} + t_1 \Delta_1^{\prime\prime}) + (L_1 + t_1 Q_1) \big) $-flips over $ Z $ starting from the $ \Q $-factorial NQC klt g-pair 
		$ (Y_1, \Delta_1^{<1}  + \Xi_1) = \big(Y_1, (\Delta_1^{\prime} + t_1 \Delta_1^{\prime\prime}) + (L_1 + t_1 Q_1) \big) $, 
		which is also a sequence of $ (Q_1 + \Delta_1^{\prime\prime}) $-flips over $ Z $. Note that this sequence of flips does not terminate, since by construction it is also a sequence of $ (K_{Y_1} + \Delta_1 + \Xi_1) $-flips over $ Z $ and if it terminated, then the given sequence of $ (K_{X_1} + B_1 + M_1) $-flips over $ Z $ would also terminate, a contradiction to our assumption. Additionally, by Lemma \ref{lem:singularities_g}, $ (Y_1, \Delta_1^{\prime} + L_1) $ is a $ \Q $-factorial NQC klt g-pair.
		
		\medskip
		
		\textbf{Step 3}: Let $ t_2 $ be the lc threshold of 
		$ Q_1 + \Delta_1^{\prime\prime} $ with respect to 
		$ (Y_1, \Delta_1^{\prime} + L_1) $. By the previous paragraph we deduce that $ t_2 > t_1 $ and that the sequence of 
		$ (K_{Y_1} + (\Delta_1^{\prime} + t_1 \Delta_1^{\prime\prime}) + (L_1 + t_1 Q_1)) $-flips over $ Z $ is also a sequence of 
		$ (K_{Y_1} + (\Delta_1^{\prime} + t_2 \Delta_1^{\prime\prime}) + (L_1 + t_2 Q_1)) $-flips over $ Z $ with respect to the $ \Q $-factorial NQC lc g-pair 
		$ \big(Y_1,(\Delta_1^{\prime} + t_2 \Delta_1^{\prime\prime}) + (L_1 + t_2 Q_1) \big) $. We may now apply the special termination as in Step 1 and continue as in Step 2.
		
		\medskip
		
		\textbf{Step 4}: By repeating Steps 2 and 3, we get a strictly increasing sequence $ \{t_i\}_{i=1}^{\infty} $ of lc thresholds. However, this contradicts \cite[Theorem 1.5]{BZ16}.
	\end{proof}
	
	\begin{rem}
		\label{rem:comments_on_assumptions}
		If in Theorem \ref{thm:mainthm} the given NQC g-pair $(X_1,B_1+M_1)$ is $\Q$-factorial dlt, then it suffices to assume only the special termination for NQC \emph{$\Q$-factorial dlt} g-pairs of dimension $ d $ in order to prove the statement: this is clear from the above proof.
	\end{rem}
	
	\begin{cor} \label{cor:maincor}
		Assume the existence of minimal models for smooth varieties of dimension $ d-1 $ and 
		the special termination for NQC $\Q$-factorial dlt g-pairs of dimension $ d $.
		
		Let $ (X_1,B_1+M_1) $ be an NQC lc g-pair of dimension $ d $. Assume that $ (X_1/Z,B_1+M_1) $ admits an NQC weak Zariski decomposition over $ Z $. Then any sequence of flips over $ Z $ starting from $ (X_1/Z,B_1+M_1) $ terminates.
	\end{cor}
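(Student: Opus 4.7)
The plan is to reduce to Theorem \ref{thm:mainthm} by passing to a $\Q$-factorial dlt blow-up of $(X_1,B_1+M_1)$; this bypasses the $\R$-Cartier issues that would otherwise obstruct a direct application, since on the dlt blow-up every Weil divisor is automatically $\R$-Cartier.

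First, I would take a dlt blow-up $h\colon (Y_1,\Delta_1+\Xi_1) \to (X_1,B_1+M_1)$, so that $(Y_1,\Delta_1+\Xi_1)$ is a $\Q$-factorial dlt NQC g-pair with $K_{Y_1}+\Delta_1+\Xi_1 \sim_\R h^*(K_{X_1}+B_1+M_1)$. Pulling the given NQC weak Zariski decomposition of $(X_1,B_1+M_1)$ back through $h$ via a common resolution and invoking this $\R$-linear equivalence would yield an NQC weak Zariski decomposition $(g_1)^*(K_{Y_1}+\Delta_1+\Xi_1) \equiv_Z P_1' + N$ for $(Y_1,\Delta_1+\Xi_1)$ over $Z$, where $g_1\colon Y_1' \to Y_1$ is the underlying birational morphism, $P_1'$ is NQC on $Y_1'$, and $N$ is effective $\R$-Cartier on $Y_1'$.

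Next, given a sequence of flips $(X_i,B_i+M_i) \dashrightarrow (X_{i+1},B_{i+1}+M_{i+1})$ over $Z_i$ starting from $(X_1,B_1+M_1)$, I would lift it to a sequence of flips $(Y_i,\Delta_i+\Xi_i) \dashrightarrow (Y_{i+1},\Delta_{i+1}+\Xi_{i+1})$ starting from $(Y_1,\Delta_1+\Xi_1)$, with each $(Y_i,\Delta_i+\Xi_i)$ a dlt blow-up of $(X_i,B_i+M_i)$, by an MMP-with-scaling argument based on \cite[Lemma 3.2]{LMT} (analogous to Step 2 of the proof of Theorem \ref{thm:mainthm}). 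Since each $X$-flip produces at least one non-trivial step on the $Y$-side, termination of the lifted $Y$-sequence implies termination of the $X$-sequence.

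To apply Theorem \ref{thm:mainthm} to the lifted sequence, I set $N_1 := (g_1)_*N$ on $Y_1$. By $\Q$-factoriality of $Y_1$, the sum $P_1+N_1$ is $\R$-Cartier on $Y_1$, and the same property propagates to every $Y_i$ since $\Q$-factoriality is preserved by flips of $\Q$-factorial dlt g-pairs. At each step $i$, one has $\rho(Y_i/Z_i^Y) = 1$ with $Z_i^Y$ the base of the corresponding flipping contraction, and $-(K_{Y_i}+\Delta_i+\Xi_i)$ is ample over $Z_i^Y$; combining these with the numerical equivalence inherited from the weak Zariski decomposition, I would deduce that $P_i+N_i \equiv_{Z_i^Y} c_i(K_{Y_i}+\Delta_i+\Xi_i)$ for some $c_i > 0$, so $-(P_i+N_i)$ is ample over $Z_i^Y$. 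Theorem \ref{thm:mainthm} then delivers termination of the lifted sequence, and hence of the original one.

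The hard part is the careful propagation of the numerical equivalence over each flipping base $Z_i^Y$: although the decomposition is set up once and for all on $Y_1'$ over $Z$, its compatibility with the pushforwards $(g_i)_*P_1'$ and with the strict transforms of $N_1$ at each step of the lifted MMP has to be checked. The $\Q$-factoriality of the dlt blow-ups is the essential ingredient that enables this check, since it forces all the relevant Weil divisors to be $\R$-Cartier and permits the standard flip-monotonicity arguments to be applied.
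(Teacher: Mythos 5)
Your proposal is correct and follows essentially the same route as the paper: pass to a dlt blow-up, transport the NQC weak Zariski decomposition to it (the paper cites \cite[Remark 2.11]{LT19} for this), lift the sequence of flips to the dlt models via \cite[Lemma 3.2]{LMT}, and conclude by Theorem \ref{thm:mainthm}, using $\Q$-factoriality to guarantee that $P_1+N_1$ is $\R$-Cartier. Your explicit verification that $-(P_i+N_i)$ is ample over each flipping base via $\rho=1$ is left implicit in the paper but is exactly the intended hypothesis check.
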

	
	\begin{proof}
		Let 
		\begin{center}
			\begin{tikzcd}[column sep = 0.8em, row sep = 1.75em]
				(X_1,B_1+M_1) \arrow[dr] \arrow[rr, dashed, "\pi_1"] && (X_2,B_2+M_2) \arrow[dl] \arrow[dr] \arrow[rr, dashed, "\pi_2"] && (X_3,B_3+M_3) \arrow[dl] \arrow[rr, dashed, "\pi_3"] && \dots \\
				& Z_1 && Z_2
			\end{tikzcd}
		\end{center}
		be a sequence of flips over $ Z $ starting from the g-pair $ (X_1/Z,B_1+M_1) $ which comes with data $ X_1' \overset{f_1}{\longrightarrow} X_1 \to Z $ and $ M_1' $. By Lemma \ref{lem:lift_lc_dlt} we obtain a diagram
		\begin{center}
			\begin{tikzcd}[column sep = 0.8em, row sep = large]
				(Y_1,\Delta_1+\Xi_1) \arrow[d, "h_1" swap] \arrow[rr, dashed, "\rho_1"] && (Y_2,\Delta_2+\Xi_2) \arrow[d, "h_2" swap] \arrow[rr, dashed, "\rho_2"] && (Y_3,\Delta_3+\Xi_3) \arrow[d, "h_3" swap] \arrow[rr, dashed, "\rho_3"] && \dots 
				\\ 
				(X_1,B_1+M_1) \arrow[dr] \arrow[rr, dashed, "\pi_1"] && (X_2,B_2+M_2) \arrow[dl] \arrow[dr] \arrow[rr, dashed, "\pi_2"] && (X_3,B_3+M_3) \arrow[dl] \arrow[rr, dashed, "\pi_3"] && \dots \\
				& Z_1 && Z_2
			\end{tikzcd}
		\end{center}
		where each map $ h_i \colon (Y_i, \Delta_i + \Xi_i) \to (X_i,B_i+M_i) $, $i \geq 1$, is a dlt blow-up of the NQC lc g-pair $ (X_i,B_i+M_i) $ and the top row yields an MMP over $ Z $ for the NQC $\Q$-factorial dlt g-pair $ (Y_1,\Delta_1 + \Xi_1) $ with data $X_1' \overset{g_1}{\longrightarrow} Y_1 \to Z$ and $M_1'$. Now, to prove the statement, it suffices to show that this $ (K_{Y_1}+\Delta_1+\Xi_1) $-MMP terminates. By relabelling, we may assume that it consists only of flips. Moreover, by assumption and by \cite[Remark 2.11]{LT19}, the g-pair $ (Y_1,\Delta_1 + \Xi_1) $ admits an NQC weak Zariski decomposition over $ Z $, i.e., there exists a projective birational morphism $ \psi \colon W \to Y_1 $ from a normal variety $W$ such that $ \psi^*(K_{Y_1}+\Delta_1+\Xi_1) \equiv_Z P_W + N_W $, where $ P_W $ is an NQC divisor on $ W $ and $ N_W $ is an effective $ \R $-Cartier $ \R $-divisor on $ W $. Replacing $X_1'$ with a higher model, we may additionally assume that $g_1 = \psi \circ \varphi $, where $ \varphi \colon X_1' \to W $.
		Then $ P_1' := \varphi^* P_W $ is an NQC divisor on $ X_1' $. Set $ P_1 := (g_1)_* P_1' = \psi_* P_W $ and $ N_1 := \psi_* N_W $ and note that $ P_1 + N_1 $ is $ \R $-Cartier and satisfies $ K_{Y_1}+\Delta_1+\Xi_1 \equiv_Z P_1 + N_1 $. We conclude by Theorem \ref{thm:mainthm}, taking into account Remark \ref{rem:comments_on_assumptions}.
	\end{proof}
	
	\begin{proof}[Proof of Theorem \ref{thm:mainconsequence_g}]
		By the assumptions of the theorem in lower dimensions, we may assume the special termination for NQC $\Q$-factorial dlt g-pairs of dimension $ d $ (see \cite[Theorem 5.1 and Lemma 5.2]{LMT}). We conclude by Corollary \ref{cor:maincor}. We emphasize again that \cite{LMT} does not depend on \cite{Mor18}.
	\end{proof}
	
	\begin{proof}[Proof of Theorem \ref{thm:terminationforfourfolds}]
		By combining \cite[Theorem 5-1-15]{KMM87}, \cite[Proposition 5.1]{HanLi} and \cite[Theorem E]{LT19}, we infer that the given g-pair $(X/Z,B+M)$ admits an NQC weak Zariski decomposition over $ Z $. We conclude by combining Theorems \ref{thm:terminationforthreefolds} and \ref{thm:mainconsequence_g}.
	\end{proof}

	\bibliographystyle{amsalpha}
	\providecommand{\bysame}{\leavevmode\hbox to3em{\hrulefill}\thinspace}
	\providecommand{\MR}{\relax\ifhmode\unskip\space\fi MR }
	\providecommand{\MRhref}[2]{%
		\href{http://www.ams.org/mathscinet-getitem?mr=#1}{#2}
	}
	\providecommand{\href}[2]{#2}

\end{document}